\renewcommand{\d}{\mathrm{d}}
\newcommand{\dx}{\mathrm{d}x}
\newcommand{\dt}{\mathrm{d}t}
\renewcommand{\rho}{\varrho}
\let\TeXchi\chi
\newbox\chibox
\chibox \hbox{\raise\dp0 \box 0 }
\def\chi{\copy\chibox}
\def\Xint#1{\mathchoice
    {\XXint\displaystyle\textstyle{#1}}%
    {\XXint\textstyle\scriptstyle{#1}}%
    {\XXint\scriptstyle\scriptscriptstyle{#1}}%
    {\XXint\scriptscriptstyle\scriptscriptstyle{#1}}%
    \!\int}
\def\XXint#1#2#3{\setbox0=\hbox{$#1{#2#3}{\int}$}
    \vcenter{\hbox{$#2#3$}}\kern-0.5\wd0}
\def\bint{\Xint-}
\def\dashint{\Xint{\raise4pt\hbox to7pt{\hrulefill}}}
\def\XXiint#1#2#3{\setbox0=\hbox{$#1{#2#3}{\iint}$}
    \vcenter{\hbox{$#2#3$}}\kern-0.5\wd0}
\author[N. Liao]{Naian Liao}
\address{Naian Liao,
Fachbereich Mathematik, Universit\"at Salzburg,
Hellbrunner Str. 34, 5020 Salzburg, Austria}
\email{naian.liao@sbg.ac.at}
\newtheorem{proposition}{Proposition}[section]
\newtheorem{theorem}{Theorem}[section]
\newtheorem{lemma}{Lemma}[section]
\newtheorem{remark}{Remark}[section]
\numberwithin{equation}{section}
\numberwithin{theorem}{section}
\numberwithin{proposition}{section}
\numberwithin{lemma}{section}
\numberwithin{remark}{section}
\newcommand{\noi}{\noindent}
\newcommand{\dsty}{\displaystyle}
\newcommand{\al}{\alpha}
\newcommand{\be}{\beta}
\newcommand{\gm}{\gamma}
\newcommand{\dl}{\delta}
\newcommand{\Dl}{\Delta}
\newcommand{\lm}{\lambda}
\newcommand{\varep}{\varepsilon}
\newcommand{\vp}{\varphi}
\newcommand{\sig}{\sigma}
\newcommand{\om}{\omega}
\newcommand{\z}{\zeta}
\newcommand{\nn}{\mathbb{N}}
\newcommand{\rr}{\mathbb{R}}
\newcommand{\rn}{\rr^N}
\newcommand{\bl}[1]{\mathbf{#1}}
\newcommand{\dvg}{\operatorname{div}}
\newcommand{\essup}{\operatornamewithlimits{ess\,sup}}
\newcommand{\essinf}{\operatornamewithlimits{ess\,inf}}
\newcommand{\essosc}{\operatornamewithlimits{ess\,osc}}
\newcommand{\osc}{\operatornamewithlimits{osc}}
\newcommand{\loc}{\operatorname{loc}}
\newcommand{\dist}{\operatorname{dist}}
\newcommand{\pl}{\partial}
\newcommand{\intl}{\int\limits}
\def\Xint#1{\mathchoice
    {\XXint\displaystyle\textstyle{#1}}%
    {\XXint\textstyle\scriptstyle{#1}}%
    {\XXint\scriptstyle\scriptscriptstyle{#1}}%
    {\XXint\scriptscriptstyle\scriptscriptstyle{#1}}%
    \!\int}
\def\XXint#1#2#3{\setbox0=\hbox{$#1{#2#3}{\int}$}
    \vcenter{\hbox{$#2#3$}}\kern-0.5\wd0}
\def\bint{\Xint-}
\def\dashint{\Xint{\raise4pt\hbox to7pt{\hrulefill}}}
\def\dashiint{\bint\kern-0.15cm\bint}
\newcommand{\ovl}[3]{\int_{#1}^{#2}\kern-#3pt\raise4pt\hbox to7pt{\hrulefill}\ }
\newcommand{\ovll}[3]{\intl_{#1}^{#2}\kern-#3pt\raise4pt\hbox to7pt{\hrulefill}\ }
\newcommand{\tvl}[2]{\iint_{#1}\kern-#2pt\raise4pt\hbox to7pt{\hrulefill}\ }
\newcommand{\bye}{
\begin{document}
\title[Boundary continuity for the Stefan problem]{On the logarithmic type boundary modulus of continuity for  the Stefan problem}
\date{}
\maketitle
\begin{abstract}
A logarithmic type modulus of continuity is established for weak solutions to a two-phase Stefan problem,
up to the parabolic boundary of a cylindrical space-time domain. For the Dirichlet problem, we
merely assume
that the spatial domain satisfies a measure density property, 
and the boundary datum has a logarithmic type modulus of continuity. 
For the Neumann problem, we assume that the lateral boundary is smooth, and the boundary datum is bounded.
The proofs are measure theoretical in nature, exploiting De Giorgi's iteration and refining DiBenedetto's approach.
Based on the sharp quantitative estimates, construction of continuous weak (physical) solutions is also indicated.
The logarithmic type  modulus of continuity has been conjectured to be optimal as a structural property
 for weak solutions to such partial differential equations.
\vskip.2truecm
\noindent{\bf Mathematics Subject Classification (2020):} 
Primary, 35R70; Secondary, 35A01, 35B65, 35D30, 35K65, 35R35, 80A22

\vskip.2truecm
\noindent{\bf Key Words:} 
Stefan problem, parabolic $p$-Laplacian, boundary continuity, intrinsic scaling, expansion of positivity, parabolic De Giorgi class
\end{abstract}
\tableofcontents
\section{Introduction}
The classical Stefan problem aims to describe the evolution of the moving boundary between two phases
of a material undergoing a phase change, for instance the melting of ice to water.
The temperature of the material in both phases is governed by the heat equation, subject to the usual
initial and boundary conditions. However, the interface of the two phases is free, and additional conditions need to be imposed
on that part of the boundary (an unknown hyper-surface), which can be viewed as a kind of energy balance law.
As such the classical Stefan problem is genuinely a parabolic free boundary problem, cf.~\cite{Caff-Salsa}.
The model accounts for a large class of physical phenomena, cf.~\cite{Visintin-08}. 

The Stefan problem also admits a variational perspective, cf.~\cite{Friedman-68, Kameno-61, Oleinik} 
and \cite[Chapter~V, Section~9]{LSU}. 
Under this point of view, we are led to formulate
the initial-boundary value problem for the following nonlinear parabolic equation in a fixed space-time domain
$E_T:=E\times(0,T]\subset \rr^{N+1}$, that is,
\begin{equation}\label{Eq:1:0}
\pl_t\be(u)-\Dl u\ni 0\quad\text{ weakly in }E_T.
\end{equation}
Here $\be(\cdot)$ represents the energy (more precisely, the {\it enthalpy}) of the two phases, i.e. $[u>0]$ and $[u<0]$,
and it is a maximal monotone graph that permits a jump when the temperature $u$ is zero.
The equation \eqref{Eq:1:0} is understood in the sense of differential inclusions.
No explicit reference is made to any free boundary in this formulation.
Instead, a {\it mushy region}, i.e. $[u=0]$, which consists of a mixture of the two states of the material,
 is allowed.
The solutions are sought in Sobolev spaces and thus are certain weak ones.
As a result, the question concerning the regularity of the weak solutions is naturally raised.
This is the problem we try to tackle in this note.
See   \cite[\S~1.3]{Visintin-08} for a comparison between the classical and the weak formulations.
See \cite{Kinder} for another formulation in terms of variational inequalities.

Pioneering works of Caffarelli \& Evans \cite{Caff-Evans-83}, DiBenedetto \cite{DB-82, DB-86}, 
Sacks \cite{Sacks-83}, and Ziemer \cite{Ziemer-82}, have shown that while the energy admits a jump, 
the temperature is always continuous, even across the mushy region $[u=0]$.
Among these contributions, we single out \cite{DB-82, DB-86} whose approach is closely followed in this work.
In \cite{DB-82} the interior and up to the boundary continuity of the temperature was obtained,
given general Neumann data or homogeneous Dirichlet  data on the lateral boundary.
However, the case of nonhomogeneous Dirichlet data turned out to be more involved
and was solved later in \cite{DB-86}. An explicit modulus of continuity of the temperature, both in the interior and at the boundary,
can be derived from the method used in \cite{DB-82, DB-86}. Roughly speaking, it is of the following type (cf.~\cite{DB-Friedman-84}):
\begin{equation}\tag{I}\label{Eq:modulus:1}
(0,1)\ni r\mapsto \boldsymbol\om(r)=\big(\ln|\ln (cr)| \big)^{-\sig}\quad\text{ for some }c,\,\sig>0.
\end{equation}
The method employed in \cite{DB-82, DB-86} is  flexible enough to deal with the situation when the Laplacian $\Dl$ in \eqref{Eq:1:0}
is replaced by a general quasilinear diffusion part    
and when a lower order term appears.

Subsequently, the equation \eqref{Eq:1:0} replacing the Laplacian $\Dl$ by the $p$-Laplacian $\Dl_p$ for $p\ge2$
has been studied in \cite{Urbano-14, Urbano-17, Urbano-97, Urbano-00, Urbano-08} by Urbano  et al. 
In particular, we single out the recent work \cite{Urbano-14} where the {\it interior} modulus of continuity
has been improved to be of the following type
\begin{equation}\tag{II}\label{Eq:modulus:2}
(0,1)\ni r\mapsto \boldsymbol\om(r)=|\ln (cr)|^{-\sig}\quad\text{ for some }c,\,\sig>0.
\end{equation}
Discarding a logarithm, this type of modulus of continuity represents an improvement even in the case of Laplacian,
let alone the complication brought by the $p$-Laplacian.
The  modulus \eqref{Eq:modulus:2} has been conjectured to be optimal in \cite{Urbano-14}.
(See also \cite{Caff-Fried} for the one-phase problem.)
Although a {\it boundary} modulus of continuity is derived in \cite{Urbano-17} for Dirichlet data, it is still of type \eqref{Eq:modulus:1}.

How to achieve a boundary modulus of type \eqref{Eq:modulus:2} with  Dirichlet data,
and how to deal with the same issue for Neumann data, remain elusive.
We will answer these questions in this note.
The significance of continuity for weak solutions (temperatures) stems from their physical bearings;
an explicit and sharp modulus has further mathematical implications, cf.~\cite{Caff-Fried}.
Our quantitative estimates up to the boundary allow us to construct  physical solutions
with the  type \eqref{Eq:modulus:2} modulus of continuity.
\subsection{Statement of the  results}
Our main goal is to establish a modulus of continuity of type \eqref{Eq:modulus:2}
for weak solutions to the Stefan problem, up to the parabolic boundary,
given either Dirichlet or Neumann conditions. Moreover, for the Dirichlet problem, the $p$-Laplacian
with $p\ge2$ is considered. Evidently, our results are new even for the Laplacian.

More precisely, denoting an open set in $\rr^N$ ($N\ge1$) by $E$
 and setting $E_T:=E\times(0,T]$, we are concerned with 
 the following
 nonlinear parabolic equation
\begin{equation}\label{Eq:1:1}
\pl_t\be(u)-\dvg\bl{A}(x,t,u, Du) \ni 0\quad \text{ weakly in }\> E_T.
\end{equation}
Here $\be$ is a maximal monotone graph in $\rr\times\rr$ defined by
\begin{equation*}
\be(u)=\left\{
\begin{array}{cl}
u,\quad& u>0,\\[5pt] 
\left[-\nu,0\right],\quad& u=0,\\[5pt]
u-\nu,\quad& u<0,
\end{array}
\right.
\end{equation*}
for some constant $\nu>0$ that represents the exchange of {\it latent heat}.

The function $\bl{A}(x,t,u,\xi)\colon E_T\times\rr^{N+1}\to\rn$ is assumed to be
measurable with respect to $(x, t) \in E_T$ for all $(u,\xi)\in \rr\times\rn$,
and continuous with respect to $(u,\xi)$ for a.e.~$(x,t)\in E_T$.
Moreover, we assume the structure conditions
\begin{equation}  \label{Eq:1:2}
\left\{
\begin{array}{l}
\bl{A}(x,t,u,\xi)\cdot \xi\ge C_o|\xi|^p \\[5pt]
|\bl{A}(x,t,u,\xi)|\le C_1|\xi|^{p-1}%
\end{array}%
\right .\quad \text{ a.e.}\> (x,t)\in E_T,\, \forall\,u\in\rr,\,\forall\xi\in\rn,
\end{equation}
where $C_o$ and $C_1$ are given positive constants, and we take $p\ge2$.
The set of parameters $\{\nu, p,N,C_o,C_1\}$ will be referred to as the (structural) data in the sequel.

Before considering the initial-boundary value problems for \eqref{Eq:1:1},
let us recall the parabolic $p$-Laplace type equation, which is pertinent to \eqref{Eq:1:1}:
\begin{equation}\label{Eq:p-Laplace}
u_t-\dvg\bl{A}(x,t,u, Du) = 0\quad \text{ weakly in }\> E_T,
\end{equation}
where the properties of $\bl{A}(x,t,u, \xi)$ are retained.

The Dirichlet problem  for \eqref{Eq:1:1} is formulated as:
\begin{equation}\label{Dirichlet}
\left\{
\begin{aligned}
&\pl_t\be(u)-\dvg\bl{A}(x,t,u, Du) \ni 0\quad \text{ weakly in }\> E_T\\
&u(\cdot,t)\Big|_{\partial E}=g(\cdot,t)\quad \text{ a.e. }\ t\in(0,T]\\
&u(\cdot,0)=u_o.
\end{aligned}
\right.
\end{equation}
The boundary datum  satisfies
\begin{equation}\tag{D}\label{D}
\left\{
\begin{aligned}
&g\in L^p\big(0,T;W^{1,p}( E)\big),\\ 
&g \text{ continuous on}\ \overline{E}_T\ \text{with modulus of continuity }\boldsymbol\om_g(\cdot).
\end{aligned}
\right.
\end{equation}
In addition, the initial datum satisfies
\begin{equation}\tag{U$_o$}\label{I}
\mbox{ $u_o$ is continuous in $\overline{E}$ with modulus of continuity $\boldsymbol\om_{o}(\cdot)$.}
\end{equation}
Regarding the geometry of the boundary $\pl E$, we assume a measure density condition, that is,
\begin{equation}\tag{G}\label{geometry}
\left\{\;\;
	\begin{minipage}[c][1.5cm]{0.7\textwidth}
	there exist $\al_*\in(0,1)$ and $\bar\rho\in(0,1)$, such that for all $x_o\in\pl E$,
	for every cube $K_\rho(x_o)$ and $0<\rho\le\bar\rho$, there holds
	$$
	|E\cap K_{\rho}(x_o)|\le(1-\al_*)|K_\rho|.
	$$
	\end{minipage}
\right.
\end{equation}
Here we have denoted by $K_\rho(x_o)$ the cube of side length $2\rho$
and center $x_o$, with faces parallel with the coordinate planes of $\rr^N$.
Intuitively, the condition \eqref{geometry} means that one can place a cone
at $x_o$ exterior to $E$, with an angle quantified by $\al_*$.

Throughout the rest of this note, 
we will use the symbols 
\begin{equation*}
\left\{
\begin{aligned}
Q_\rho(\theta)&:=K_{\rho}(x_o)\times(t_o-\theta\rho^p,t_o),\\[5pt]
Q_{R,S}&:=K_R(x_o)\times (t_o-S,t_o),
\end{aligned}\right.
\end{equation*} 
to denote (backward) cylinders with the indicated positive parameters; 
we omit the vertex $(x_o,t_o)$
from the notations for simplicity.
We will also denote the lateral boundary by $S_T:=\pl E\times(0,T]$
and the parabolic boundary by $\pl_{\mathcal{P}}E_T:=S_T\cup [\overline{E}\times\{0\}]$.

The formal definition of solution to the Dirichlet problem \eqref{Dirichlet} will be postponed to Section~\ref{S:1:2}.
Now we first present the regularity theorem concerning the Dirichlet problem \eqref{Dirichlet}
up to the parabolic boundary $\pl_{\mathcal{P}}E_T$.
\begin{theorem}\label{Thm:1:1}
Let $u$ be a bounded weak solution to the Dirichlet problem 
\eqref{Dirichlet} under the condition \eqref{Eq:1:2} with $p\ge2$. Assume that \eqref{D}, \eqref{I} and \eqref{geometry} hold. 
Then $u$ is continuous in 
$\overline{ E_T}$.
More precisely, there is a modulus of continuity $\boldsymbol\om(\cdot)$,
determined by the data, $\al_*$, $\bar\rho$, $\|u\|_{\infty,E_T}$, $\boldsymbol\om_{o}(\cdot)$ and $\boldsymbol\om_{g}(\cdot)$, such that
	\begin{equation*}
	\big|u(x_1,t_1)-u(x_2,t_2)\big|
	\le
	\boldsymbol\om\!\left(|x_1-x_2|+|t_1-t_2|^{\frac1p}\right),
	\end{equation*}
for every pair of points $(x_1,t_1), (x_2,t_2)\in \overline{E_T}$.
In particular, there exists $\sig\in(0,1)$ depending only on the data and $\al_*$,
such that if  
\[
\boldsymbol\om_g(r)\le \frac{C_g}{|\ln r|^{\lm}}\quad\text{ and }\quad\boldsymbol \om_o(r)\le \frac{C_{u_o}}{|\ln r|^{\lm}}\quad\text{ for all }r\in(0,\bar\rho),
\]
where $C_g,\,C_{u_o}>0$ and $\lm>\sig$,
then the modulus of continuity is
$$\boldsymbol\om(r)=C \Big(\ln \frac{\bar\rho}{r}\Big)^{-\frac{\sig}2}\quad\text{ for all }r\in(0,  \bar\rho)$$ with some $C>0$
 depending on the data, $\lm$, $\al_*$,   $\|u\|_{\infty,E_T}$, $C_{u_o}$ and $C_{g}$.
\end{theorem}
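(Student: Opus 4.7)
The plan is to establish the boundary modulus of continuity by iterating a quantitative oscillation reduction on a shrinking sequence of intrinsically scaled backward cylinders anchored at a boundary point, refining the scheme of DiBenedetto \cite{DB-82, DB-86} so that the oscillation decays \emph{polynomially} rather than merely geometrically per step. This sharpening is what upgrades the boundary modulus from type \eqref{Eq:modulus:1} to the sharp type \eqref{Eq:modulus:2}, mirroring on the boundary the interior improvement of \cite{Urbano-14}.

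Fix $(x_o,t_o)\in S_T$; the initial boundary is handled analogously via the modulus $\boldsymbol\om_o$. Consider dyadic scales $\rho_n=\bar\rho\,\lambda^n$ with $\lambda\in(0,1)$ to be chosen, and construct a decreasing sequence of oscillation bounds $\omega_n\ge\essosc_{Q_n} u$ on intrinsic cylinders $Q_n=K_{\rho_n}(x_o)\times(t_o-\theta_n\rho_n^p,t_o)$ with $\theta_n\approx \omega_n^{2-p}$ absorbing the $p$-Laplacian scaling. The induction seeks a recursion of the form $\omega_{n+1}\le \omega_n(1-c\,\omega_n^q)$, equivalently $\omega_{n+1}^{-q}\ge \omega_n^{-q}+c$, with $c,q>0$ depending only on the data and $\al_*$; telescoping gives $\omega_n\lesssim n^{-1/q}$, and since $n\sim \ln(\bar\rho/\rho_n)$ this is exactly a modulus of the form $C(\ln(\bar\rho/r))^{-\sigma/2}$ after identifying $\sigma/2=1/q$.

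The inductive step is carried out through a DiBenedetto-type alternative on $Q_n$. In the first branch, when $|\{u>\mu_+-\omega_n/2\}\cap Q_n|$ is small relative to $|Q_n|$, a De Giorgi iteration applied to truncations of $\be(u)$, combined with the standard energy inequality for \eqref{Eq:1:1}, drives the supremum of $u$ down by a controlled fraction of $\omega_n$. In the opposite branch the measure density condition \eqref{geometry} is decisive: it forces $|E^c\cap K_{\rho_n}(x_o)|\ge \al_*|K_{\rho_n}|$, and since $u\equiv g$ on $S_T$ with $|g-g(x_o,t_o)|\le \boldsymbol\om_g(\rho_n)\ll \omega_n$ by the hypothesis $\lm>\sig$, one concludes that $\{u\le g(x_o,t_o)+\boldsymbol\om_g(\rho_n)\}$ occupies a definite measure fraction of $Q_n$. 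Expansion of positivity then transports this measure information to force the supremum to drop in a slightly smaller cylinder; the infimum alternative is symmetric. The Stefan jump of $\be$ at $u=0$ is treated in both branches through the refined logarithmic estimates of \cite{DB-86}, adapted to the intrinsic scale $\theta_n$.

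The main technical obstacle is quantitative bookkeeping: all constants produced by the De Giorgi iteration, the expansion of positivity, and the correction handling the Stefan jump must be independent of $\omega_n$. A naive implementation absorbs powers of $\omega_n$ into these constants and produces only the double logarithm \eqref{Eq:modulus:1}; the sharp type \eqref{Eq:modulus:2} requires careful tracking of how the intrinsic factor $\theta_n\approx \omega_n^{2-p}$ interacts with the logarithmic estimates handling the discontinuity of $\be$, and of how the lateral measure density interacts with the parabolic time length $\theta_n\rho_n^p$. Once the recursion closes at the boundary, Theorem \ref{Thm:1:1} follows by patching the boundary estimate with the interior modulus from \cite{Urbano-14} via a standard covering argument, with the dependencies on $C_g$, $C_{u_o}$, $\|u\|_{\infty,E_T}$, $\al_*$ and $\bar\rho$ tracked through the choice of the initial $\omega_0$.
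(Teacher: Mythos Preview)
Your scheme of geometric radii $\rho_n=\bar\rho\lambda^n$ together with intrinsic time lengths $\theta_n\approx\omega_n^{2-p}$ cannot close the iteration when $p>2$. The inductive step needs $Q_{n+1}\subset Q_n$, in particular $\theta_{n+1}\rho_{n+1}^p\le\theta_n\rho_n^p$, which reads $\lambda^p\le(\omega_{n+1}/\omega_n)^{p-2}$; since your own recursion gives $\omega_{n+1}/\omega_n=1-c\,\omega_n^q\to1$, no fixed $\lambda<1$ survives once $n$ is large. Worse, the analysis near the jump of $\beta$ requires a second, \emph{longer} time scale $\widetilde\theta_n=(\delta\xi\omega_n)^{1-p}$ with $\delta\approx\omega_n^{q}$, and fitting the cylinder of height $\widetilde\theta_n\rho_{n+1}^p$ inside the previous one forces the radii to obey $\rho_{n+1}^p\approx\widetilde\xi\,\omega_n^{\bar q}\rho_n^p$, a genuinely sub-geometric decay. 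Unwinding this gives $n^2\approx\ln(\bar\rho/r)$ rather than $n\approx\ln(\bar\rho/r)$, and \emph{that} square root is the source of the exponent $\tfrac{\sigma}{2}$ in the statement. Your line ``identifying $\sigma/2=1/q$'' is not derived from anything in your argument; with truly geometric radii and $\omega_n\lesssim n^{-1/q}$ you would obtain exponent $1/q$, not $1/(2q)$, so the match with the theorem is cosmetic.

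A second gap is the boundary mechanism itself. You invoke expansion of positivity to ``transport measure information forward in time,'' but at the lateral boundary the density condition \eqref{geometry} already furnishes, at \emph{every} time slice, the measure bound $|[u(\cdot,t)>\mu^-+\xi\omega]\cap K_\rho|\ge\alpha_*|K_\rho|$, because $u$ coincides with $g$ outside $E$ and the level obeys \eqref{Eq:k-restriction}; no temporal propagation is used or needed. What is genuinely delicate---and missing from your outline---is the sub-case $|\mu^-|\lesssim\delta\xi\omega$, where the truncation level sits near the jump of $\beta$ so that neither Proposition~\ref{Prop:2:2} nor a one-shot De~Giorgi lemma applies. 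There one combines the \emph{failure} of two separate De~Giorgi conditions to produce a lower bound on $\iint\nu\chi_{[u\le0]}$, feeds this into the singular term of the energy estimate of Proposition~\ref{Prop:2:3} on the longer scale $\widetilde\theta$, and then runs an iterated measure-shrinking argument (Lemma~\ref{Lm:shrink:1}) that exploits \eqref{geometry} slice-wise. This two-scale alternative, not an expansion-of-positivity step, is what yields the polynomial reduction $\omega\mapsto(1-\eta\omega^q)\omega$ at the boundary.
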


Now we consider the Neumann problem. In order to deal with possible variational data on $S_T$,
we assume $\pl E$ is of class $C^1$, 
such that the outward unit normal, which we denote by {\bf n},
is defined on $\pl E$ pointwise.
Let us consider the initial-boundary value problem of Neumann type:
\begin{equation}\label{Neumann}
\left\{
\begin{aligned}
&\pl_t\be(u)-\dvg\bl{A}(x,t,u, Du) \ni 0\quad \text{ weakly in }\> E_T\\
&\bl{A}(x,t,u, Du)\cdot {\bf n}=\psi(x,t, u)\quad \text{ a.e. }\ \text{ on }S_T\\
&u(\cdot,0)=u_o(\cdot),
\end{aligned}
\right.
\end{equation}
where the structure conditions \eqref{Eq:1:2} and the initial condition \eqref{I} are retained. 
On the Neumann datum $\psi$ we assume for simplicity that, for some absolute constant $C_2$,
there holds
\begin{equation}\label{N-data}\tag{\bf{N}}
|\psi(x,t, u)|\le C_2\quad \text{ for a.e. }(x,t, u)\in S_T\times\rr.
\end{equation}
More general conditions should also work (cf.~Section~2, Chapter~II, \cite{DB}).
The formal definition of weak solution to \eqref{Neumann} will be given in Section~\ref{S:1:2}.
Now we are ready to present the results concerning regularity of solutions to the Neumann problem
\eqref{Neumann} up to the parabolic boundary $\pl_{\mathcal{P}}E_T$.

\begin{theorem}\label{Thm:1:2}
Let $u$ be a bounded weak solution to the Neumann problem 
\eqref{Neumann} under the condition \eqref{Eq:1:2} with $p=2$. 
Assume that $\pl E$ is of class $C^1$, and \eqref{N-data} and \eqref{I} hold. 
 Then $u$ is continuous in $\overline{E_T}$.
 More precisely, there is a modulus of continuity $\boldsymbol\om(\cdot)$,
determined by the data, the structure of $\pl E$, $C_2$, $\|u\|_{\infty,E_T}$  and $\boldsymbol\om_{o}(\cdot)$, such that
	\begin{equation*}
	\big|u(x_1,t_1)-u(x_2,t_2)\big|
	\le
	\boldsymbol\om\!\left(|x_1-x_2|+|t_1-t_2|^{\frac12}\right),
	\end{equation*}
for every pair of points $(x_1,t_1), (x_2,t_2)\in \overline{E_T}$.
In particular, there exists $\sig\in(0,1)$ depending only on the data, $C_2$ and the structure of $\pl E$,
such that if 
\[
 \boldsymbol\om_o(r)\le \frac{C_{u_o}}{|\ln r|^{\lm}}\quad\text{ for all }r\in(0,1),
\]
where $C_{u_o}>0$ and $\lm>\sig$,
then the modulus of continuity is 
$$\boldsymbol\om(r)=C \Big(\ln \frac{1}{r}\Big)^{-\frac{\sig}2}\quad\text{ for all }r\in(0, 1),$$ with some $C>0$
 depending on the data, the structure of $\pl E$, $C_2$, $\|u\|_{\infty,E_T}$ and $C_{u_o}$.
\end{theorem}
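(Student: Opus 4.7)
The strategy is to perform a DiBenedetto-type intrinsic scaling oscillation decay directly at the lateral boundary, refined in the spirit of the interior single-logarithm argument that underlies Theorem~\ref{Thm:1:1}. Since $\pl E\in C^1$, fix $x_o\in\pl E$ and straighten $\pl E$ in a neighborhood by a $C^1$ change of variables; the transformed equation still satisfies \eqref{Eq:1:2} with $p=2$ (with structural constants depending on the $C^1$ norm of the diffeomorphism), and the Neumann condition \eqref{N-data} on the flattened portion of boundary remains bounded by an absolute constant. Working in half-cylinders $Q_\rho^+:=(K_\rho(x_o)\cap\{x_N>0\})\times(t_o-\theta\rho^2,t_o)$ that abut the flattened piece of $S_T$, the goal is to show that if $\essosc_{Q_\rho^+}u\le\omega$ then $\essosc_{Q_{\lm\rho}^+}u\le\eta\omega$ for fixed $\lm,\eta\in(0,1)$, with $\theta$ chosen intrinsically in terms of $\omega$ and the latent heat $\nu$; iterating this produces the claimed modulus exactly as in the interior theorem. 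Near the initial surface $\overline E\times\{0\}$, one combines the initial modulus $\boldsymbol\om_o$ with a barrier-type energy comparison in cylinders touching $t=0$.

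\textbf{Key steps.} (i) Establish boundary Caccioppoli and logarithmic estimates in $Q_\rho^+$ by testing the equation with $(u-k)_\pm\zeta^2$ and $\big[\ln(H_k^\pm/(H_k^\pm-(u-k)_\pm+c))\big]^2\zeta^2$, truncation levels $k$ chosen to avoid the jump of $\be$ so that the time derivative can be integrated as in \cite{DB-82, DB-86}. The Neumann term contributes an integral on the flattened boundary controlled by $C_2\iint|(u-k)_\pm|\zeta^2\,\d\sig\,\dt$, which a standard trace inequality absorbs into the $Du$ term modulo a perturbation that is small on cylinders of small radius. (ii) Run DiBenedetto's alternative: either $(u-\mu_-)\le\tfrac{\omega}{2}$ on a large measure portion of a subcylinder of intrinsic size, in which case De Giorgi iteration with levels avoiding the jump of $\be$ forces a pointwise reduction; or the symmetric situation holds, in which case the logarithmic estimate and an expansion of positivity propagate the positivity set forward in intrinsic time. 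Both alternatives respect the presence of the boundary through the trace absorption in (i). (iii) Combining the two alternatives with the expansion of positivity, adapted so that positivity propagates without exiting $\overline{E_T}$, produces the oscillation reduction with explicit $\eta$; refining the choice of intrinsic scaling across iterations, as in \cite{Urbano-14}, avoids the extra logarithm and yields the $|\ln r|^{-\sig/2}$ rate. (iv) Finally, glue the boundary estimate with the interior estimate and the initial-time estimate by a standard covering argument.

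\textbf{Main obstacle.} The principal difficulty is preserving the sharper single-logarithmic rate \emph{at the lateral boundary}. In the interior, the refined iteration relies on the scale invariance of the Caccioppoli estimate under intrinsic rescaling; at the Neumann boundary the non-homogeneous trace term breaks exact scale invariance, and if handled naively the absorption of this term forces a worse oscillation reduction at each iteration, degrading the modulus back to the double-logarithmic type \eqref{Eq:modulus:1}. Overcoming this requires showing that the flux-generated perturbation in the energy inequality decays by a positive power of $\rho$ uniformly through the iteration, so that a one-time absorption suffices rather than an absorption at every step. A related issue is that, under intrinsic scaling $\theta\sim \omega^{-1}$ forced by the Stefan structure, the time length of the cylinder grows as $\omega\to0$; the boundary trace term must be re-estimated in this intrinsic geometry, which is the technically delicate part of the argument and the place where the restriction $p=2$ is used in an essential way.
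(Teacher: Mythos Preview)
Your outline follows the classical DiBenedetto route of \cite{DB-82,DB-86} (Caccioppoli plus logarithmic estimates, then the two alternatives), and that approach is precisely the one that produces the \emph{double}-logarithmic modulus \eqref{Eq:modulus:1}. The sentence ``refining the choice of intrinsic scaling across iterations, as in \cite{Urbano-14}, avoids the extra logarithm'' does not carry an actual mechanism: \cite{Urbano-14} does not refine the logarithmic-estimate argument but replaces it with a weak Harnack inequality, and the paper replaces that in turn by an \emph{expansion of positivity with polynomial dependence on the measure fraction} (Lemma~\ref{Lm:expansion}). The decisive quantitative point is that from $|[\mu^+-u(\cdot,t_*)>\tfrac14\om]\cap K_\rho\cap E|\ge\al|K_\rho\cap E|$ one obtains $\mu^+-u\ge\eta\al^{\kappa}\om$ rather than $\mu^+-u\ge 2^{-1/\al^q}\om$; it is this power dependence that turns the recursion $\om_{n+1}=(1-\eta\om_n^q)\om_n$ into a single-logarithmic decay. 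Your logarithmic-estimate step does not deliver this, and nothing in the proposal explains what would replace it.

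You also misidentify both the main obstacle and the reason for $p=2$. The Neumann trace term is absorbed once into the energy estimate via a trace inequality (Proposition~\ref{Prop:energy:Neumann}) and thereafter only generates the harmless alternative $\om\le\gm C_2\rho$, a power of $\rho$ that costs nothing in the final modulus; it does not degrade the iteration. The genuine difficulty is that at the Neumann boundary one \emph{cannot} argue, as in the interior via Lemma~\ref{Lm:sub-solution}, that $(u-k)_+$ is a sub-solution to a parabolic $p$-Laplace type equation to which the PDE-level expansion of positivity of \cite{DBGV-mono} applies. One only has the energy inequalities of Proposition~\ref{Prop:energy:Neumann}, i.e.\ membership in a parabolic De~Giorgi class, and must invoke an expansion of positivity \emph{for such classes} with the required polynomial dependence on $\al$. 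Such a result is available for $p=2$ (cf.~\cite{Liao}) but open for $p>2$; this is the content of Remark~\ref{Rmk:6:2} and the real reason for the restriction, not the scaling of the trace term.
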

\begin{remark}\upshape
The type \eqref{Eq:modulus:2} boundary modulus of continuity for the Neumann problem \eqref{Neumann}
 has been left open in Theorem~\ref{Thm:1:2} for the case $p\neq 2$.  
This is mainly because of our limited knowledge on the {\it parabolic De Giorgi class} modeled on 
the parabolic $p$-Laplacian \eqref{Eq:p-Laplace}; see further Remark~\ref{Rmk:6:2}. Nevertheless, when $p>2$, we do have
a type \eqref{Eq:modulus:1} boundary modulus for the Neumann problem \eqref{Neumann} with a general $\psi$ as in \eqref{N-data}.
This can be achieved by adapting the proof in \cite{Urbano-00} for the interior regularity.
\end{remark}
\begin{remark}\upshape
We have stated Theorems~\ref{Thm:1:1} -- \ref{Thm:1:2} in a global fashion. However their proofs are entirely local,
and they could be stated near a distinguished part of $\pl_{\mathcal{P}}E_T$.
\end{remark}
\begin{remark}\upshape
 Theorems~\ref{Thm:1:1} -- \ref{Thm:1:2} continue to hold for \eqref{Eq:1:1} 
 with lower order terms: 
 the convection resulting from the heat transfer is reflected by a lower order term.
 Also the function $\be(u)$ could present more general forms
 and reflect thermal properties of the material that may slightly change according to the temperature, 
 as long as it permits only one jump. (For the case of multiple jumps, see \cite{DBV-95}.) 
 The modifications of the proofs can be modeled on the arguments in \cite{DB,DB-82,DB-86}.
 However, we will not pursue generality in this direction, instead focus will be made on actual novelties.
\end{remark}
\subsection{Definitions of solution}\label{S:1:2}
The notion of local, weak solution to the parabolic $p$-Laplace type equation \eqref{Eq:p-Laplace},
and the notions for its initial--boundary value problems
can be found in \cite[Chapter~II]{DB}. In particular, weak solutions to \eqref{Eq:p-Laplace}
are defined in the function space
\begin{equation}   \label{Eq:func-space-1}
	u\in C\big(0,T;L^2(E)\big)\cap L^p\big(0,T; W^{1,p}(E)\big),
\end{equation}
which do not assume any knowledge on the time derivative.

In contrast to \eqref{Eq:func-space-1}, a special character for the notion of solution  to the Stefan problem \eqref{Eq:1:1}
is that we  require solutions to possess
a time derivative in the Sobolev sense. This is necessary to justify the calculations in the proofs of
the theorems. On the other hand, we will explain in Section~\ref{S:approx} 
how to construct  continuous weak solutions to \eqref{Eq:1:1}
without any knowledge on the time derivative.
\subsubsection{Notion of Local Solution}\label{S:1:2:1}
A function
\begin{equation*}
	u\in W_{\loc}^{1,2}\big(0,T;L^2_{\loc}(E)\big)\cap L^p_{\loc}\big(0,T; W^{1,p}_{\loc}(E)\big)
\end{equation*}
is a local, weak sub(super)-solution to \eqref{Eq:1:1} with the structure
conditions \eqref{Eq:1:2}, if for every compact set $K\subset E$ and every sub-interval
$[t_1,t_2]\subset (0,T]$, there is a selection $v\subset\be(u)$, i.e.
\[
\left\{\big(z,v(z)\big): z\in E_T\right\}\subset \left\{\big(z,\be[u(z)]\big): z\in E_T\right\},
\]
 such that
\begin{equation*}
	\int_K v\z \,\dx\bigg|_{t_1}^{t_2}
	+
	\iint_{K\times(t_1,t_2)} \big[-v\pl_t\z+\bl{A}(x,t,u,Du)\cdot D\z\big]\dx\dt
	\le(\ge)0
\end{equation*}
for all non-negative test functions
\begin{equation}\label{Eq:function-space}
\z\in W^{1,2}_{\loc}\big(0,T;L^2(K)\big)\cap L^p_{\loc}\big(0,T;W_o^{1,p}(K)%
\big).
\end{equation}
Observe that $v\in L^{\infty}_{\loc} \big(0,T;L^2_{\loc}(E)\big)$ and hence  all the integrals 
are well-defined. Note that the above integral formulation itself does not involve
the time derivative of $u$.
On the other hand, if we use the time derivative of $u$, the integral formulation may be written as
\begin{equation}\label{Eq:int-form}
\begin{aligned}
-\int_K&\nu(x,t)\chi_{[u\le0]}\z\,\dx\bigg|_{t_1}^{t_2}+\iint_{K\times(t_1,t_2)}\nu(x,t)\chi_{[u\le0]}\pl_t\z\,\dx\dt\\
&+\iint_{K\times(t_1,t_2)} \big[\pl_t u\z+\bl{A}(x,t,u,Du)\cdot D\z\big]\dx\dt
	\le(\ge)0,
\end{aligned}
\end{equation}
where $\z$ is as in \eqref{Eq:function-space} and $\nu(x,t)\ge0$ is given by
\begin{equation*}
\nu(x,t):=\left\{
\begin{array}{cl}
\nu,&\quad (x,t)\in [u<0],\\[5pt]
-v(x,t),&\quad (x,t)\in [u=0],
\end{array}\right.
\end{equation*}
and $\chi$ is the characteristic function of the indicated set.

A function $u$ that is both a local weak sub-solution and a local weak super-solution
to \eqref{Eq:1:1} 
is a local weak solution.


\subsubsection{Notion of Solution to the Dirichlet Problem}\label{S:1:4:3}
A function
\begin{equation*}  
	u\in W^{1,2}\big(0,T;L^2(E)\big)\cap L^p\big(0,T; W^{1,p}(E)\big)
\end{equation*}
is a weak sub(super)-solution to \eqref{Dirichlet}, 
if there is a selection $v\subset\be(u)$, for every sub-interval
$[t_1,t_2]\subset (0,T]$,
 such that
\begin{equation*} 
\begin{aligned}
	\int_{E} v\z \,\dx\bigg|_{t_1}^{t_2}
	&+
	\iint_{E\times(t_1,t_2)} \big[-v\pl_t\z+\bl{A}(x,t,u,Du)\cdot D\z\big]\dx\dt
	\le(\ge)0
\end{aligned}
\end{equation*}
for all non-negative test functions $\z$ satisfying \eqref{Eq:function-space}.
An equivalent form can be given as in \eqref{Eq:int-form} involving $\pl_t u$.
Clearly, now the compact set $K$ in \eqref{Eq:function-space} could touch the boundary $\pl E$, i.e. $K\subset \overline{E}$.
Moreover, 
the initial datum is taken in the sense that there exists a selection $v_o\subset\be(u_o)$
and for any compact set $K\subset\overline{E}$,
\[
\int_{K\times\{t\}}(v-v_o)^{2}_{\pm}\,\dx\to0\quad\text{ as }t\to0.
\]
The Dirichlet datum $g$ is attained under $u\le(\ge)g$
on $\pl E$ in the sense that the traces of $(u-g)_{\pm}$
vanish as functions in $W^{1,p}(E)$ for a.e. $t\in(0,T]$, i.e. $(u-g)_{\pm}\in L^p(0,T; W^{1,p}_o(E))$.
Notice that no {\it a priori} information is needed on the smoothness of $\pl E$ for the time being.

A function $u$ that is both a weak sub-solution and a weak super-solution
to \eqref{Dirichlet} is a weak solution.
\subsubsection{Notion of Solution to the Neumann Problem}\label{S:1:4:4}
A function
\begin{equation*}  
	u\in W^{1,2}\big(0,T;L^2(E)\big)\cap L^p\big(0,T; W^{1,p}(E)\big)
\end{equation*}
is a weak sub(super)-solution to \eqref{Neumann}, 
if there is a selection $v\subset\be(u)$, for every compact set $K\subset \rr^N$ and every sub-interval
$[t_1,t_2]\subset (0,T]$,
 such that
\begin{equation*}  
\begin{aligned}
	\int_{K\cap E} v\z \,\dx\bigg|_{t_1}^{t_2}
	&+
	\iint_{\{K\cap E\}\times(t_1,t_2)} \big[-v\pl_t\z+\bl{A}(x,t,u,Du)\cdot D\z\big]\dx\dt\\
	&\le(\ge)\iint_{\{K\cap\pl E\}\times(t_1,t_2)}\psi(x,t,u)\z\,\d\sig\dt
\end{aligned}
\end{equation*}
for all non-negative test functions $\z$ satisfying \eqref{Eq:function-space},
where $K$ is an arbitrary compact set in $\rn$ now,
and $\d\sig$ denotes the surface measure on $\pl E$.
The Neumann datum $\psi$ is reflected in the boundary integral on the right-hand side.
Moreover, the initial datum is taken 
as in the Dirichlet problem.
An equivalent form can be given as in \eqref{Eq:int-form} involving $\pl_t u$.

A function $u$ that is both a weak sub-solution and a weak super-solution
to \eqref{Neumann} is a weak solution.
\subsection{DiBenedetto's approach: revisit and refinement}
Any known approach to the local continuity issue for the equation \eqref{Eq:1:0} relies on, in one way or another,
De Giorgi's method. 
Since there are a lot of technicalities
to follow that might obscure the main ideas behind them, we briefly review the original approach
of DiBenedetto in \cite{DB-82, DB-86} for $p=2$, and highlight the main improvement to the method in order to achieve a type \eqref{Eq:modulus:2} modulus of continuity.

For a cylinder $Q_\rho=K_{\rho}(x_o)\times(t_o-\rho^2,t_o)\subset E_T$, we introduce the numbers $\mu^{\pm}$ and $\om$ satisfying
\begin{equation*}
	\mu^+=\essup_{Q_{\rho}} u,
	\quad 
	\mu^-=\essinf_{Q_{\rho}} u,
	\quad
	\om=\essosc_{Q_{\rho}} u =\mu^+ - \mu^-.
\end{equation*}
The goal is to reduce the oscillation of $u$ over a cylinder smaller than $Q_\rho$ with the same vertex $(x_o,t_o)$.
To this end, we first observe that one of following must hold:
\begin{equation}\label{Eq:mu-pm-0}
\mu^+-\tfrac14\om\ge  \tfrac14\om\quad\text{ or }\quad\mu^-+\tfrac14\om\le -\tfrac14\om.
\end{equation}
Let us suppose the first one, i.e. \eqref{Eq:mu-pm-0}$_1$, holds as the other case is similar.

The first step lies in showing a De Giorgi type lemma, which asserts that there exists a positive constant   $c_o$
depending only on the structural data, such that if
\begin{equation}\label{Eq:DG:measure}
|[u\le\mu^-+\tfrac14\om]\cap Q_{\rho} |\le \al |Q_{\rho}|,\quad\text{ where }\al=c_o\om^{\frac{N+2}2},
\end{equation}
then
\[
u\ge \mu^-+\tfrac18\om\quad\text{ a.e. in }Q_{\frac12\rho},
\]
which in turn yields a reduction of oscillation
\[
\essosc_{Q_{\frac12\rho}}u\le\tfrac78\om.
\]
The singularity of $\be(\cdot)$ at $[u=0]$ is reflected by the dependence on $\om$ of $\al$ in \eqref{Eq:DG:measure}.

The second step is to consider the case when \eqref{Eq:DG:measure} does not hold.
Since $\mu^+-\frac14\om\ge \mu^-+\frac14\om$ always holds, the reverse of the measure information \eqref{Eq:DG:measure} implies that
\[
|[\mu^+-u\ge\tfrac14\om]\cap Q_{\rho} |> \al |Q_{\rho}|.
\]
Based on this, it is not hard to check that there exists $t_*\in[t_o-\rho^2, t_o-\tfrac12\al\rho^2]$, such that
\begin{equation}\label{Eq:DG:measure:1}
|[\mu^+-u(\cdot, t_*)\ge\tfrac14\om]\cap K_{\rho}(x_o) |> \tfrac12\al |K_{\rho}|.
\end{equation}

The key to proceeding now is to observe that because of \eqref{Eq:mu-pm-0}$_1$, the function $v$ defined by
$$v:=\tfrac14\om-(u-k)_+\quad\text{ with }k=\mu^+-\tfrac14\om>0,$$
is actually a non-negative, weak super-solution to the   parabolic equation \eqref{Eq:p-Laplace}
with $p=2$ in $Q_\rho$,
as the set $[u>k]$ excludes potential singularity of $\be(u)$. The information in \eqref{Eq:DG:measure:1}
can be rephrased in terms of $v$:
\[
|[v(\cdot, t_*)\ge\tfrac14\om]\cap K_{\rho}(x_o) |> \tfrac12\al |K_{\rho}|.
\]
As such the machinery of De Giorgi
can be applied to $v$ like in \cite[Chapter~II, Section~7]{LSU} 
and the  measure information \eqref{Eq:DG:measure:1} translates into pointwise positivity for $v$
(thus also for $\mu^+-u$) up to the top of the cylinder $Q_{\rho}$. 
This kind of property for non-negative, weak super-solutions is called {\it expansion of positivity}.
More precisely, there holds
\[
\mu^+-u\ge\eta(\al)\om\quad\text{ a.e. in }Q_{\frac12\rho}(\al),
\]
which gives the reduction of oscillation
\[
\essosc_{Q_{\frac12\rho}(\al)}u\le\big(1-\eta(\al)\big)\om.
\]
The singularity of $\be(\cdot)$, reflected in the dependence of $\al$ on $\om$,
has now been passed from the measure information \eqref{Eq:DG:measure:1} to
this reduction of oscillation, through the dependence of $\eta$ on $\al$ and hence on $\om$.
Such dependence of $\eta$ on $\al$ is traced by $\eta\approx2^{-\frac{1}{\al^q}}$,
for a generic positive constant $q$ determined by the data; 
see also \cite[Chapter~4, Section~2]{DBGV-mono}.
Apart from technical complications, these are the main steps in \cite{DB-82}.

Strikingly, the dependence of $\eta$ on $\al$ can be ameliorated in the sense that $\eta\approx \al^{q}$, 
for a generic positive constant $q$ determined by the data.
This was first discovered by DiBenedetto \& Trudinger in \cite{DT}
for the elliptic De Giorgi class via a Krylov-Safonov type covering argument. 
Various parabolic versions have been developed since then; see for instance \cite{DBGV-mono, Liao, W}. 

It is exactly the improvement of the dependence of $\eta$ on $\al$ in the expansion of positivity that
leads to the refinement of the modulus of continuity from  \eqref{Eq:modulus:1} to  \eqref{Eq:modulus:2}.
Indeed, iterating the arguments presented above yields the reduction of oscillation 
\[
\essosc_{Q_n}u\le \om_n,\quad n=0,1,\cdots,
\]
along a nested family of cylinders $\{Q_n\}$
with their common vertex at $(x_o,t_o)$. The sequence $\{\om_n\}$ obeys two types of recurrences,
contingent upon the dependence of $\eta$ on $\al$, recalling also the dependence of $\al$ on $\om$ in \eqref{Eq:DG:measure}:
\begin{equation*}
\left\{
\begin{array}{lc}
\om_{n+1}=\big(1-2^{-\frac1{\om^q_n}}\big)\om_n,\quad&\text{(Type I)},\\[5pt]
\om_{n+1}=\big(1-\om^q_n\big)\om_n,\quad&\text{(Type II)}.
\end{array}\right.
\end{equation*}
Here we have again used $q>0$ as a generic  constant determined by the data.
An inspection on the sequence $\{\om_n\}$ reveals that for $n$ large
\begin{equation*}
\left\{
\begin{array}{lc}
\om_{n}\lesssim(\ln n)^{-\sig},\quad&\text{(Type I)},\\[5pt]
\om_{n}\lesssim n^{-\sig},\quad&\text{(Type II)},
\end{array}\right.
\end{equation*}
 for some proper $\sig\in(0,\frac1q)$. If the size of $Q_r\approx Q_n$ is quantified in a geometric fashion,   $r\approx(\tfrac12)^n$ for instance,
 then we have $n\approx\ln\frac1r$. Consequently, the modulus of continuity can be estimated by
 \begin{equation*}
\left\{
\begin{array}{lc}
\dsty\essosc_{Q_r}u\approx\essosc_{Q_n}u\le\om_{n}\lesssim(\ln n)^{-\sig}\approx(\ln \ln\tfrac1r)^{-\sig},\quad&\text{(Type I)},\\[5pt]
\dsty\essosc_{Q_r}u\approx\essosc_{Q_n}u\le\om_{n}\lesssim n^{-\sig}\approx (\ln\tfrac1r)^{-\sig},\quad&\text{(Type II)}.
\end{array}\right.
\end{equation*}

The type \eqref{Eq:modulus:2} {\it interior} modulus of continuity has been achieved in \cite{Urbano-14}.
Instead of the expansion of positivity, a weak Harnack inequality was employed in \cite{Urbano-14}. 
One purpose of this note is to demonstrate that the expansion of positivity is more flexible when
the type  \eqref{Eq:modulus:2} modulus of continuity   is requested not only in the interior,
but also at the {\it boundary}, given Neumann boundary data.
Needless to say, the expansion of positivity lies at the heart of any kind of Harnack's estimates, cf.~\cite{DBGV-acta, DBGV-mono}.

In the above outline, the singularity of $\be(\cdot)$ at $[u=0]$ is accommodated
by choosing proper levels $k$ for $u$, either close to $\mu^-$ or to $\mu^+$.
However, when the boundary regularity is considered, given general Dirichlet data,
the level $k$ has to obey extra restrictions, such that $(u-k)_\pm$ vanish on the lateral boundary, cf.~\eqref{Eq:k-restriction}.
As a result, the analysis becomes more involved.
The most crucial part of DiBenedetto's approach in \cite{DB-86} 
consists in quantifying the largeness
of $|[u\le0]|$ where the singularity appears. If it is small enough, then the usual parabolic scaling prevails. 
Otherwise, the largeness of $|[u\le0]|$ is incorporated into the singular term on left-hand side of the energy estimate, 
and then balances off the relative largeness brought by the singularity on the right-hand side;  
in this case, the usual parabolic scaling fails and intrinsically scaled cylinders have to be introduced.
Retrospectively, this idea lies at the origin of the {\it method of intrinsic scaling}, cf.~\cite{DB, DBGV-mono, Urbano-08}.

We will adapt this idea, in further balancing the singularity of $\be(\cdot)$ and the degeneracy brought by $\Dl_p$,
and in refining the approach by an argument from 
 \cite[Section~4.4]{DBGV-09} and \cite[Proposition~5.1]{DBG-16},
 which allows us to shrink the measure of the set where $u\approx0$
in a faster fashion than the original De Giorgi method, cf.~Lemma~\ref{Lm:shrink:1}.
We will not try to recapitulate all the steps here beforehand, as they are more involved and delicate
than the interior case. Instead we feel it is more appropriate to add remarks, which may assist understanding
of the technicalities, along the course of the proof.

\

\noi{\it Acknowledgement. } This research has been funded by 
the FWF--Project P31956--N32 “Doubly nonlinear evolution equations”.
\section{The Dirichlet problem: preliminary boundary estimates}\label{S:2}
We will deal with the oscillation decay of $u$ in the vicinity of the lateral boundary in Sections~\ref{S:2} -- \ref{S:bdry};
in the interior in Section~\ref{S:interior}; at the initial level in Section~\ref{S:Thm-proof}.
Afterwards the proof of Theorem~\ref{Thm:1:1} follows from standard covering arguments based on these three cases
in Section~\ref{S:Thm-proof}.
In the present section, we derive energy estimates for sub(super)-solutions to the Dirichlet problem \eqref{Dirichlet},
near the lateral boundary. Consequently, two De Giorgi type lemmas are implied by the energy estimates.

\subsection{Energy estimates}\label{S:energy-1}
In this section, we derive some energy estimates near $S_T$,
for solutions to the Dirichlet problem \eqref{Dirichlet}.
We will work
within the cylinder $Q_{R,S}:=K_R(x_o)\times (t_o-S,t_o)$ for some $(x_o,t_o)\in S_T$, such that $t_o-S>0$.
In order to employ the truncated functions $(u-k)_\pm$ across the lateral boundary, we need to impose
certain restrictions on the level $k$, i.e.,
\begin{equation}\label{Eq:k-restriction}
	\left\{
	\begin{aligned}
	&k\ge\sup_{Q_{R,S}\cap S_T}g\quad\text{ for sub-solutions},\\
	&k\le\inf_{Q_{R,S}\cap S_T}g\quad\text{ for super-solutions}.
	\end{aligned}
	\right.
\end{equation}
Let $\z$ be a non-negative, piecewise smooth cutoff function
vanishing on $\pl K_{R}(x_o)\times(t_o-S,t_o)$.
In such a way, the test functions 
\[
Q_{R,S}\cap E_T\ni(x,t)\mapsto\varphi(x,t) 
	= 
	\zeta^p(x,t) \big(u(x,t)-k\big)_\pm
\]
become admissible in the weak formulation of \eqref{Dirichlet}, as the functions $x\mapsto\big(u(x,t)-k\big)_\pm$ vanish
on $Q_{R,S}\cap S_T$ in the sense of traces for a.e. $t\in(t_o-S,t_o)$.
This fact does not require any smoothness of $\pl E$ (cf.~\cite[Lemma~2.1]{GLL}). 

In what follows, when we deal with integrals of $(u-k)_\pm$ across the lateral boundary $S_T$,
the functions $(u-k)_\pm$ will be understood as zero outside $E_T$.
Likewise, when we deal with a sub(super)-solution $u$ across the lateral boundary $S_T$, we tacitly understand it as
\begin{equation*}
u_k^{\pm}:=\left\{
\begin{array}{cl}
k\pm(u-k)_\pm\quad&\text{ in }Q_{R,S}\cap E_T,\\[5pt]
k\quad&\text{ in }Q_{R,S}\setminus E_T,
\end{array}\right.
\end{equation*}
for $k$ satisfying \eqref{Eq:k-restriction}.

Upon using the above test function $\vp$ in the boundary version of 
\eqref{Eq:int-form}, we have the following energy estimate; the detailed calculation is analogous to the one for \cite[(2.7)]{DB-82}.
\begin{proposition}\label{Prop:2:1}
	Let $u$ be a  local weak sub(super)-solution to \eqref{Dirichlet} with \eqref{Eq:1:2} in $E_T$.
	There exists a constant $\gm (C_o,C_1,p)>0$, such that
 	for all cylinders $Q_{R,S}=K_R(x_o)\times (t_o-S,t_o)$ with the vertex $(x_o,t_o)\in S_T$,
 	every $k\in\rr$ satisfying \eqref{Eq:k-restriction}, and every non-negative, piecewise smooth cutoff function
 	$\z$ vanishing on $\pl K_{R}(x_o)\times(t_o-S,t_o)$,  there holds
\begin{align*}
	\essup_{t_o-S<t<t_o}&\Big\{\int_{K_R(x_o)\times\{t\}}	
	\z^p (u-k)_\pm^2\,\dx +\Phi_{\pm}(k, t_o-S,t,\z)\Big\}\\
	&\quad+
	\iint_{Q_{R,S}}\z^p|D(u-k)_\pm|^p\,\dx\dt\\
	&\le
	\gm\iint_{Q_{R,S}}
		\Big[
		(u-k)^{p}_\pm|D\z|^p + (u-k)_\pm^2|\partial_t\z^p|
		\Big]
		\,\dx\dt\\
	&\quad
	+\int_{K_R(x_o)\times \{t_o-S\}} \z^p (u-k)_\pm^2\,\dx,
\end{align*}
where 
\begin{align*}
\Phi_{\pm}(k, t_o-S,t,\z)=&-\int_{K_R(x_o)\times\{\tau\}}\nu(x,\tau)\chi_{[u\le0]}[\pm(u-k)_\pm\z^p]\,\dx\Big|_{t_o-S}^t\\
&+\int_{t_o-S}^t\int_{K_R(x_o)}\nu(x,\tau)\chi_{[u\le0]}\pl_t[\pm(u-k)_\pm\z^p]\,\dx\d\tau
\end{align*}
and $\nu(x,t)$ is a selection out of $[0,\nu]$ for $u(x,t)=0$, and $\nu(x,t)=\nu$ for $u(x,t)<0$.
\end{proposition}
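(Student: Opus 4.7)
\medskip

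The plan is to mimic the classical Caccioppoli computation, but using the formulation \eqref{Eq:int-form} that already contains $\pl_t u$, so that the contributions of the jump part of $\be(\cdot)$ are neatly collected into $\Phi_\pm$. The first step is to verify that
\[
\vp(x,t)=\z^p(x,t)\big(u(x,t)-k\big)_\pm
\]
is an admissible test function in \eqref{Eq:int-form} on $Q_{R,S}$. Because $k$ satisfies \eqref{Eq:k-restriction} and $(u-g)_\pm\in L^p(0,T;W^{1,p}_o(E))$, the trace of $(u-k)_\pm$ on $Q_{R,S}\cap S_T$ vanishes; combined with $\z=0$ on $\pl K_R(x_o)\times(t_o-S,t_o)$, this shows $\vp(\cdot,t)\in W^{1,p}_o(K_R(x_o))$ for a.e.\ $t$. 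After extending $u$ by $u_k^\pm$ outside $E_T$ as indicated, we may integrate over the whole cylinder $Q_{R,S}$, and the extension does not alter the integrals since $(u-k)_\pm\equiv0$ in $Q_{R,S}\setminus E_T$.

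Next, I insert $\vp$ into \eqref{Eq:int-form}, treating the sub-solution and super-solution cases in parallel by retaining the $\pm$ throughout. The two $\nu\chi_{[u\le0]}$ terms, once a $\pm$ sign is attached (corresponding to multiplying the super-solution inequality by $-1$), regroup term-by-term into exactly the expression $\Phi_\pm(k,t_o-S,t,\z)$ appearing in the statement. The parabolic term is handled by the pointwise identity
\[
\pm\pl_t u\cdot(u-k)_\pm=\tfrac12\pl_t(u-k)_\pm^2,
\]
so that, after multiplying by $\z^p$ and integrating by parts in $t$,
\[
\iint_{Q_{R,S}}\z^p\,\pl_t u\,[\pm(u-k)_\pm]\,\dx\dt
=\tfrac12\int_{K_R(x_o)}\z^p(u-k)_\pm^2\,\dx\Big|_{t_o-S}^{t}
-\tfrac12\iint_{Q_{R,S}}(u-k)_\pm^2\,\pl_t\z^p\,\dx\dt.
\]

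For the diffusion term I split
\[
\bl{A}\cdot D\vp=\z^p\,\bl{A}\cdot D(\pm(u-k)_\pm)+p\z^{p-1}[\pm(u-k)_\pm]\,\bl{A}\cdot D\z.
\]
On $[\pm(u-k)_\pm>0]$ one has $D(\pm(u-k)_\pm)=\pm Du$, so the structure conditions \eqref{Eq:1:2} give the coercive bound $\z^p\bl{A}\cdot D(\pm(u-k)_\pm)\ge C_o\z^p|D(u-k)_\pm|^p$ and the growth bound $|\bl{A}|\le C_1|D(u-k)_\pm|^{p-1}$ on the same set. Young's inequality applied to the cross term,
\[
p C_1\z^{p-1}(u-k)_\pm|D(u-k)_\pm|^{p-1}|D\z|\le\tfrac12C_o\z^p|D(u-k)_\pm|^p+\gm(u-k)_\pm^{p}|D\z|^p,
\]
absorbs half of the good term into the left-hand side. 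Collecting these pieces over the interval $(t_o-S,t)$, moving $\Phi_\pm$ and the sup-in-time of the $L^2$ slice to the left, and finally taking the essential supremum in $t\in(t_o-S,t_o)$ yields the stated estimate, with the initial slice contribution $\int_{K_R(x_o)\times\{t_o-S\}}\z^p(u-k)_\pm^2\,\dx$ coming from the lower limit of the time integration.

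The main technical nuisance is not the computation itself but the justification that every passage is lawful in the low-regularity class of the definition: in particular, that the product rule in time can be applied to $(u-k)_\pm^2\z^p$ (this uses $u\in W^{1,2}_{\loc}(0,T;L^2)$ so that $(u-k)_\pm\in W^{1,2}$ as well), and that the singular measure $\nu(x,t)\chi_{[u\le0]}$ may be moved inside the time-differentiation to produce $\Phi_\pm$ without interfering with the truncation (on the set $[u\le0]\cap[(u-k)_+>0]$, which is empty once $k\ge0$, and analogously for the minus case under \eqref{Eq:k-restriction}). Once these measure-theoretic details are checked, the estimate is obtained by the same manipulations as in the interior proof of \cite[(2.7)]{DB-82}.
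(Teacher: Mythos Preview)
Your proposal is correct and follows exactly the approach the paper indicates: insert the test function $\vp=\z^p(u-k)_\pm$ into the formulation \eqref{Eq:int-form}, group the two $\nu\chi_{[u\le0]}$ terms into $\Phi_\pm$, treat the time term via $\pm\pl_t u\,(u-k)_\pm=\tfrac12\pl_t(u-k)_\pm^2$, and handle the elliptic part with \eqref{Eq:1:2} and Young's inequality---precisely the computation of \cite[(2.7)]{DB-82} that the paper cites.

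One small clarification: your closing remark about the set $[u\le0]\cap[(u-k)_+>0]$ being empty ``once $k\ge0$'' is unnecessary here and slightly misleading. In Proposition~\ref{Prop:2:1} no sign condition on $k$ is imposed (that comes only in Propositions~\ref{Prop:2:2}--\ref{Prop:2:3}); the terms $\Phi_\pm$ are simply recorded as they emerge from substituting $\vp$ into \eqref{Eq:int-form}, with no simplification required and no ``interference'' to worry about, since $\chi_{[u\le0]}\pl_t\vp$ appears directly in the weak formulation.
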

Based on the energy estimate in Proposition~\ref{Prop:2:1}, we may derive 
 two tailored versions. Note that the functions $\Phi_\pm$ carry information near the singularity of $\be(\cdot)$.
The first tailored version examines the case when $\Phi_\pm=0$, and hence  singularity disappears and
it recovers the energy estimate for the parabolic $p$-Laplacian \eqref{Eq:p-Laplace}.
\begin{proposition}\label{Prop:2:2}
Let the hypotheses in Proposition~\ref{Prop:2:1} hold.
If, in addition, $k\ge0$ in the case of sub-solutions or $k\le0$ in the case of super-solutions,
then for every non-negative, piecewise smooth cutoff function
 	$\z$ vanishing on $\pl_{\mathcal{P}} Q_{R,S}$, there holds
\begin{align*}
	\essup_{t_o-S<t<t_o}&\int_{K_R(x_o)\times\{t\}}	
	\z^p (u-k)_\pm^2\,\dx 
	+
	\iint_{Q_{R,S}}\z^p|D(u-k)_\pm|^p\,\dx\dt\\
	&\le
	\gm\iint_{Q_{R,S}}
		\Big[
		(u-k)^{p}_\pm|D\z|^p + (u-k)_\pm^2|\partial_t\z^p|
		\Big]
		\,\dx\dt.
\end{align*}
\end{proposition}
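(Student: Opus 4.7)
The strategy is to begin from Proposition~\ref{Prop:2:1} and to argue that, under the sign restriction imposed on $k$, the quantity $\Phi_\pm$ collecting the latent-heat contribution vanishes identically. Once this is established, the stated inequality reduces to that of Proposition~\ref{Prop:2:1} after one additional observation: since $\z$ is now required to vanish on the entire parabolic boundary $\pl_{\mathcal{P}}Q_{R,S}$ (not only on the lateral face as in Proposition~\ref{Prop:2:1}), the initial slice $\int_{K_R(x_o)\times\{t_o-S\}}\z^p(u-k)_\pm^2\,\dx$ drops out.

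For a sub-solution with $k\ge0$, the argument is essentially geometric. On $[u\le0]$ one has $u\le0\le k$, hence $(u-k)_+\equiv 0$ there, so the first term in the definition of $\Phi_+$ vanishes pointwise. Expanding the time derivative in the second term via the chain rule for truncations gives $\pl_t\bigl[(u-k)_+\z^p\bigr]=\chi_{\{u>k\}}\pl_t u\,\z^p+(u-k)_+\pl_t\z^p$, and multiplying each summand by $\chi_{[u\le0]}$ yields zero (the first because $\{u>k\}\cap[u\le 0]=\emptyset$ when $k\ge0$, the second because $(u-k)_+$ vanishes on $[u\le0]$). Thus $\Phi_+\equiv0$.

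For a super-solution with $k\le0$, the key point is that $(u-k)_-$ is supported on $\{u\le k\}\subseteq\{u\le 0\}$, and on this support the selection $\nu(x,t)$ can be replaced by the \emph{constant} $\nu$: on $\{u<0\}$ this is its prescribed value, while on $\{u=0\}$ the factor $(u-k)_-|_{u=0}=(-k)_-=0$ kills any ambiguity. Similarly, $\chi_{[u\le0]}$ can be dropped from the time-derivative term by the same support considerations. Therefore
\[
\Phi_-(k,t_o-S,t,\z)=\nu\!\int_{K_R(x_o)}\!(u-k)_-\z^p\,\dx\,\Big|_{t_o-S}^{t}-\nu\!\int_{t_o-S}^{t}\!\!\int_{K_R(x_o)}\!\pl_\tau\bigl[(u-k)_-\z^p\bigr]\,\dx\,\d\tau,
\]
and Fubini together with the fundamental theorem of calculus (justified by $(u-k)_-\z^p\in W^{1,1}(t_o-S,t;L^1(K_R(x_o)))$, which follows from $u\in W^{1,2}(0,T;L^2(E))$ and the chain rule for truncations) shows that the second integral equals the first, giving $\Phi_-\equiv 0$.

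Inserting $\Phi_\pm\equiv0$ into the conclusion of Proposition~\ref{Prop:2:1} and invoking $\z|_{t_o-S}\equiv0$ to discard the initial slice produces the asserted estimate verbatim. The only genuinely delicate point is the cancellation in $\Phi_-$: one must carefully track where $\chi_{[u\le0]}$ and $\nu(x,t)$ can be replaced by the constant $\nu$, and verify the Sobolev regularity needed to apply the fundamental theorem of calculus in time — everything else is formal substitution.
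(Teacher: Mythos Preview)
Your proposal is correct and follows essentially the same approach as the paper: the paper's proof consists of the two observations that $\Phi_\pm$ vanish under the stated sign restrictions on $k$, and that the bottom-slice term disappears because $\z$ now vanishes on all of $\pl_{\mathcal P}Q_{R,S}$. You have supplied the details the paper omits, in particular the careful FTC cancellation for $\Phi_-$ when $k\le 0$, where the integrand does not vanish pointwise; this is exactly the right justification.
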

\begin{proof}
We need only to notice that under the current restrictions on $k$, 
the functions $\Phi_{\pm}$ vanish. Also now the cutoff function $\z$ is required to vanish at the bottom level $t_o-S$.
\end{proof}
The second one examines the effect of the singularity of $\be(\cdot)$ more carefully.
\begin{proposition}\label{Prop:2:3}
Let the hypotheses in Proposition~\ref{Prop:2:1} hold.
If, in addition, $k\ge0$ in the case of super-solutions,
then for every non-negative, piecewise smooth cutoff function
 	$\z$ vanishing on $\pl_{\mathcal{P}} Q_{R,S}$, there holds
\begin{align*}
	\essup_{t_o-S<t<t_o}&\Big\{\int_{K_R(x_o)\times\{t\}}	
	\z^p (u-k)_-^2\,\dx +k\int_{K_R(x_o)\times\{t\}}\nu\chi_{[u\le0]}\z^p\,\dx\Big\}\\
	&\quad+
	\iint_{Q_{R,S}}\z^p|D(u-k)_-|^p\,\dx\dt\\
	&\le
	\gm\iint_{Q_{R,S}}
		\Big[
		(u-k)^{p}_-|D\z|^p + (u-k)_-^2 |\partial_t\z^p|
		\Big]
		\,\dx\dt\\
		&\quad+\iint_{Q_{R,S}}\nu\chi_{[u\le0]}(k-u)|\pl_t\z^p|\,\dx\dt.
\end{align*}
\end{proposition}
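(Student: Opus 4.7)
I would specialize Proposition~\ref{Prop:2:1} to a super-solution $u$ with the truncation $(u-k)_-$ under the additional hypothesis $k\ge 0$, and then compute $\Phi_-(k,t_o-S,t,\z)$ explicitly so as to migrate the latent-heat quantity $k\!\int\nu\chi_{[u\le 0]}\z^p$ onto the left-hand side. Because $\z$ vanishes on the full parabolic boundary $\pl_{\mathcal{P}}Q_{R,S}$, the initial-trace term $\int\z^p(u-k)_-^2|_{t_o-S}$ on the right-hand side of Proposition~\ref{Prop:2:1} drops out, and the contribution at the lower endpoint $t_o-S$ of the boundary integral inside $\Phi_-$ also vanishes.

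\textbf{Key identity.} The heart of the proof is the identity
\[
\Phi_-(k,t_o-S,t,\z)
=
k\int_{K_R(x_o)\times\{t\}}\!\nu\chi_{[u\le 0]}\z^p\,\dx
\;-\;
k\int_{t_o-S}^{t}\!\int_{K_R(x_o)}\nu\chi_{[u\le 0]}\,\pl_t\z^p\,\dx\d\tau.
\]
Its derivation rests on two ingredients. First, on $[u\le 0]$ with $k\ge 0$ the pointwise decomposition $(u-k)_-=k+u_-$ holds, which splits the boundary term at time $t$ cleanly into a $k$-piece and a $u_-$-piece. Second, expanding $\pl_t[(u-k)_-\z^p]$ by the product rule produces an integrand $\chi_{[u\le 0]}\z^p\pl_t(u-k)_-$, which by the a.e.\ chain-rule identity $\chi_{[u\le 0]}\pl_t(u-k)_-=\pl_t u_-$ (valid since $[u\le 0]\subset[u<k]$ a.e.\ for $k\ge 0$, together with $\pl_t u=0$ a.e.\ on $\{u=0\}$) equals $\z^p\pl_t u_-$. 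Integrating by parts in time on the resulting $\int\!\!\int\nu\z^p\pl_t u_-$ and using $\z=0$ at $t_o-S$ produces a top-surface term $-\int\nu u_-\z^p(\cdot,t)$ and a volume term $\int\!\!\int\nu u_-\pl_t\z^p$. When everything is collected, the $u_-$ contributions cancel pairwise and only the two $k$-multiplied terms in the display survive.

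\textbf{Conclusion and main obstacle.} Substituting this identity into Proposition~\ref{Prop:2:1} places $k\!\int\nu\chi_{[u\le 0]}\z^p(\cdot,t)$ inside the $\essup$ on the left, while the remainder $-k\!\int\!\!\int\nu\chi_{[u\le 0]}\pl_t\z^p$ is moved to the right. Replacing $\pl_t\z^p$ by $|\pl_t\z^p|$ and enlarging the upper time limit to $t_o$ decouples this remainder from the $\essup$; finally the elementary pointwise inequality $k\le k-u$ on $[u\le 0]$ (which holds because $u\le 0\le k$ there) upgrades it to $\int\!\!\int\nu\chi_{[u\le 0]}(k-u)|\pl_t\z^p|$, matching the statement. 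The only subtle point in the whole argument is the chain-rule/time-integration-by-parts manipulation around $\pl_t u_-$: it must be justified via Steklov averages (or another time regularization) of $u$, with the identity for $\Phi_-$ established for the regularized functions and then passed to the limit in $W^{1,2}_{\loc}(0,T;L^2_{\loc}(E))$. This is however exactly the machinery that already underlies Proposition~\ref{Prop:2:1}, so no new technical input is needed beyond what has already been used.
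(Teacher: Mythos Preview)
Your proposal is correct and follows essentially the same route as the paper: both decompose $(u-k)_-=k+u_-$ on $[u\le 0]$, use the chain-rule identity $\nu(x,t)\chi_{[u\le 0]}\pl_t(u-k)_-=\nu\,\pl_t u_-$, integrate by parts in time, and exploit $\z=0$ at $t_o-S$ to kill the bottom contributions. The only cosmetic difference is that you push the cancellation all the way to the clean identity $\Phi_-=k\int_t\nu\chi_{[u\le0]}\z^p-k\iint\nu\chi_{[u\le0]}\pl_t\z^p$ and then recover the factor $(k-u)$ via $k\le k-u$ on $[u\le0]$, whereas the paper stops one step earlier with the inequality $\Phi_-\ge k\int_t\nu\chi_{[u\le0]}\z^p-\iint\nu\chi_{[u\le0]}(k-u)\pl_t\z^p$, keeping $(k-u)$ throughout.
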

\begin{proof}
Let us omit the reference to $x_o$ for simplicity.
We only need to calculate
\begin{align*}
\Phi_{-}&(k, t_o-S,t,\z)=\int_{K_R}\nu(x,\tau)\chi_{[u\le0]}[(u-k)_-\z^p]\,\dx\Big|_{t_o-S}^t\\
&\quad-\iint_{Q_{R,S}}\nu(x,\tau)\chi_{[u\le0]}\pl_t[(u-k)_-\z^p]\,\dx\d\tau\\
&=k\int_{K_R}\nu\chi_{[u\le0]}\z^p\,\dx\Big|_{t_o-S}^t+\int_{K_R}\nu u_-\z^p\,\dx\Big|_{t_o-S}^t\\
&\quad-\iint_{Q_{R,S}}\nu \pl_tu_-\z^p\,\dx
 -\iint_{Q_{R,S}}\nu \chi_{[u\le0]}(u-k)_-\pl_t\z^p\,\dx\d\tau\\
&\ge k\int_{K_R\times\{t\}}\nu\chi_{[u\le0]}\z^p\,\dx
-\iint_{Q_{R,S}}\nu \chi_{[u\le0]}(k-u)\pl_t\z^p\,\dx\d\tau.
\end{align*}
Then substituting it back to the energy estimate in Proposition~\ref{Prop:2:1}, we conclude.
\end{proof}
\subsection{De Giorgi type lemmas}\label{S:DG}
For $(x_o,t_o)\in S_T$ and a cylinder $Q_{R,S}=K_{R}(x_o)\times(t_o-S,t_o)$ as in Section~\ref{S:energy-1},
we introduce the numbers $\mu^{\pm}$ and $\om$ satisfying
\begin{equation*}
	\mu^+\ge\essup_{Q_{R,S}} u,
	\quad 
	\mu^-\le\essinf_{Q_{R,S}} u,
	\quad
	\om\ge\mu^+ - \mu^-.
\end{equation*}
Let $\widetilde\theta\ge\theta>0$ be  parameters to be determined.
The cylinders $Q_\rho(\theta)$ and $Q_\rho(\widetilde\theta)$ are coaxial with $Q_{R,S}$ and with the same vertex $(x_o,t_o)$;
moreover, we will impose that 
\[
\rho<R, \quad\widetilde\theta(8\rho)^p<S,\quad\text{ such that }\quad Q_\rho(\theta)\subset Q_\rho(\widetilde\theta)\subset Q_{R,S}.
\]

We now present the first De Giorgi type lemma.
\begin{lemma}\label{Lm:DG:1}
Let $u$ be a   weak super-solution to \eqref{Dirichlet} with \eqref{Eq:1:2} in $E_T$. For $\xi\in(0,1)$,
set $\theta=(\xi\om)^{2-p}$ and suppose that $$\mu^-+\xi\om\le \inf_{Q_{R,S}\cap S_T}g.$$
There exists a  constant $c_o\in(0,1)$ depending only on the data, such that if
\[
|[u\le\mu^-+\xi\om]\cap Q_{\rho}(\theta)|\le c_o(\xi\om)^{\frac{N+p}p}|Q_{\rho}(\theta)|,
\]
then
\[
u\ge \mu^-+\tfrac12\xi\om\quad\text{ a.e. in }Q_{\frac12\rho}(\theta).
\]
\end{lemma}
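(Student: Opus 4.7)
The plan is to run a standard De~Giorgi iteration on the sub-level sets of $u$ near level $k = \mu^- + \xi\om$, working on a telescoping family of cylinders and levels that shrink to the target cylinder and the target level $\mu^- + \tfrac{1}{2}\xi\om$. Concretely, I would set
\[
\rho_n = \tfrac{\rho}{2}+\tfrac{\rho}{2^{n+1}},\quad k_n = \mu^- + \tfrac{\xi\om}{2}+\tfrac{\xi\om}{2^{n+1}},\quad Q_n = K_{\rho_n}(x_o)\times(t_o-\theta\rho_n^p, t_o),
\]
and define $A_n = [u<k_n]\cap Q_n$, $Y_n = |A_n|/|Q_n|$. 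The goal is $Y_n\to 0$, which yields $u\ge \mu^- + \tfrac12\xi\om$ a.e.~in $Q_{\rho/2}(\theta)$. Since $k_n\le\mu^-+\xi\om\le\inf_{Q_{R,S}\cap S_T}g$, the restriction \eqref{Eq:k-restriction} is satisfied for every $n$, so the standard cutoffs $\z_n$ supported in $Q_n$, equal to one on $Q_{n+1}$, with $|D\z_n|\lesssim 2^n/\rho$ and $|\pl_t\z_n^p|\lesssim 2^{np}/(\theta\rho^p)$, are admissible in the boundary energy estimates of Section~\ref{S:energy-1}.

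Next, I would insert $\z_n$ and $k_n$ into the appropriate energy estimate. The intrinsic choice $\theta=(\xi\om)^{2-p}$ balances the two contributions on the right-hand side, since $(u-k_n)_-\le\xi\om$ implies $(\xi\om)^p|D\z_n|^p\sim(\xi\om)^2|\pl_t\z_n^p|\sim (\xi\om)^p 2^{np}/\rho^p$. I would split on the sign of $k_n$: if $\mu^-+\xi\om\le 0$ then $k_n\le 0$ for all $n$ and Proposition~\ref{Prop:2:2} yields the clean bound $E_n:=\sup_t\!\int \z_n^p(u-k_n)_-^2\,\dx+\iint\z_n^p|D(u-k_n)_-|^p\,\dx\dt\lesssim (\xi\om)^p 2^{np}\rho^{-p}|A_n|$. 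Otherwise $k_n>0$ and Proposition~\ref{Prop:2:3} applies: the extra term $k_n\!\int\nu\chi_{[u\le 0]}\z_n^p\,\dx$ on the left is non-negative and may be discarded, while the extra right-hand side contribution $\iint\nu\chi_{[u\le 0]}(k_n-u)|\pl_t\z_n^p|\,\dx\dt$, using $k_n-u\le \xi\om$, is bounded by $\nu(\xi\om)^{p-1}2^{np}\rho^{-p}|A_n|$, which dominates and upgrades the bound to $E_n\lesssim (\xi\om)^{p-1}2^{np}\rho^{-p}|A_n|$.

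Finally, I would apply the parabolic Sobolev embedding $\iint v_n^{p(N+2)/N}\le C(\sup_t\!\int v_n^2)^{p/N}\iint|Dv_n|^p$ with $v_n=\z_n(u-k_n)_-$, whose right-hand side is controlled by $E_n^{(N+p)/N}$. On $A_{n+1}\subset Q_{n+1}$ we have $v_n\ge k_n-k_{n+1}=\xi\om/2^{n+2}$, and combining this lower bound with $|Q_n|\sim \rho^{N+p}(\xi\om)^{2-p}$ and collecting all powers of $\rho$ (which cancel) and of $\xi\om$ gives the recurrence
\[
Y_{n+1}\le C\,b^n\,(\xi\om)^{-\frac{N+p}{N}}\,Y_n^{1+\frac{p}{N}}
\]
for constants $C,b$ depending only on the data. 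The classical fast-geometric-convergence lemma then forces $Y_n\to 0$ precisely when $Y_0\le c_o(\xi\om)^{(N+p)/p}$, which is the hypothesis. The main obstacle is the second case of the dichotomy: the singularity of $\be(\cdot)$ at $\{u=0\}$ forces the use of Proposition~\ref{Prop:2:3} and replaces $(\xi\om)^p$ by the larger $(\xi\om)^{p-1}$ in the energy; it is exactly this extra factor of $(\xi\om)^{-1}$ (together with intrinsic scaling), propagated through the Sobolev step, that is compensated by requiring the initial smallness to scale as $(\xi\om)^{(N+p)/p}$ rather than as an absolute constant.
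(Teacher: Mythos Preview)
Your proposal is correct and follows essentially the same route as the paper: the same telescoping levels and cylinders, the same dichotomy on the sign of $k_n$ invoking Proposition~\ref{Prop:2:2} or Proposition~\ref{Prop:2:3}, and the same energy bound $E_n\lesssim(\xi\om)^{p-1}2^{pn}\rho^{-p}|A_n|$ which the paper states as ``Propositions~\ref{Prop:2:2}--\ref{Prop:2:3} combined.'' The only cosmetic difference is in the Sobolev step: the paper first applies H\"older to $\iint(u-\tilde k_n)_-\phi$ and then the embedding, obtaining the recurrence exponent $1+\tfrac{1}{N+2}$, whereas you bound $\iint v_n^{p(N+2)/N}$ directly, obtaining the exponent $1+\tfrac{p}{N}$; both variants are standard and yield the identical smallness threshold $Y_0\le c_o(\xi\om)^{(N+p)/p}$.
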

\begin{proof}
Let us assume $(x_o,t_o)=(0,0)$.
In order to use the energy estimates in Propositions~\ref{Prop:2:2} -- \ref{Prop:2:3}, we set
\begin{align}\label{choices:k_n}
	\left\{
	\begin{array}{c}
	\displaystyle k_n=\mu^-+\frac{\xi\om}2+\frac{\xi\om}{2^{n+1}},\quad \tilde{k}_n=\frac{k_n+k_{n+1}}2,\\[5pt]
	\displaystyle \varrho_n=\frac{\varrho}2+\frac{\varrho}{2^{n+1}},
	\quad\tilde{\varrho}_n=\frac{\varrho_n+\varrho_{n+1}}2,\\[5pt]
	\displaystyle K_n=K_{\varrho_n},\quad \widetilde{K}_n=K_{\tilde{\varrho}_n},\\[5pt] 
	\displaystyle Q_n=Q_{\rho_n}(\theta),\quad
	\widetilde{Q}_n=Q_{\tilde\rho_n}(\theta).
	\end{array}
	\right.
\end{align}
Recall that $Q_{\rho_n}(\theta)=K_n\times(-\theta\varrho_n^p,0)$ and $Q_{\tilde\rho_n}(\theta)=\widetilde{K}_n\times(-\theta\tilde{\varrho}_n^p,0)$. 
Introduce the cutoff function $0\le\z\le 1$ vanishing on the parabolic boundary of $Q_{n}$ and
equal to identity in $\widetilde{Q}_{n}$, such that
\begin{equation*}
	|D\z|\le \gm\frac{2^n}{\varrho}
	\quad\text{and}\quad 
	|\pl_t\z|\le \gm\frac{2^{pn}}{\theta\varrho^p}.
\end{equation*}
In this setting, the energy estimates in Propositions~\ref{Prop:2:2} -- \ref{Prop:2:3} combined will yield that
\begin{align*}
	\essup_{-\theta\tilde{\varrho}_n^p<t<0}
	&\int_{\widetilde{K}_n} (u-\tilde{k}_n)_-^2\,\dx
	+
	\iint_{\widetilde{Q}_n}|D(u-\tilde{k}_n)_-|^p \,\dx\dt\\
	&\qquad\le
	 \gm \frac{2^{pn}}{\varrho^p}(\xi\om)^{p-1}|A_n|,
\end{align*}
where
\begin{equation*}
	A_n=\big[u<k_n\big]\cap Q_n.
\end{equation*}
Now setting $0\le\phi\le1$ to be a cutoff function which vanishes on the parabolic boundary of $\widetilde{Q}_n$
and equals the identity in $Q_{n+1}$, an application of the H\"older inequality  and the Sobolev imbedding
\cite[Chapter I, Proposition~3.1]{DB} gives that
\begin{align*}
	\frac{\xi\om}{2^{n+3}}
	|A_{n+1}|
	&\le 
	\iint_{\widetilde{Q}_n}\big(u-\tilde{k}_n\big)_-\phi\,\dx\dt\\
	&\le
	\bigg[\iint_{\widetilde{Q}_n}\big[\big(u-\tilde{k}_n\big)_-\phi\big]^{p\frac{N+2}{N}}
	\,\dx\dt\bigg]^{\frac{N}{p(N+2)}}|A_n|^{1-\frac{N}{p(N+2)}}\\
	&\le \gm
	\bigg[\iint_{\widetilde{Q}_n}\big|D\big[(u-\tilde{k}_n)_-\phi\big]\big|^p\,
	\dx\dt\bigg]^{\frac{N}{p(N+2)}}\\
	&\quad\ 
	\times\bigg[\essup_{-\widetilde{\theta}\tilde{\varrho}_n^p<t<0}
	\int_{\widetilde{K}_n}\big(u-\tilde{k}_n\big)^{2}_-\,\dx\bigg]^{\frac{1}{N+2}}
	 |A_n|^{1-\frac{N}{p(N+2)}}\\
	&\le 
	\gm\bigg[(\xi\om)^{p-1}\frac{2^{pn}}{\rho^p}\bigg]^{\frac{N+p}{p(N+2)}}
	|A_n|^{1+\frac{1}{N+2}} 
\end{align*}
In the second-to-last line we used the above energy estimate.
In terms of $  Y_n=|A_n|/|Q_n|$, this can be rewritten as
\begin{equation*}
	 Y_{n+1}
	\le
	\frac{\gm C^n}{(\xi\om)^{\frac{N+p}{p(N+2)}}} Y_n^{1+\frac{1}{N+2}},
\end{equation*}
for positive constants $\gm$ depending only on the data and  $C=C(p,N)$.
Hence, by \cite[Chapter I, Lemma~4.1]{DB}, 
there exists
a positive constant $c_o$ depending only on the data, such that
$Y_n\to0$ if we require that $Y_o\le c_o(\xi\om)^{\frac{N+p}p}$, which amounts to
\begin{equation*}
	|A_o|=\big|\big[u<k_o\big]\cap Q_o\big|
	= 
	\Big|\Big[u<\mu^-+\tfrac12 \xi\om\Big]\cap Q_{\varrho}(\theta)\Big|
	\le
	c_o(\xi\om)^{\frac{N+p}p}\big| Q_{\varrho}(\theta)\big|.
\end{equation*}
Since $Y_n\to 0$ as $n\to\infty$, we have
\begin{equation*}
	\Big|\Big[u<\mu^-+\tfrac12 \xi\om \Big]\cap Q_{\frac12 \varrho}(\theta)\Big|
	= 
	0.
\end{equation*}
This concludes the proof of the lemma.
\end{proof}
The second De Giorgi type lemma is as follows.
\begin{lemma}\label{Lm:DG:2}
Let the hypotheses in Lemma~\ref{Lm:DG:1} hold. There exists $\xi\in(0,1)$ depending only on the data and $\al_*$,
such that if 
\[
\iint_{Q_{\rho}(\theta)}\nu\chi_{[u\le0]}\,\dx\dt
\le\xi\om|[u\le \mu^-+\tfrac12\xi\om]\cap Q_{\frac12\rho}(\theta)|
\]
and
\[
|\mu^-|\le\xi\om,
\]
then
\[
u\ge\mu^-+\tfrac14\xi\om\quad\text{ a.e. in }Q_{\frac12\rho}(\theta).
\]
\end{lemma}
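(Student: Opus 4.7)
The plan is to execute a De Giorgi type iteration in the spirit of Lemma~\ref{Lm:DG:1}, but replacing the plain energy estimate of Proposition~\ref{Prop:2:2} by the refined one of Proposition~\ref{Prop:2:3}, which isolates the singular contribution of $\be(\cdot)$ near $[u\le 0]$ in a form that can be controlled by the smallness hypothesis. First I would introduce decreasing levels
\[
k_n = \mu^-+\tfrac14\xi\om+\frac{\xi\om}{2^{n+2}},\qquad \tilde k_n = \tfrac12(k_n+k_{n+1}),
\]
accumulating at the target $\mu^-+\tfrac14\xi\om$, together with the shrinking cylinders $Q_n,\widetilde Q_n$ and cutoff functions $\z,\phi$ exactly as in \eqref{choices:k_n}. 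Under the assumption $|\mu^-|\le\xi\om$ with $\xi$ a priori small, one can arrange $\tilde k_n \ge 0$ in the generic case $\mu^- \ge -\tfrac14\xi\om$, so that Proposition~\ref{Prop:2:3} is applicable at each level.

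Applying Proposition~\ref{Prop:2:3} at level $\tilde k_n$ over $\widetilde Q_n$, the right-hand side of the De Giorgi estimate picks up the extra singular contribution
\[
\gm\iint_{Q_n}\nu\chi_{[u\le 0]}(\tilde k_n-u)\,|\pl_t\z^p|\,\dx\dt.
\]
Bounding $(\tilde k_n-u)\le \xi\om$ on $[u\ge\mu^-]$, $|\pl_t\z^p|\le \gm 2^{pn}/(\theta\rho^p)$, and invoking the hypothesis
\[
\iint_{Q_\rho(\theta)} \nu\chi_{[u\le 0]}\,\dx\dt\le \xi\om\,|[u\le \mu^-+\tfrac12\xi\om]\cap Q_{\frac12\rho}(\theta)|,
\]
together with the intrinsic choice $\theta=(\xi\om)^{2-p}$, turns this contribution into a term of the same order $(\xi\om)^{p-1}\cdot 2^{pn}/\rho^p$ as the standard De Giorgi terms, up to a multiplicative factor which can be absorbed by taking $\xi$ sufficiently small.

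Inserting this controlled energy estimate into the parabolic Sobolev embedding exactly as in the proof of Lemma~\ref{Lm:DG:1}, and setting $Y_n=|A_n|/|Q_n|$ with $A_n=[u<k_n]\cap Q_n$, I expect a recursion
\[
Y_{n+1}\le \frac{\gm\, C^n}{(\xi\om)^{\frac{N+p}{p(N+2)}}}\,Y_n^{1+\frac1{N+2}},
\]
to which the fast convergence lemma \cite[Chapter~I, Lemma~4.1]{DB} applies. Provided $\xi$ is chosen small enough in terms of the data and $\al_*$, this forces $Y_n\to 0$, hence $u\ge\mu^-+\tfrac14\xi\om$ a.e.~in $Q_{\frac12\rho}(\theta)$. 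The dependence on $\al_*$ enters at the Sobolev/Poincar\'e step through the boundary geometry: the truncations $(u-\tilde k_n)_-$ need not be supported away from $\pl E$, and the exterior measure density \eqref{geometry} is what compensates for the missing trace and quantifies the resulting constant.

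The main obstacle will be the residual case $\mu^- < -\tfrac14\xi\om$, in which some $\tilde k_n$ are negative and Proposition~\ref{Prop:2:3} cannot be invoked directly. In that regime one has to fall back on Proposition~\ref{Prop:2:1} and extract positivity from the $\Phi_-$ term by hand, balancing its singular contribution against the smallness produced by the hypothesis. It is precisely this balancing, combined with the role of $\al_*$ in the boundary Poincar\'e step, that ultimately pins down the quantitative choice of $\xi$.
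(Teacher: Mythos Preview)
Your iteration cannot close because the hypothesis of Lemma~\ref{Lm:DG:2} provides no smallness of $Y_0$. The fast convergence lemma applied to your recursion requires $Y_0\le c_o(\xi\om)^{\frac{N+p}{p}}$; but nothing in the statement bounds $|[u<\mu^-+\tfrac12\xi\om]\cap Q_\rho(\theta)|$ from above (indeed $u$ could equal $\mu^-$ on a set of fixed positive measure). Taking $\xi$ small does not help---it makes the threshold on $Y_0$ \emph{more} stringent, not less, since the prefactor $(\xi\om)^{-\frac{N+p}{p(N+2)}}$ grows. There is also a secondary issue: after invoking the hypothesis, your bound on the singular term is proportional to the \emph{fixed} measure $|[u\le\mu^-+\tfrac12\xi\om]\cap Q_{\frac12\rho}(\theta)|$, not to $|A_n|$, so it does not fit into a homogeneous recursion in $Y_n$ at all.

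The paper's argument is structurally different. It works at the \emph{single} level $k=\mu^-+\xi\om$, which is $\ge 0$ under $|\mu^-|\le\xi\om$ (so your ``residual case'' never arises), and uses the hypothesis to absorb the singular term back into the standard right-hand side via
\[
(\xi\om)^2\big|[u\le\mu^-+\tfrac12\xi\om]\cap Q_{\frac12\rho}(\theta)\big|
\le 4\iint_{Q_r(\theta)}(u-k)_-^2\,\dx\dt,
\]
since $(u-k)_-\ge\tfrac12\xi\om$ on that set. This yields a \emph{clean} energy inequality of parabolic $p$-Laplace type, with no $\om$-dependence in the constants. At that point the full boundary theory of Appendix~\ref{A:1} is invoked: a measure-shrinking lemma based on De Giorgi's isoperimetric inequality (this is where $\al_*$ actually enters, not through Sobolev/Poincar\'e as you suggest), followed by a De Giorgi iteration once the measure has been made small. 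Your proposal collapses these two steps into one and thereby loses the mechanism---the shrinking lemma driven by the exterior density---that manufactures the missing smallness of $Y_0$.
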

\begin{proof}
We tend to use the energy estimate in Proposition~\ref{Prop:2:3} with $Q_{R,S}=Q_r(\theta)$
for $\frac12\rho\le r\le \rho$, and with $k=\mu^-+\xi\om\ge0$.
To this end, let us first estimate the last integral on the right-hand side of
 the energy estimate via the given measure theoretical information:
\begin{align*}
\iint_{Q_{r}(\theta)}&\nu\chi_{[u\le0]}(k-u) |\pl_t\z^p|\,\dx\dt\\
&\le(k-\mu^-)\iint_{Q_{r}(\theta)}\nu\chi_{[u\le0]} |\pl_t\z^p|\,\dx\dt\\
&\le\xi\om  \|\pl_t\z^p\|_{\infty} \iint_{Q_{\rho}(\theta)}\nu\chi_{[u\le0]}  \,\dx\dt\\
&\le (\xi\om)^2  \|\pl_t\z^p\|_{\infty} |[u\le \mu^-+\tfrac12\xi\om]\cap Q_{\frac12 \rho}(\theta)|\\
&\le 4 \|\pl_t\z^p\|_{\infty} \iint_{Q_{r}(\theta)}[u-(\mu^-+\xi\om)]^2_-\,\dx\dt.
\end{align*}
This means that the integral can be combined with a similar term involving $\pl_t\z^p$ on the right-hand side
of the energy estimate in Proposition~\ref{Prop:2:3}.
Consequently, we obtain an energy estimate of $(u-k)_-$ from which the theory of parabolic $p$-Laplacian applies, 
cf.~Lemmas~\ref{Lm:reduc-osc-1} -- \ref{Lm:reduc-osc-2} and Appendix~\ref{A:1}.
Therefore we obtain a constant $\xi$ determined only by the data and $\al_*$ of \eqref{geometry}, such that
\[
u\ge\mu^-+\tfrac14\xi\om\quad\text{ a.e. in }Q_{\frac12\rho}(\theta).
\]
The proof is now completed.
\end{proof}
\begin{remark}\label{Rmk:2:1}\upshape
Before heading to the reduction of boundary oscillation,  
we try to shed some light on the roles of Lemmas~\ref{Lm:DG:1} -- \ref{Lm:DG:2}
as a preparation for the technically involved Section~\ref{S:bdry}. 
Both lemmas are examining the {\it measure information} needed
in order for the degeneracy of $\Dl_p$ to dominate the singularity of $\be(\cdot)$.
As such the second term on the left-hand side of the energy estimate in Proposition~\ref{Prop:2:3} is discarded and
 the time scaling $\theta=(\xi\om)^{2-p}$ is employed to accommodate the degeneracy of $\Dl_p$; 
 the reduction of oscillation in this case will be demonstrated in Section~\ref{S:reduc-osc-1}. 
Attention is called to the difference between Section~\ref{S:reduc-osc-1} and Section~\ref{S:reduc-osc-4}, where the same time scaling
is evoked. Indeed, {\it pointwise information} is imposed in Section~\ref{S:reduc-osc-4} to avoid the singularity of $\be(\cdot)$ completely,
such that the case falls into the scope of the theory for the parabolic $p$-Laplacian in \cite{DB}.
On the other hand, violation of the measure information in    Lemmas~\ref{Lm:DG:1} -- \ref{Lm:DG:2} means that
the singularity of $\be(\cdot)$ overtakes the degeneracy of $\Dl_p$ and
will yield an estimate from below on $|[u\le0]|$ (see \eqref{Eq:opposite:3}), which bears the singularity of $\be(\cdot)$.
This lower bound of $|[u\le0]|$ in turn gives a lower bound for the second term on the left-hand side of the energy estimate in Proposition~\ref{Prop:2:3}; in this case the first term on the left-hand side of the energy estimate in Proposition~\ref{Prop:2:3}
is discarded (cf.~Lemma~\ref{Lm:energy}) and consequently, 
the ``longer" time scaling $\widetilde\theta=(\dl\xi\om)^{1-p}$ will be needed to accommodate
the singularity, cf.~Remark~\ref{Rmk:3:2}.
The reduction of oscillation in this case will be taken on in Section~\ref{S:reduc-osc-2}.
\end{remark}
\section{The Dirichlet problem: reduction of boundary oscillation}\label{S:bdry}
Let $u$ be a solution to the Dirichlet problem \eqref{Dirichlet}.
Fix $(x_o,t_o)\in S_T$ and define the numbers $\mu^\pm$ and $\om$ as in Section~\ref{S:DG}
with $Q_{R,S}$ replaced by 
$$\widetilde{Q}_o:=K_{8\rho}(x_o)\times(t_o-\rho^{p-1}, t_o),\quad \text{for some }\rho\in(0,\tfrac18\bar\rho],$$
where $\bar\rho$ is defined in \eqref{geometry}, which we may assume to be so small that $t_o\ge (\frac18\bar\rho)^{p-1}$.
We now state the main result of this section, which describes the oscillation decay
of $u$, along shrinking cylinders with the same vertex $(x_o,t_o)$ fixed on the lateral boundary $S_T$.

\begin{proposition}\label{Prop:bdry:D}
Suppose the hypotheses in Theorem~\ref{Thm:1:1} hold true.
There exists  $\sig\in(0,1)$ depending only on the data and $\al_*$,
such that if $g$ verifies
\[
\osc_{K_{r}(x_o)\times(t_o-r^{p-1}, t_o)}g\le \frac{C_g}{|\ln r|^{\lm}}\quad\text{ for all }r\in(0,\bar\rho),
\]
with $C_g>0$ and $\lm>\sig$,
then for some $\gm>1$ depending only on the data,  $\al_*$ and $C_g$,
\[
\essosc_{Q_r(\om^{2-p})}u\le\gm\max\{1,\om\}\big(1-\ln\min\{1,\om\}\big)^{\frac{\sig}2}\Big(\ln\frac{\bar\rho}{r}\Big)^{-\frac{\sig}2}\quad\text{ for all }r\in(0,\bar\rho),
\]
where $Q_r(\om^{2-p})=K_r(x_o)\times(t_o-\om^{2-p}r^p,t_o)$.
\end{proposition}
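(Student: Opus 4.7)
The plan is to iterate a one-step reduction of oscillation, alternated according to a De Giorgi measure-theoretic dichotomy, over a geometric sequence of intrinsically scaled cylinders with common vertex $(x_o,t_o)\in S_T$. Fix $\mu^{\pm}$ and $\om$ as in Section~\ref{S:DG} with $Q_{R,S}$ replaced by $\widetilde Q_o$. After possibly switching to $-u$, arrange so that the level $k=\mu^-+\xi\om$ with a small $\xi\in(0,1)$ to be chosen satisfies the admissibility condition \eqref{Eq:k-restriction}; the hypothesis on $\boldsymbol\om_g$ guarantees that at every scale $\rho$, either $\om$ itself is already dominated by $\boldsymbol\om_g(\rho)$ (in which case the claimed decay is immediate), or $\om\gtrsim\boldsymbol\om_g(\rho)$ and the admissibility of $k$ propagates to the next step.

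The one-step oscillation reduction splits into the two branches signalled in Remark~\ref{Rmk:2:1}. On $Q_\rho(\theta)$ with $\theta=(\xi\om)^{2-p}$, if the measure hypothesis of Lemma~\ref{Lm:DG:1} or of Lemma~\ref{Lm:DG:2} is satisfied, then degeneracy of $\Dl_p$ dominates the singularity of $\be$, and one obtains $u\ge \mu^-+\tfrac14\xi\om$ a.e.\ in $Q_{\rho/2}(\theta)$, which yields $\essosc_{Q_{\rho/2}(\theta)}u\le (1-\tfrac14\xi)\om$ with $\xi$ determined by the data and $\al_*$. Otherwise, the failure of both measure conditions produces a quantitative lower bound on $|[u\le 0]\cap Q_\rho(\theta)|$ which, when inserted on the left of the energy estimate in Proposition~\ref{Prop:2:3} against the singular term $k\int\nu\chi_{[u\le 0]}\z^p\,\dx$, forces the use of the longer time scaling $\widetilde\theta=(\dl\xi\om)^{1-p}$. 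In this second branch, the refined shrinking-measure lemma (Lemma~\ref{Lm:shrink:1}) combined with the expansion-of-positivity machinery delivers $u\ge \mu^-+\eta\,\om$ a.e.\ on $Q_{\rho/2}(\widetilde\theta)$, with the decisive polynomial dependence $\eta\gtrsim\om^q$ in place of the exponentially small $2^{-1/\om^q}$ of the classical scheme. It is this polynomial dependence that upgrades the oscillation recurrence from Type~I to Type~II in the terminology of the introduction.

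Combining both branches and absorbing the Dirichlet contribution $\gm\,\boldsymbol\om_g(\rho)$, a single step yields $\om_1\le (1-c\,\om^q)\om+\gm\,\boldsymbol\om_g(\rho)$ on a sub-cylinder of intrinsic radius $\kp\rho$ for a fixed $\kp\in(0,1)$ depending on the data. Iterating along $\rho_n=\kp^n\bar\rho$ produces a sequence $\{\om_n\}$ obeying a Type~II recurrence; the assumption $\lm>\sig$ is precisely what keeps the forcing term $C_g|\ln\rho_n|^{-\lm}$ below the interior decay rate $n^{-\sig}$. Converting $n$ into $r$ via $r\approx\kp^n\bar\rho$ gives $\om_n\lesssim (\ln(\bar\rho/r))^{-\sig}$, and the factors $\max\{1,\om\}$ and $(1-\ln\min\{1,\om\})^{\sig/2}$ in the statement arise from reconciling the intrinsic scaling $Q_r(\om^{2-p})$ with the actual $(\xi\om)^{2-p}$ used during the iteration and from patching the regimes $\om\le 1$ and $\om>1$. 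The main obstacle is the singular branch: balancing the longer scaling $\widetilde\theta$ against the singular lower bound on $|[u\le 0]|$ and securing the improved polynomial dependence of the expansion-of-positivity constant $\eta$ on $\om$, since it is this polynomial-in-$\om$ gain that ultimately distinguishes the \eqref{Eq:modulus:2}-type rate from the slower \eqref{Eq:modulus:1}-type decay.
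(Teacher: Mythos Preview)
Your outline captures the correct dichotomy (Lemmas~\ref{Lm:DG:1}--\ref{Lm:DG:2} versus the singular branch via Lemma~\ref{Lm:shrink:1}) and correctly identifies the polynomial dependence $\eta\approx\om^q$ as the engine behind the Type~II recurrence. However, the iteration step contains a genuine gap.

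You assert that the iteration proceeds along $\rho_n=\kp^n\bar\rho$ with a \emph{fixed} $\kp\in(0,1)$, and then convert $n$ into $r$ via $n\approx\ln(\bar\rho/r)$. This is not possible here. In order to restart the argument at step $n+1$ one needs the set inclusion $Q_{8\rho_{n+1}}(\widetilde\theta_{n+1})\subset Q_{\frac14\rho_n}(\theta_n)$, and since $\widetilde\theta_{n+1}=(\bar\xi\xi\om_{n+1}^{1+q})^{1-p}$ blows up as $\om_{n+1}\to0$, the admissible shrinking ratio is forced to be $\om$-dependent: one obtains $\rho_{n+1}^p=\widetilde\xi\,\om_n^{\bar q}\rho_n^p$ with $\bar q=1+(p-1)q$. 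Using the crude bound $\om_n\ge\om/2^n$ and telescoping gives
\[
\rho_{n+1}^p\ge\widetilde\xi^{\,n+1}\Big(\frac{\om^n}{2^{n(n+1)/2}}\Big)^{\bar q}\rho^p,
\]
and after taking logarithms this yields $n^2\gtrsim\big(1-\ln\min\{1,\om\}\big)^{-1}\ln(\rho/r)$, so that $n$ scales like $\sqrt{\ln(\rho/r)}$, not $\ln(\rho/r)$. This is precisely why the final exponent is $-\tfrac{\sig}{2}$ rather than $-\sig$, and why the factor $\big(1-\ln\min\{1,\om\}\big)^{\sig/2}$ appears in the statement: they are structural consequences of the $\om$-dependent intrinsic scaling, not cosmetic bookkeeping for ``reconciling'' scalings or ``patching regimes''. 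With a fixed $\kp$ you would actually be claiming a stronger bound than the proposition asserts, and the set inclusions needed to iterate would fail.

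A smaller omission: under \eqref{Eq:alternative}$_2$ you still need to treat separately the case $|\mu^-|>\tfrac12\dl\xi\om$, in which the level $k=\mu^-+\bar\dl\xi\om$ need not be nonnegative and Proposition~\ref{Prop:2:3} is unavailable; the paper handles this in Section~\ref{S:reduc-osc-4} by noting that $(u-k)_-$ then avoids the singularity of $\be$ entirely and falls under the parabolic $p$-Laplacian theory of Appendix~\ref{A:1} (Lemma~\ref{Lm:reduc-osc-2}).
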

The proof of Proposition~\ref{Prop:bdry:D} will be expanded on in Sections~\ref{S:reduc-osc-1} -- \ref{S:bdry-modulus}.
To start with, we introduce the following intrinsic cylinders 
\begin{equation*}
\left\{
\begin{array}{ll}
Q_{\rho}(\theta)=K_\rho(x_o)\times(t_o-\theta\rho^p,t_o),\quad\theta=(\xi\om)^{2-p},\\[5pt]
Q_{\rho}(\widetilde\theta)=K_\rho(x_o)\times(t_o-\widetilde\theta\rho^p,t_o),\quad \widetilde\theta=(\dl\xi\om)^{1-p},
\end{array}\right.
\end{equation*}
where $\dl=\bar\xi\om^q$ for some positive $\bar\xi$ and $q$, which together with the positive $\xi$  are 
 to be determined in terms of the data and $\al_*$. 
We may assume with no loss of generality that
\begin{equation}\label{Eq:start-cylinder}
\widetilde\theta(8\rho)^p\le\rho^{p-1}, \quad\text{ such that }\quad\essosc_{Q_{8\rho}(\widetilde{\theta})}u\le \om,
\end{equation}
since otherwise we would have
\begin{equation}\label{Eq:rho-control}
8^{\frac{p}{1-p}}\xi\bar\xi\om^{1+q}\le\rho^{\frac1{p-1}}.
\end{equation}
Moreover, one of the following must hold:
\begin{equation}\label{Eq:mu-pm}
\mu^+-\tfrac14\om\ge\tfrac14\om\quad\text{ or }\quad \mu^-+\tfrac14\om\le-\tfrac14\om.
\end{equation}
Let us take  for instance the first one, i.e. \eqref{Eq:mu-pm}$_1$, as the other one can be treated analogously.

Next, we observe that one of the following two cases must hold:
\begin{equation}\label{Eq:alternative}
\mu^+-\tfrac14\om\ge\sup_{\widetilde{Q}_o\cap S_T}g\quad\text{ or }\quad \mu^-+\tfrac14\om\le\inf_{\widetilde{Q}_o\cap S_T}g.
\end{equation}
Otherwise, the violation of both inequalities implies that 
\begin{equation}\label{Eq:g-control}
\om\le 2\osc_{\widetilde{Q}_o\cap S_T}g.
\end{equation}

If \eqref{Eq:alternative}$_1$ holds, then thanks to \eqref{Eq:mu-pm}$_1$, the function $(u-k)_+$ with $k=\mu^+-2^{-n}\om$ will
satisfy the energy estimate in Proposition~\ref{Prop:2:2} for all $n\ge2$.
In this case, the reduction of oscillation is achieved in Section~\ref{S:reduc-osc-4}, cf.~Lemma~\ref{Lm:reduc-osc-1}.
Consequently we need only to examine the case when \eqref{Eq:alternative}$_2$ holds.
This case splits into two sub-cases.
In Sections~\ref{S:reduc-osc-1} -- \ref{S:reduc-osc-3}  we deal with the situation when $|\mu^-|$
is near zero; whereas the opposite situation is treated in Section~\ref{S:reduc-osc-4}, cf.~Lemma~\ref{Lm:reduc-osc-2}. 

In summary, Sections~\ref{S:reduc-osc-1} -- \ref{S:reduc-osc-3} will be devoted to reducing 
the oscillation of $u$ near the set $[u\approx0]$, where the singularity of $\be(u)$ appears;
Section~\ref{S:reduc-osc-4} treats the cases when the singularity of $\be(u)$ is avoided,
and hence the behavior of $u$ resembles the one to the parabolic $p$-Laplacian \eqref{Eq:p-Laplace}
and the theory in \cite{DB} applies.

%
\subsection{Reduction of oscillation near zero}\label{S:reduc-osc-1}
Let us deal with the case when \eqref{Eq:alternative}$_2$ holds. Throughout Sections~\ref{S:reduc-osc-1} -- \ref{S:reduc-osc-3}, 
we always assume that
\begin{equation}\label{Eq:mu-minus}
|\mu^-|\le\tfrac12\dl\xi\om\quad\text{ where }\dl=\bar\xi\om^q,
\end{equation} 
for some positive parameters $\{\xi,\,\bar\xi,\, q\}$ to be determined in terms of the data and $\al_*$ only.
When this restriction \eqref{Eq:mu-minus} does not hold, the case will be examined in Section~\ref{S:reduc-osc-4}.
In Sections~\ref{S:reduc-osc-1} -- \ref{S:reduc-osc-3}, 
we will work with $u$ as a super-solution and reduce its oscillation near its infimum $\mu^-$
 in a smaller, quantified cylinder with the same vertex $(x_o,t_o)$. 

To this end, we turn our attention to
Lemma~\ref{Lm:DG:1} and Lemma~\ref{Lm:DG:2}. 
Suppose $\xi$ is determined in Lemma~\ref{Lm:DG:2} in terms of the data and $\al_*$,
assume $\xi<\frac14$ with no loss of generality, and let $\theta=(\xi\om)^{2-p}$.
If
\[
|[u\le\mu^-+\tfrac12\xi\om]\cap Q_{\frac12\rho}(\theta)|\le c_o(\xi\om)^{\frac{N+p}p}|Q_{\frac12\rho}(\theta)|,
\]
then by Lemma~\ref{Lm:DG:1} we have
\begin{equation}\label{Eq:lower-bd-1}
u\ge \mu^-+\tfrac14\xi\om\quad\text{ a.e. in }Q_{\frac14\rho}(\theta).
\end{equation}
Likewise, if 
\begin{equation}\label{Eq:measure-2}
\iint_{Q_{\rho}(\theta)}\nu\chi_{[u\le0]}\,\dx\dt\le\xi\om|[u\le \mu^-+\tfrac12\xi\om]\cap Q_{\frac12\rho}(\theta)|
\end{equation}
then by Lemma~\ref{Lm:DG:2} we have
\begin{equation}\label{Eq:lower-bd-2}
u\ge \mu^-+\tfrac14\xi\om\quad\text{ a.e. in }Q_{\frac12\rho}(\theta),
\end{equation}
since $|\mu^-|\le\xi\om$ always holds true in this section. 
Hence either \eqref{Eq:lower-bd-1} or \eqref{Eq:lower-bd-2} yields a reduction of oscillation
\begin{equation}\label{Eq:reduc-osc-1}
\essosc_{Q_{\frac14\rho}(\theta)}u\le(1-\tfrac14\xi)\om.
\end{equation}
\subsection{Reduction of oscillation near zero continued}\label{S:reduc-osc-2}
In this section, we continue to examine the situation when the measure condition in Lemma~\ref{Lm:DG:1} is violated:
\begin{equation}\label{Eq:opposite:1}
|[u\le\mu^-+\tfrac12\xi\om]\cap Q_{\frac12\rho}(\theta)|> c_o(\xi\om)^{\frac{N+p}p}|Q_{\frac12\rho}(\theta)|,
\end{equation}
and when the condition in Lemma~\ref{Lm:DG:2} is violated:
\begin{equation}\label{Eq:opposite:2}
 \iint_{Q_{\rho}(\theta)}\nu\chi_{[u\le0]}\,\dx\dt>\xi\om|[u\le \mu^-+\tfrac12\xi\om]\cap Q_{\frac12\rho}(\theta)|.
\end{equation}

Combining \eqref{Eq:opposite:1} and \eqref{Eq:opposite:2}, we may obtain that for all $r\in[2\rho, 8\rho]$,
\begin{equation}\label{Eq:opposite:3}
\begin{aligned}
\iint_{Q_{r}(\theta)}\nu\chi_{[u\le0]}\,\dx\dt&\ge\iint_{Q_{\rho}(\theta)}\nu\chi_{[u\le0]}\,\dx\dt\\
&\ge\xi\om|[u\le \mu^-+\tfrac12\xi\om]\cap Q_{\frac12\rho}(\theta)|\\
&\ge c_o(\xi\om)^b|Q_{\frac12\rho}(\theta)|\ge\widetilde{\gm}(\xi\om)^b|Q_r(\theta)|,
\end{aligned}
\end{equation}
where $\widetilde{\gm}=c_o16^{-N-p}$ and
$b=1+\tfrac{N+p}p$.

To proceed, let $\bar\dl\in(\dl,1)$ be a free parameter and set $\bar\theta:=(\bar\dl\xi\om)^{1-p}$.
Recall that  $\widetilde{\theta}=(\dl\xi\om)^{1-p}$, $\theta=(\xi\om)^{2-p}$,
and  we have assumed $\widetilde{\theta}(8\rho)^p<\rho^{p-1}$ in \eqref{Eq:start-cylinder}.
Therefore 
\[
Q_r(\theta)\subset Q_r(\bar{\theta})\subset Q_r(\widetilde{\theta})\subset \widetilde{Q}_o
\quad\text{ for any } r\in[2\rho, 8\rho].
\]
The estimate \eqref{Eq:opposite:3} implies that there exists $t_*\in [t_o-\theta r^p,t_o]$,
such that
\begin{equation}\label{Eq:time:1}
\int_{K_{r}\times\{t_*\}}\nu\chi_{[u\le0]}\,\dx\ge\widetilde{\gm}(\xi\om)^b|K_r|.
\end{equation}
 Observe also that for any $t\in[t_o-\bar{\theta} r^p, t_o]$ and any $\bar\dl\in(\dl,1)$, there holds
 \begin{equation}\label{Eq:time:2}
\begin{aligned}
|K_r|&\ge|[u\le\mu^-+\bar\dl\xi\om]\cap K_r|\\
&\ge(\bar\dl\xi\om)^{-2}\int_{K_r\times\{t\}}[u-(\mu^-+\bar\dl\xi\om)]_-^2\,\dx.
\end{aligned}
\end{equation}
Combining \eqref{Eq:time:1} and \eqref{Eq:time:2} gives that
\begin{equation*}
\begin{aligned}
&\essup_{t_o-\bar{\theta} r^p<t<t_o} \int_{K_{r}\times\{t\}}\nu\chi_{[u\le0]}\,\dx\\
&\quad\ge \widetilde{\gm}(\xi\om)^b
(\bar\dl\xi\om)^{-2}\essup_{t_o-\bar{\theta} r^p<t<t_o}\int_{K_r\times\{t\}}[u-(\mu^-+\bar\dl\xi\om)]_-^2\,\dx.
\end{aligned}
\end{equation*}
The above analysis together with Proposition~\ref{Prop:2:3}
yields the following  energy estimate.
\begin{lemma}\label{Lm:energy}
Let $u$ be a  weak super-solution to \eqref{Dirichlet} with \eqref{Eq:1:2} in $E_T$.
Let \eqref{Eq:mu-pm}$_1$, \eqref{Eq:alternative}$_2$, \eqref{Eq:opposite:1} and \eqref{Eq:opposite:2} hold true.
Denoting $b:=1+\tfrac{N+p}p$ and 
setting $k=\mu^-+\bar\dl\xi\om$ with $\bar\dl\in(\dl,1)$, there exists a positive constant $\gm$
depending only on the data, such that  for all $\sig\in(0,1)$ and all $r\in[2\rho, 8\rho]$ we have
\begin{align*}
k(\bar\dl&\xi\om)^{-2}(\xi\om)^b\essup_{t_o-\bar{\theta}(\sig r)^p<t<t_o}\int_{K_{\sig r}\times\{t\}}(u-k)_-^2\,\dx\\
&\quad+\iint_{Q_{\sig r}(\bar{\theta})}|D(u-k)_-|^p\,\dx\dt\\
&\le\frac{\gm}{(1-\sig)^p r^p}\iint_{Q_{r}(\bar{\theta})}
		(u-k)^{p}_- \dx\dt\\
		&\quad+ \frac{\gm}{(1-\sig) \bar{\theta} r^p}\iint_{Q_{r}(\bar{\theta})}(u-k)_-^2\,\dx\dt\\
		&\quad+\frac{\gm\nu}{(1-\sig) \bar{\theta} r^p}\iint_{Q_{r}(\bar{\theta})}(u-k)_-\,\dx\dt,
\end{align*}
provided that
\[
|\mu^-|\le\dl\xi\om.
\]
\end{lemma}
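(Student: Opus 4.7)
The plan is to derive the claimed inequality as a direct consequence of Proposition~\ref{Prop:2:3} applied to $(u-k)_-$ on $Q_r(\bar\theta)$ with the level $k=\mu^-+\bar\delta\xi\om$, by trading the term $k\!\int\!\nu\chi_{[u\le 0]}\zeta^p\,\dx$ that sits on its left-hand side for a multiple of $\int(u-k)_-^2\,\dx$ via the lower bound \eqref{Eq:time:1}--\eqref{Eq:time:2}. First I would verify admissibility of the level. The hypothesis $|\mu^-|\le\delta\xi\om$ together with $\bar\delta>\delta$ gives $k\ge(\bar\delta-\delta)\xi\om\ge 0$; and assuming $\xi\le\tfrac14$ (which may be built into the choice of $\xi$ from Lemma~\ref{Lm:DG:2}) yields $k\le\mu^-+\tfrac14\om\le\inf_{\widetilde Q_o\cap S_T}g$ by \eqref{Eq:alternative}$_2$, so the restriction \eqref{Eq:k-restriction} for super-solutions is satisfied.

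Next I would pick a piecewise smooth cutoff $\zeta$ vanishing on $\pl_{\mathcal P}Q_r(\bar\theta)$, equal to one on $Q_{\sig r}(\bar\theta)$, with $|D\zeta|\le\gm/[(1-\sig)r]$ and $|\pl_t\zeta^p|\le\gm/[(1-\sig)\bar\theta r^p]$, and apply Proposition~\ref{Prop:2:3}. Inserting these cutoff bounds converts the three integrals on the RHS of that proposition into precisely the three integrals on the RHS of the lemma; in the boundary term one uses that $k-u=(u-k)_-$ on $[u\le 0]$ whenever $k\ge 0$, so $\nu\chi_{[u\le 0]}(k-u)\le\nu(u-k)_-$. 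Discarding the nonnegative quantity $\essup_t\!\int\!\zeta^p(u-k)_-^2\,\dx$ from the left-hand side leaves $k\essup_t\!\int\!\nu\chi_{[u\le 0]}\zeta^p\,\dx$, together with the gradient term which, since $\zeta\equiv 1$ on $Q_{\sig r}(\bar\theta)$, dominates $\iint_{Q_{\sig r}(\bar\theta)}|D(u-k)_-|^p\,\dx\dt$. Replaying the derivation of \eqref{Eq:time:1}--\eqref{Eq:time:2} with $K_r$ replaced by $K_{\sig r}$ produces
\begin{equation*}
\essup_{t_o-\bar\theta(\sig r)^p<t<t_o}\!\int_{K_{\sig r}\times\{t\}}\!\nu\chi_{[u\le 0]}\,\dx\ge\widetilde\gm(\xi\om)^b(\bar\delta\xi\om)^{-2}\essup_{t_o-\bar\theta(\sig r)^p<t<t_o}\!\int_{K_{\sig r}\times\{t\}}(u-k)_-^2\,\dx,
\end{equation*}
and multiplication by $k$ exposes the claimed prefactor $k(\bar\delta\xi\om)^{-2}(\xi\om)^b$ in front of the quadratic supremum.

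The main obstacle is the geometric bookkeeping in the last step. The estimate \eqref{Eq:opposite:3} was established for $r\in[2\rho,8\rho]$, and re-running it on $K_{\sig r}$ requires the inclusion $Q_\rho(\theta)\subset Q_{\sig r}(\bar\theta)$ together with a uniform lower bound on the ratio $|Q_{\rho/2}(\theta)|/|Q_{\sig r}(\bar\theta)|$. The inclusion holds because $\bar\theta\ge\theta$ (a consequence of $\bar\delta<1$, $p\ge 2$, and $\xi\om\le 1$), while the volume ratio is bounded below by a constant depending only on the data; any residual dependence on $\sig$ is absorbed into the final $\gm$ without spoiling the $(1-\sig)^{-p}$ dependence that sits on the right-hand side.
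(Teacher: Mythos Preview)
Your approach is essentially the paper's own: apply Proposition~\ref{Prop:2:3} on $Q_r(\bar\theta)$ with a standard cutoff $\zeta$ equal to $1$ on $Q_{\sigma r}(\bar\theta)$, discard the quadratic term $\int\zeta^p(u-k)_-^2$ from the left, and trade the remaining term $k\int\nu\chi_{[u\le 0]}\zeta^p$ for a multiple of $\int(u-k)_-^2$ via the chain \eqref{Eq:opposite:3}--\eqref{Eq:time:2}. The paper records the key lower bound on $K_r$ rather than $K_{\sigma r}$ and then simply states that it, together with Proposition~\ref{Prop:2:3}, ``yields the following energy estimate''; your write-up fills in the cutoff bookkeeping that the paper leaves implicit.

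One correction to your last paragraph: the inclusion $Q_\rho(\theta)\subset Q_{\sigma r}(\bar\theta)$ does \emph{not} follow from $\bar\theta\ge\theta$ alone. The spatial part $K_\rho\subset K_{\sigma r}$ requires $\sigma r\ge\rho$, and for $r\in[2\rho,8\rho]$ this forces $\sigma\ge\tfrac12$ in the worst case. Consequently the argument as written covers only $\sigma$ bounded away from zero, and one cannot ``absorb the residual dependence on $\sigma$ into $\gm$'' while keeping $\gm$ dependent on the data alone, as the lemma asserts. The paper is equally silent on this point; since the energy estimate is invoked only with $\sigma\ge\tfrac12$ in the proofs of Lemmas~\ref{Lm:shrink:1} and~\ref{Lm:DG:3}, the restriction is harmless in practice, but your justification for the inclusion should be amended accordingly.
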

\begin{remark}\upshape
Note that the restriction of $|\mu^-|$ in the last line of Lemma~\ref{Lm:energy}
has been imposed to ensure that the level $k\ge0$ as required in Proposition~\ref{Prop:2:3}.
Recall that such a restriction has been always assumed in this section, cf.~\eqref{Eq:mu-minus}.
Note carefully that the number $\bar\dl\in(\dl,1)$ in Lemma~\ref{Lm:energy} is a free parameter,
whereas $\dl$ is yet to be determined.
\end{remark}
\begin{remark}\label{Rmk:3:2}\upshape
When we generate the energy estimate of Lemma~\ref{Lm:energy},
the first term of the energy estimate in Proposition~\ref{Prop:2:3} has been discarded. 
Hence the singular effect of $\be(\cdot)$ overtakes
the degeneracy brought by the $p$-Laplacian in Lemma~\ref{Lm:energy}. The machinery of De Giorgi has to be reproduced
to reflect this change. This is the reason why the time scaling will be changed from $\theta=(\xi\om)^{2-p}$
to $\widetilde{\theta}=(\dl\xi\om)^{1-p}$ in the following two lemmas.
\end{remark}

Based on the energy estimate in Lemma~\ref{Lm:energy}, 
we then show that the measure density of the set $[u\approx\mu^-]$ can be made arbitrarily small in a way that favors
a later application for our purpose. 
The geometric condition \eqref{geometry}
of the boundary $\pl E$ comes into play.
\begin{lemma}\label{Lm:shrink:1}
Suppose the hypotheses in Lemma~\ref{Lm:energy} hold.
There exists a positive constant $j_*$ depending only on the data and $\al_*$, such that
for any $m\in\nn$ we have either
\[
|\mu^-|>\frac{\xi\om}{2^{mj_*}},
\]
or
\begin{equation*}
	\Big|\Big[
	u\le\mu^-+\frac{\xi \om}{2^{mj_*}}\Big]\cap Q_\rho(\widetilde{\theta})\Big|
	\le\frac1{2^m}|Q_\rho(\widetilde{\theta})|,\quad\text{ where }\widetilde{\theta}=\Big(\frac{\xi\om}{2^{mj_*}}\Big)^{1-p}.
\end{equation*}
\end{lemma}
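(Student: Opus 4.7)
My plan would be to follow the refined measure-shrinking scheme of \cite[Section~4.4]{DBGV-09} and \cite[Proposition~5.1]{DBG-16}, iterating De~Giorgi's inequality on a geometric sequence of levels approaching $\mu^-$ from above. The point is that this scheme produces an almost-linear-in-$m$ decay of the measure of level sets, rather than the slower geometric reduction of a single De~Giorgi iteration.

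If $|\mu^-|>\xi\om/2^{mj_*}$ there is nothing to prove; otherwise, every level
\[
k_j:=\mu^-+\frac{\xi\om}{2^j},\qquad 1\le j\le mj_*,
\]
is nonnegative, so Lemma~\ref{Lm:energy} applies to $(u-k_j)_-$ with the free parameter $\bar\dl=2^{-j}$ on the cylinder $Q_\rho(\bar\theta_j)$, $\bar\theta_j:=(\xi\om/2^j)^{1-p}$. This yields an $L^p$ gradient bound for $(u-k_j)_-$ after I absorb the singular latent-heat remainder on the right-hand side of the energy estimate by invoking the regime hypotheses \eqref{Eq:opposite:2}--\eqref{Eq:opposite:3}. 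Since $\bar\theta_j\le\widetilde\theta$, all these gradient bounds are valid on sub-cylinders of the single ambient cylinder $Q_\rho(\widetilde\theta)$.

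The second ingredient is a sliced De~Giorgi isoperimetric inequality between consecutive levels $k_j>k_{j+1}$. The measure density \eqref{geometry} combined with the fact that the extended function $u^-_{k_j}$ equals $k_j$ on $K_\rho(x_o)\setminus E$ ensures that $|K_\rho\setminus A_j(t)|\ge\al_*|K_\rho|$ for every time slice $t$, where $A_j(t):=[u(\cdot,t)<k_j]\cap K_\rho$. A standard isoperimetric inequality then gives, for each $t$,
\[
\tfrac{\xi\om}{2^{j+1}}|A_{j+1}(t)|\le\frac{\gm\rho}{\al_*}\int_{A_j(t)\setminus A_{j+1}(t)}|Du|\,\dx.
\]
Raising to the power $p/(p-1)$, integrating in $t$ over $Q_\rho(\widetilde\theta)$, and applying H\"older together with the gradient bound derived above produces a recursive inequality
\[
|A_{j+1}|^{\frac{p}{p-1}}\le C(\text{data},\al_*)\,|Q_\rho(\widetilde\theta)|^{\frac{1}{p-1}}\big(|A_j|-|A_{j+1}|\big),
\]
where now $A_j:=[u<k_j]\cap Q_\rho(\widetilde\theta)$.

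Summing this inequality over $j=1,\ldots,mj_*-1$ telescopes the right-hand side to something controlled by $C|Q_\rho(\widetilde\theta)|^{p/(p-1)}$, while monotonicity of $|A_j|$ yields
\[
(mj_*-1)|A_{mj_*}|^{\frac{p}{p-1}}\le C\,|Q_\rho(\widetilde\theta)|^{\frac{p}{p-1}},
\]
and $j_*$ large enough in terms of the data and $\al_*$ delivers $|A_{mj_*}|\le 2^{-m}|Q_\rho(\widetilde\theta)|$, which is the second alternative. The main obstacle I foresee is the mismatch between the $j$-dependent intrinsic time scale $\bar\theta_j$ at which the energy estimate of Lemma~\ref{Lm:energy} naturally lives and the fixed ambient cylinder $Q_\rho(\widetilde\theta)$: keeping track of the constants during the transfer requires using $\bar\theta_j\le\widetilde\theta$ together with the monotonicity of the level sets. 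The parallel delicate point is the careful absorption of the latent-heat term $\gm\nu\,\bar\theta^{-1}r^{-p}\iint(u-k)_-\,\dx\dt$ on the right-hand side of Lemma~\ref{Lm:energy}, for which the very measure information \eqref{Eq:opposite:3} that characterises this regime is essential.
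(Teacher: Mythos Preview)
Your proposal contains a genuine gap at the final step. The direct summation over all $mj_*$ consecutive levels yields only
\[
(mj_*-1)\,|A_{mj_*}|^{\frac{p}{p-1}}\le C\,|Q_\rho(\widetilde\theta)|^{\frac{p}{p-1}},
\]
which gives $|A_{mj_*}|\le C'(mj_*)^{-\frac{p-1}{p}}|Q_\rho(\widetilde\theta)|$, a \emph{polynomial} decay in $m$. To conclude $|A_{mj_*}|\le 2^{-m}|Q_\rho(\widetilde\theta)|$ you would need $(mj_*)^{\frac{p-1}{p}}\ge C'\,2^{m}$, i.e.\ $j_*\gtrsim 2^{mp/(p-1)}/m$. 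No fixed choice of $j_*$ depending only on the data and $\al_*$ can achieve this for all $m\in\nn$.

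The paper repairs this by iterating your summation \emph{blockwise}. One first sums over $j=0,\dots,j_*-1$ to obtain
\[
|A_{j_*,\rho}|\le \frac{\gm}{j_*^{(p-1)/p}}\,|A_{0,2\rho}|^{1/p}|A_{0,\rho}|^{(p-1)/p},
\]
and in parallel the same estimate with $\rho$ replaced by $2\rho$ (using radii $2\rho$ and $4\rho$). Then one repeats the entire argument on the next block $j=j_*,\dots,2j_*-1$, now with $A_{j_*,\cdot}$ playing the role of $A_{0,\cdot}$. After $m$ such blocks the factors compound multiplicatively to give
\[
|A_{mj_*,\rho}|\le\Big(\frac{\gm\,4^{N+2}}{j_*^{(p-1)/p}}\Big)^{m}|Q_\rho(\widetilde\theta)|,
\]
and one then fixes $j_*$ once and for all so that the base of the exponent is at most $\tfrac12$. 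The crucial point you are missing is that in the recursion $|A_{j+1}|^{p/(p-1)}\le \gm\,|A_{\text{start},2\rho}|^{1/(p-1)}\big(|A_j|-|A_{j+1}|\big)$, the prefactor $|A_{\text{start},2\rho}|^{1/(p-1)}$ should not be crudely bounded by $|Q_\rho(\widetilde\theta)|^{1/(p-1)}$ but instead reset at the beginning of each block to the (already shrunk) level-set measure from the previous block; this is what produces geometric decay. Your concern about the $j$-dependent time scale is, by contrast, not a real obstacle: the paper simply works in the fixed cylinder $Q_\rho(\widetilde\theta)$ throughout and uses $\widetilde\theta^{-1}(\xi\om/2^j)^{1-p}=(2^j/2^{mj_*})^{p-1}\le1$ for every $j\le mj_*$ to absorb the time-derivative and latent-heat terms in Lemma~\ref{Lm:energy}.
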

\begin{proof}
We employ the energy estimate of Lemma~\ref{Lm:energy} in $Q_{2\rho}(\widetilde{\theta})$
with $\sig=\tfrac12$ and the levels
\begin{equation*}
	k_j:=\mu^-+\frac{\xi \om}{2^{j}},\quad j=0,1,\cdots, j_*.
\end{equation*}
Restricting $|\mu^-|\le 2^{-j_*}\xi\om$ implies that $k_j\ge0$ for all $j=0,1,\cdots, j_*$.
Therefore, assuming $j_*$ has been chosen and $m$ is fixed for now, and taking into account the definition of $\widetilde{\theta}(j_*, m)$, the energy estimate in Lemma~\ref{Lm:energy} yields that
\begin{equation*}
\begin{aligned}
	\iint_{Q_\rho(\widetilde{\theta})}|D(u-k_j)_-|^p\,\dx\dt&\le\frac{\gm}{\varrho^p}\bigg(\frac{\xi \om}{2^j}\bigg)^p\bigg[1+\frac1{\widetilde{\theta}}\bigg(\frac{\xi\om}{2^j}\bigg)^{1-p}\bigg] |A_{j,2\rho}|\\
	&\le\frac{\gm}{\varrho^p}\bigg(\frac{\xi \om}{2^j}\bigg)^p|A_{j,2\rho}|,
\end{aligned}
\end{equation*}
where $ A_{j,2\rho}:= \big[u<k_{j}\big]\cap Q_{2\varrho}(\widetilde{\theta})$.
Next, we apply \cite[Chapter I, Lemma 2.2]{DB} slicewise 
to $u(\cdot,t)$ for 
$t\in(-\widetilde{\theta}\rho^p,0]$
 over the cube $K_\varrho$,
for levels $k_{j+1}<k_{j}$.
Taking into account the measure theoretical information derived from \eqref{geometry}:
\begin{equation*}
	\Big|\Big[u(\cdot, t)>\mu^-+\xi \om\Big]\cap K_{\varrho}\Big|\ge\al_* |K_\varrho|
	\qquad\mbox{for all $t\in(-\widetilde{\theta}\rho^p,0]$,}
\end{equation*}
this gives
\begin{align*}
	&(k_j-k_{j+1})\big|\big[u(\cdot, t)<k_{j+1} \big]
	\cap K_{\varrho}\big|
	\\
	&\qquad\le
	\frac{\gm \varrho^{N+1}}{\big|\big[u(\cdot, t)>k_{j}\big]\cap K_{\varrho}\big|}	
	\int_{\{ k_{j+1}<u(\cdot,t)<k_{j}\} \cap K_{\varrho}}|Du(\cdot,t)|\,\dx\\
	&\qquad\le
	\frac{\gm\varrho}{\al_*}
	\bigg[\int_{\{k_{j+1}<u(\cdot,t)<k_{j}\}\cap K_{\varrho}}|Du(\cdot,t)|^p\,\dx\bigg]^{\frac1p}
	\big|\big[ k_{j+1}<u(\cdot,t)<k_{j}\big]\cap K_{\varrho}\big|^{1-\frac1p}
	\\
	&\qquad=
	\frac{ \gm\varrho}{\al_*}
	\bigg[\int_{K_{\varrho}}|D(u-k_j)_-(\cdot,t)|^p\,\dx\bigg]^{\frac1p}
	\big[ |A_{j,\rho}(t)|-|A_{j+1,\rho}(t)|\big]^{1-\frac1p}.
\end{align*}
Here we used in the last line the  notation $ A_{j,\rho}(t):= \big[u(\cdot,t)<k_{j}\big]\cap K_{\varrho}$.
We now integrate the last inequality with respect to $t$ over  $(-\widetilde{\theta}\rho^p,0]$ and apply H\"older's inequality in time.
With the abbreviation $A_{j,\rho}=[u<k_j]\cap  Q_\rho(\widetilde{\theta})$ this procedure leads to
\begin{align*}
	\frac{\xi \om}{2^{j+1}}\big|A_{j+1,\rho}\big|
	&\le
	\frac{\gm\varrho}{\al_*}\bigg[\iint_{Q_\rho(\widetilde{\theta})}|D(u-k_j)_-|^p\,\dx\dt\bigg]^\frac1p
	\big[|A_{j,\rho}|-|A_{j+1,\rho}|\big]^{1-\frac{1}p}\\
	&\le
	\gm \frac{\xi \om}{2^j}|A_{o,2\rho}|^{\frac1p}\big[|A_{j,\rho}|-|A_{j+1,\rho}|\big]^{1-\frac{1}p}.
\end{align*}
 Here the constant $\gm$ in the last line has absorbed $\alpha_*$. 
 
Now take the power $\frac{p}{p-1}$ on both sides of the above inequality to obtain
\[
	\big|A_{j+1,\rho}\big|^{\frac{p}{p-1}}
	\le
	\gm|A_{o,2\rho}|^{\frac1{p-1}}\big[|A_{j,\rho}|-|A_{j+1,\rho}|\big].
\]
Add these inequalities from $0$ to $j_*-1$ to obtain
\[
	j_* |A_{j_*,\rho}|^{\frac{p}{p-1}}\le\gm|A_{o,2\rho}|^{\frac1{p-1}}|A_{o,\rho}|.
\]
From this we conclude
\[
	|A_{j_*,\rho}|\le\frac{\gm}{j_*^{\frac{p-1}p}}|A_{o,2\rho}|^{\frac1{p}}|A_{o,\rho}|^{\frac{p-1}p}.
\]
Similarly, replacing $\rho$ by $2\rho$, we have
\[
	|A_{j_*,2\rho}|\le\frac{\gm}{j_*^{\frac{p-1}p}}|A_{o,4\rho}|^{\frac1{p}}|A_{o,2\rho}|^{\frac{p-1}p}.
\]
The constant $\gm(\text{data}, \al_*)$ appearing in the above two inequalities may be different,
but we can take the larger one.

Suppose $j_*$ has been chosen for the moment.
We may repeat the above arguments for the same $k_j$ with $j=j_*,\cdots, 2j_*$.
Restricting $|\mu^-|\le 2^{-2j_*}\xi\om$ implies $k_j\ge0$.
Hence the energy estimate can be written in the same form with such choices of $k_j$,
and the geometry condition \eqref{geometry} will allow us to apply \cite[Chapter I, Lemma 2.2]{DB} just like above.
Consequently this will lead us to
\begin{align*}
	&|A_{2j_*,\rho}|\le\frac{\gm}{j_*^{\frac{p-1}p}}|A_{j_*,2\rho}|^{\frac1{p}}|A_{j_*,\rho}|^{\frac{p-1}p},\\
	&|A_{2j_*,2\rho}|\le\frac{\gm}{j_*^{\frac{p-1}p}}|A_{j_*,4\rho}|^{\frac1{p}}|A_{j_*,2\rho}|^{\frac{p-1}p}.
\end{align*}
Here the constant $\gm$ is the same as above.

Combining the above estimates would yield that
\[
|A_{2j_*,\rho}|\le\frac{\gm^2 4^{2(N+2)}}{j_*^{2\frac{p-1}p}}|Q_\rho(\widetilde{\theta})|.
\]
Iterating the procedure $m$ times would give us that
\[
|A_{mj_*,\rho}|\le\frac{\gm^m 4^{m(N+2)}}{j_*^{m\frac{p-1}p}}|Q_\rho(\widetilde{\theta})|.
\]
Finally we may choose $j_*$ so large that
\[
\frac{\gm 4^{(N+2)}}{j_*^{\frac{p-1}p}}\le\frac12.
\]
The proof is completed.
\end{proof}
Next we apply the energy estimate in Lemma~\ref{Lm:energy} to show a De Giorgi type lemma.
Notice that the time scaling used here is different from the one in Lemmas~\ref{Lm:DG:1} -- \ref{Lm:DG:2};
recall Remark~\ref{Rmk:2:1} and Remark~\ref{Rmk:3:2} for an explanation.
\begin{lemma}\label{Lm:DG:3}
Suppose the hypotheses in Lemma~\ref{Lm:energy} hold.
Let $\dl\in(0,1)$. There exists a  constant $c_1\in(0,1)$ depending only on the data, such that
if
\[
\Big|\Big[u<\mu^-+2\dl\xi\om\Big]\cap Q_{4\rho}(\widetilde{\theta})\Big|\le c_1 (\xi\om)^{b}|Q_{4\rho}(\widetilde{\theta})|\quad\text{ where }\widetilde{\theta}=(\dl\xi\om)^{1-p},
\]
then
either $|\mu^-|>\tfrac12\dl\xi\om$ or
\[
u\ge\mu^-+\dl\xi\om\quad\text{ a.e. in }Q_{2\rho}(\widetilde{\theta}).
\]
\end{lemma}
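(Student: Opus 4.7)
The plan is to run a parabolic De~Giorgi iteration on a sequence of nested cylinders shrinking from $Q_{4\rho}(\widetilde\theta)$ down to $Q_{2\rho}(\widetilde\theta)$, driving the truncation levels from $\mu^- + 2\dl\xi\om$ down to $\mu^- + \dl\xi\om$. First, if $|\mu^-|>\tfrac12\dl\xi\om$ the desired alternative already holds, so I would assume $|\mu^-|\le\tfrac12\dl\xi\om$ throughout. Define
\[
k_n=\mu^-+\dl\xi\om+\frac{\dl\xi\om}{2^n},\qquad \tilde k_n=\tfrac12(k_n+k_{n+1}),\qquad \rho_n=2\rho+\frac{2\rho}{2^n},
\]
with $\tilde\rho_n=\tfrac12(\rho_n+\rho_{n+1})$ and $Q_n:=K_{\rho_n}(x_o)\times(t_o-\widetilde\theta\rho_n^p,t_o)$, analogously $\tilde Q_n$. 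Each $k_n\ge\tfrac12\dl\xi\om\ge 0$, and writing $\tilde k_n=\mu^-+\bar\dl_n\xi\om$ with $\bar\dl_n\in(\dl,2\dl)\subset(\dl,1)$ (we may suppose $\dl<\tfrac12$), the hypotheses of Lemma~\ref{Lm:energy} are met at every level.

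Next, I would take standard cutoffs $\z_n$ vanishing on $\pl_{\mathcal P}Q_n$, equal to $1$ on $\tilde Q_n$, with $|D\z_n|\le c\, 2^n/\rho$ and $|\pl_t\z_n^p|\le c\, 2^{pn}/(\widetilde\theta\rho^p)$. Since $(u-\tilde k_n)_-\le 2\dl\xi\om$ and $\widetilde\theta(\dl\xi\om)^{p-1}=1$, each of the three right-hand side terms in Lemma~\ref{Lm:energy} (with $r=\rho_n$ and $\sigma\rho_n=\tilde\rho_n$) is dominated by $C^n(\dl\xi\om)^p\rho^{-p}|A_n|$, where $A_n:=[u<k_n]\cap Q_n$. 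On the left, the bounds $\tilde k_n\ge\tfrac12\dl\xi\om$ and $\bar\dl_n\xi\om\le 2\dl\xi\om$ force the prefactor $\tilde k_n(\bar\dl_n\xi\om)^{-2}(\xi\om)^b$ of the supremum term to be at least $c(\xi\om)^{b-1}/\dl$, producing
\[
\frac{(\xi\om)^{b-1}}{\dl}\sup_{t}\int_{\tilde K_n}(u-\tilde k_n)_-^2\,\dx+\iint_{\tilde Q_n}|D(u-\tilde k_n)_-|^p\,\dx\dt\le\frac{C^n(\dl\xi\om)^p}{\rho^p}|A_n|.
\]

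Finally, with an auxiliary cutoff $\phi$ vanishing on $\pl_{\mathcal P}\tilde Q_n$ and equal to $1$ on $Q_{n+1}$, I would apply the parabolic Sobolev embedding \cite[Chapter~I, Proposition~3.1]{DB} to $v\phi$, where $v:=(u-\tilde k_n)_-$, and use $v\ge\dl\xi\om/2^{n+2}$ on $A_{n+1}$. With $|Q_n|\simeq(\dl\xi\om)^{1-p}\rho^{N+p}$, a direct calculation in which every power of $\dl$ cancels yields
\[
Y_{n+1}\le\frac{C^n}{(\xi\om)^{(N+2p)/N}}\,Y_n^{1+p/N},\qquad Y_n:=|A_n|/|Q_n|.
\]
Since $(N+2p)/p=b$, the standard fast-geometric-convergence lemma \cite[Chapter~I, Lemma~4.1]{DB} guarantees $Y_n\to 0$ as soon as $Y_0\le c_1(\xi\om)^b$, which is precisely the given hypothesis, whence $u\ge\mu^-+\dl\xi\om$ a.e.\ on $Q_{2\rho}(\widetilde\theta)$. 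The delicate point is the exact cancellation of $\dl$ between the weighted supremum prefactor $(\xi\om)^{b-1}/\dl$ supplied by the singular term in Lemma~\ref{Lm:energy} and the factor $(\dl\xi\om)^{1-p}$ hidden in $|Q_n|$; it is this balance — enforced by the scaling $\widetilde\theta=(\dl\xi\om)^{1-p}$ — that renders the smallness threshold $(\xi\om)^b$ free of $\dl$, as will be required when $\dl$ is calibrated in the oscillation-reduction step of Section~\ref{S:reduc-osc-2}.
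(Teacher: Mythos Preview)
Your proposal is correct and follows essentially the same De~Giorgi iteration as the paper: identical levels $k_n$, radii $\rho_n$, and cutoffs, with Lemma~\ref{Lm:energy} supplying the energy estimate and the parabolic Sobolev embedding closing the recursion. The only cosmetic difference is that the paper routes the Sobolev step through the $L^1$ norm of $(u-\tilde k_n)_-\phi$ and H\"older, obtaining the recurrence $Y_{n+1}\le \gamma C^n(\xi\om)^{-b/(N+2)}Y_n^{1+1/(N+2)}$, whereas you bound $\iint[(u-\tilde k_n)_-\phi]^{p(N+2)/N}$ directly to get $Y_{n+1}\le \gamma C^n(\xi\om)^{-(N+2p)/N}Y_n^{1+p/N}$; both are standard variants and yield the identical threshold $Y_0\le c_1(\xi\om)^b$, and your observation about the exact $\dl$-cancellation is precisely the structural point the paper exploits in Section~\ref{S:reduc-osc-3}.
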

\begin{proof}
In order to use the energy estimate in Lemma~\ref{Lm:energy}, we set as in \eqref{choices:k_n} for $n=0,1,\cdots,$
\begin{align*}
	\left\{
	\begin{array}{c}
	\displaystyle k_n=\mu^-+ \dl\xi\om+\frac{\dl\xi\om}{2^{n}},\quad \tilde{k}_n=\frac{k_n+k_{n+1}}2,\\[5pt]
	\displaystyle \varrho_n=2\varrho+\frac{\varrho}{2^{n-1}},
	\quad\tilde{\varrho}_n=\frac{\varrho_n+\varrho_{n+1}}2,\\[5pt]
	\displaystyle K_n=K_{\varrho_n},\quad \widetilde{K}_n=K_{\tilde{\varrho}_n},\\[5pt] 
	\displaystyle Q_n=Q_{\rho_n}(\widetilde{\theta}),\quad
	\widetilde{Q}_n=Q_{\tilde\rho_n}(\widetilde{\theta}).
	\end{array}
	\right.
\end{align*}
We will use the energy estimate in Lemma~\ref{Lm:energy} with the pair of cylinders $\widetilde{Q}_n\subset Q_n$.
Now the constant $\bar\dl$ in Lemma~\ref{Lm:energy} is replaced by $(1+2^{-n})\dl$ here as indicated in the definition of $k_n$.
In this setting, enforcing $|\mu^-|\le\tfrac12\dl\xi\om$, the energy estimate in Lemma~\ref{Lm:energy} may be rephrased as
\begin{align*}
	(\dl\xi\om)^{-1}(\xi\om)^b&\essup_{-\widetilde{\theta}\tilde{\varrho}_n^p<t<0}
	\int_{\widetilde{K}_n} (u-\tilde{k}_n)_-^2\,\dx
	+
	\iint_{\widetilde{Q}_n}|D(u-\tilde{k}_n)_-|^p \,\dx\dt\\
	&\qquad\le
	 \gm \frac{2^{pn}}{\varrho^p}(\dl\xi\om)^{p}|A_n|,
\end{align*}
where
\begin{equation*}
	A_n=\big[u<k_n\big]\cap Q_n.
\end{equation*}
Now setting $0\le\phi\le1$ to be a cutoff function which vanishes on the parabolic boundary of $\widetilde{Q}_n$
and equals the identity in $Q_{n+1}$, an application of the H\"older inequality  and the Sobolev imbedding
\cite[Chapter I, Proposition~3.1]{DB} gives that
\begin{align*}
	\frac{\dl\xi\om}{2^{n+3}}
	|A_{n+1}|
	&\le 
	\iint_{\widetilde{Q}_n}\big(u-\tilde{k}_n\big)_-\phi\,\dx\dt\\
	&\le
	\bigg[\iint_{\widetilde{Q}_n}\big[\big(u-\tilde{k}_n\big)_-\phi\big]^{p\frac{N+2}{N}}
	\,\dx\dt\bigg]^{\frac{N}{p(N+2)}}|A_n|^{1-\frac{N}{p(N+2)}}\\
	&\le \gm
	\bigg[\iint_{\tilde{Q}_n}\big|D\big[(u-\tilde{k}_n)_-\phi\big]\big|^p\,
	\dx\dt\bigg]^{\frac{N}{p(N+2)}}\\
	&\quad\ 
	\times\bigg[\essup_{-\widetilde{\theta}\tilde{\varrho}_n^p<t<0}
	\int_{\widetilde{K}_n}\big(u-\tilde{k}_n\big)^{2}_-\,\dx\bigg]^{\frac{1}{N+2}}
	 |A_n|^{1-\frac{N}{p(N+2)}}\\
	&\le 
	\gm (\xi\om)^{-\frac{b}{N+2}}(\dl\xi\om)^{\frac1{N+2}}
	\bigg[(\dl\xi\om)^{p}\frac{2^{pn}}{\rho^p}\bigg]^{\frac{N+p}{p(N+2)}}
	|A_n|^{1+\frac{1}{N+2}}. 
\end{align*}
In the second-to-last line we used the above energy estimate.
In terms of $  Y_n=|A_n|/|Q_n|$, this can be rewritten as
\begin{equation*}
	 Y_{n+1}
	\le
	\frac{\gm C^n}{(\xi\om)^{\frac{b}{N+2}}} 
	Y_n^{1+\frac{1}{N+2}},
\end{equation*}
for a constant $\gm$ depending only on the data and with $C=C(N,p)$.
Hence, by \cite[Chapter I, Lemma~4.1]{DB}, 
there exists
a positive constant $c_1$ depending only on the data, such that
$Y_n\to0$ if we require that $Y_o\le c_1(\xi\om)^b$, which amounts to
\begin{equation*}
	|A_o|=\big|\big[u<k_o\big]\cap Q_o\big|
	= 
	\Big|\Big[u<\mu^-+2 \dl\xi\om\Big]\cap Q_{4\varrho}(\widetilde{\theta})\Big|
	\le
	c_1(\xi\om)^b\big| Q_{4\varrho}(\widetilde{\theta})\big|.
\end{equation*}
Since $Y_n\to 0$ as $n\to\infty$, we have
\begin{equation*}
	\Big|\Big[u<\mu^-+  \dl\xi\om \Big]\cap Q_{2 \varrho}(\widetilde{\theta})\Big|
	= 
	0.
\end{equation*}
This concludes the proof of the lemma.
\end{proof}
\subsection{Reduction of oscillation near zero concluded}\label{S:reduc-osc-3}
Under the conditions \eqref{Eq:mu-pm}$_1$, \eqref{Eq:alternative}$_2$, \eqref{Eq:opposite:1} and \eqref{Eq:opposite:2}$_1$ ,
we may reduce the oscillation in the following way. First of all,
let $\xi$ be determined in Lemma~\ref{Lm:DG:2} and $j_*$ be determined in Lemma~\ref{Lm:shrink:1}  in terms of the data
and $\al_*$.
We may assume without loss of generality that Lemma~\ref{Lm:shrink:1} holds true for $Q_{4\rho}(\widetilde{\theta})$
instead of $Q_{\rho}(\widetilde{\theta})$.
Then we choose the integer $m$ so large to satisfy that
\begin{equation}\label{Eq:choice-m}
\frac1{2^m}\le c_1(\xi\om)^b,
\end{equation}
where $c_1$ is the constant appearing in Lemma~\ref{Lm:DG:3}.

Next, we can fix $2\dl=2^{-mj_*}$ in Lemma~\ref{Lm:DG:3}.
Consequently, by the choice of $m$ in  \eqref{Eq:choice-m}, Lemma~\ref{Lm:DG:3} can be applied
assuming that $|\mu^-|\le\tfrac12\dl\xi\om$, and we arrive at
\[
u\ge\mu^-+ \dl\xi\om \equiv \mu^-+\eta(\xi\om)^{1+q}\quad\text{ a.e. in }Q_{2\rho}(\widetilde{\theta}),
\]
where
\[
\eta=\tfrac12 c_1^{j_*}\quad\text{ and }\quad q=bj_*.
\]
This would give us a reduction of oscillation
\begin{equation}\label{Eq:reduc-osc-2}
\essosc_{Q_{2\rho}(\widetilde{\theta})}u\le(1-\eta\om^q)\om
\end{equation}
with a properly redefined $\eta$. 
Attention is called to the choice of $\dl$ above: by  \eqref{Eq:choice-m}, 
we actually have chosen
\begin{equation*}
\dl=\tfrac12[c_1(\xi\om)^b]^{j_*}\equiv\bar{\xi}\om^q
\end{equation*}
where the positive constants $\bar\xi$ and $q$
are now quantitatively determined by the data and $\al_*$ only.

To summarize the achievements in Sections~\ref{S:reduc-osc-1} -- \ref{S:reduc-osc-3},  taking \eqref{Eq:rho-control},
\eqref{Eq:g-control}, \eqref{Eq:reduc-osc-1} and \eqref{Eq:reduc-osc-2} all into account,
 if \eqref{Eq:start-cylinder}, \eqref{Eq:alternative}$_2$  and $|\mu^-|\le\frac12\dl\xi\om$ hold true, then we have
\begin{equation*}
\essosc_{Q_{\frac14\rho}(\theta)}u\le (1-\eta\om^q)\om\quad
\text{ or }\quad \om\le  \max\Big\{A\rho^{\al_o},\, 2\osc_{\widetilde{Q}_o\cap S_T}g\Big\},
\end{equation*}
where 
\[
\theta=(\xi\om)^{2-p},\quad\al_o=[(p-1)(1+q)]^{-1},\quad A=(8^{\frac{p}{p-1}}\xi\bar\xi)^{-\frac1{1+q}}.
\]
All the constants $\{\xi,\bar\xi, \eta, q\}$ have been determined in terms of the data and $\al_*$.
\subsection{Reduction of oscillation away from zero}\label{S:reduc-osc-4}
In this section, we handle the case when the equation \eqref{Eq:1:1} resembles the parabolic $p$-Laplacian \eqref{Eq:p-Laplace}.
The reduction of oscillation is summarized in the following two lemmas; 
the proofs rely on the theory for the parabolic $p$-Laplacian \eqref{Eq:p-Laplace}
in \cite[Chapter~III, Section~12]{DB}, 
which we collect in Appendix~\ref{A:1} for completeness.
\begin{lemma}\label{Lm:reduc-osc-1}
Let $u$ be a  weak sub-solution to \eqref{Dirichlet} with \eqref{Eq:1:2} in $E_T$.
Suppose \eqref{Eq:alternative}$_1$ holds true and $\mu^+-\frac14\om\ge0$. 
There exist positive constants $\gm$  and $\xi_1$ depending on
the data and $\al_*$, such that for $\theta=(\xi_1\om)^{2-p}$ we have
\[
\essosc_{Q_{\frac14\rho}(\theta)}u\le (1-\tfrac12\xi_1)\om\quad
\text{ or }\quad \om\le  \max\Big\{A\rho^{\al_o},\, 2\osc_{\widetilde{Q}_o\cap S_T}g\Big\}.
\]
\end{lemma}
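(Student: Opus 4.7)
The plan is to reduce this lemma to the boundary reduction-of-oscillation machinery for the parabolic $p$-Laplace equation \eqref{Eq:p-Laplace}, collected in Appendix~\ref{A:1}. The two hypotheses are precisely what is needed to kill the singular contribution of $\be(\cdot)$: for every level $k\ge \mu^+-\tfrac14\om$ one has simultaneously $k\ge 0$ and $k\ge \sup_{\widetilde{Q}_o\cap S_T}g$, so the truncation $(u-k)_+$ has vanishing trace on $\widetilde{Q}_o\cap S_T$ and $\Phi_+$ disappears. Consequently the energy of $(u-k)_+$ is controlled by the singularity-free estimate of Proposition~\ref{Prop:2:2}, which is identical to the energy inequality for a weak sub-solution of \eqref{Eq:p-Laplace}. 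From this point on, $u$ behaves, near the lateral boundary and above the level $\mu^+-\tfrac14\om$, like a bounded weak sub-solution of the parabolic $p$-Laplacian.

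First I would dispose of the easy alternative. If the start-cylinder condition \eqref{Eq:start-cylinder} is violated, then \eqref{Eq:rho-control} yields $\om\le A\rho^{\al_o}$ directly, and we are done via the second branch of the conclusion. So I may assume \eqref{Eq:start-cylinder} holds, pick $\theta=(\xi_1\om)^{2-p}\le\widetilde{\theta}$ with $\xi_1\in(0,1)$ to be chosen, and work inside $Q_{8\rho}(\theta)\subset Q_{8\rho}(\widetilde{\theta})\subset\widetilde{Q}_o$, where the oscillation of $u$ is bounded by $\om$.

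Next I would run the standard boundary De~Giorgi alternative for the parabolic $p$-Laplacian with intrinsic scaling $\theta=(\xi_1\om)^{2-p}$. The extended function $u_k^+$ agrees with $u$ on $E_T$ and equals $k$ outside $E$, which is legitimate here because the Dirichlet condition forces $u\le g\le k$ on $\widetilde{Q}_o\cap S_T$. The measure density hypothesis \eqref{geometry} then delivers the slice-wise information
\[
\big|[u_k^+(\cdot,t)\le k]\cap K_\rho(x_o)\big|\ge \al_*|K_\rho|
\qquad\text{for every } t\in\big(t_o-\theta(8\rho)^p,t_o\big),
\]
which is precisely the input required by the boundary De~Giorgi lemma for \eqref{Eq:p-Laplace}. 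Feeding this measure information together with the energy estimate of Proposition~\ref{Prop:2:2} into the argument of \cite[Chapter~III, Section~12]{DB} (collected in Appendix~\ref{A:1}) produces $\xi_1=\xi_1(\text{data},\al_*)\in(0,1)$ such that
\[
\essup_{Q_{\frac14\rho}(\theta)} u\;\le\; \mu^+-\tfrac12\xi_1\om,
\]
and combining this with $\essinf_{Q_{\frac14\rho}(\theta)} u\ge \mu^-$ yields $\essosc_{Q_{\frac14\rho}(\theta)}u\le (1-\tfrac12\xi_1)\om$.

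The only genuine bookkeeping to watch is the compatibility of the intrinsic scaling with \eqref{geometry}: one must ensure that at every stage of the De~Giorgi iteration the spatial radius stays $\le\bar\rho$ so that \eqref{geometry} is applicable, and that the time extent $\theta(8\rho)^p$ of $Q_{8\rho}(\theta)$ sits inside $\widetilde{Q}_o$ so that the global oscillation bound $\om$ still controls $u$ throughout the iteration; both are guaranteed by \eqref{Eq:start-cylinder} together with the smallness of $\bar\rho$. Apart from this, the proof is a direct citation of the parabolic $p$-Laplacian boundary theory, which is the whole point of isolating the hypotheses \eqref{Eq:alternative}$_1$ and $\mu^+-\tfrac14\om\ge 0$.
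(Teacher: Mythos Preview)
Your proposal is correct and follows essentially the same approach as the paper: the hypotheses \eqref{Eq:alternative}$_1$ and $\mu^+-\tfrac14\om\ge0$ guarantee that for all relevant levels $k\ge\mu^+-\tfrac14\om$ the term $\Phi_+$ vanishes, so Proposition~\ref{Prop:2:2} furnishes the singularity-free energy estimate required by Proposition~\ref{Prop:A:1} (with $\om$ there replaced by $\tfrac14\om$), and the boundary theory for the parabolic $p$-Laplacian in Appendix~\ref{A:1} yields the reduction of oscillation. The paper's own proof is simply a terse pointer to this same mechanism.
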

Let $\xi$ and $\dl=\bar\xi\om^q$ be determined in Sections~\ref{S:reduc-osc-1} -- \ref{S:reduc-osc-3}. We now examine the case
when \eqref{Eq:mu-minus} does not hold.
\begin{lemma}\label{Lm:reduc-osc-2}
Let $u$ be a  weak super-solution to \eqref{Dirichlet} with \eqref{Eq:1:2} in $E_T$.
Suppose \eqref{Eq:alternative}$_2$ holds true and $|\mu^-|>\frac12\dl\xi\om$. 
There exist positive constants $\gm$  and $\xi_2$ depending on
the data and $\al_*$, such that for $ \theta=(\xi_2\dl\xi\om)^{2-p}$ we have
\[
\essosc_{Q_{\frac14\rho}( \theta)}u\le (1- \tfrac12\xi_2\dl\xi)\om\quad
\text{ or }\quad \om\le  \max\Big\{A\rho^{\al_o},\, 2\osc_{\widetilde{Q}_o\cap S_T}g\Big\}.
\]
\end{lemma}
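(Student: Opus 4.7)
The plan is to reduce the analysis to the theory for the parabolic $p$-Laplacian \eqref{Eq:p-Laplace}, whose boundary oscillation decay under \eqref{geometry} is collected in Appendix~\ref{A:1}. The hypothesis $|\mu^-|>\tfrac12\dl\xi\om$ is precisely what separates a quantified neighborhood of $\mu^-$ from the singular set $[u=0]$ of $\be(\cdot)$, so that truncations of $u$ near its infimum satisfy a pure $p$-Laplacian-type energy inequality; the change of scaling from the $\theta=(\xi\om)^{2-p}$ of Section~\ref{S:reduc-osc-1} to $\theta=(\xi_2\dl\xi\om)^{2-p}$ reflects the fact that the effective reduction amplitude is now only $\xi_2\dl\xi\om$.

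The first step is a case split based on the sign of $\mu^-$. If $\mu^-\ge\tfrac12\dl\xi\om>0$, then $u\ge\mu^->0$ throughout $\widetilde{Q}_o$, so $\be(u)=u$ identically and $u$ is itself a weak super-solution of \eqref{Eq:p-Laplace} there. If instead $\mu^-\le-\tfrac12\dl\xi\om<0$, I restrict attention to truncation levels $k\le\mu^-+\tfrac12\dl\xi\om<0$; on $\{u<k\}\subset\{u<0\}$ one has $\be(u)=u-\nu$, and direct inspection of $\Phi_-$ in Proposition~\ref{Prop:2:1} shows it vanishes because $\chi_{[u\le 0]}$ equals $1$ on the support and the resulting boundary and interior time integrals telescope. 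In both sub-cases the choice $\xi_2<\tfrac14$ together with \eqref{Eq:alternative}$_2$ renders levels of the form $k=\mu^-+c\xi_2\dl\xi\om$ (with $c\in[1,2]$) admissible in the sense of \eqref{Eq:k-restriction}, and one recovers the clean $p$-Laplacian energy estimate of Proposition~\ref{Prop:2:2} for $(u-k)_-$.

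From there I invoke the boundary reduction-of-oscillation machinery of Appendix~\ref{A:1} for super-solutions of \eqref{Eq:p-Laplace}, with oscillation parameter $\xi_2\dl\xi\om$ and intrinsic time scaling $\theta=(\xi_2\dl\xi\om)^{2-p}$. The measure density \eqref{geometry} together with \eqref{Eq:alternative}$_2$, which forces $u\ge g\ge\mu^-+\tfrac14\om$ on a quantitatively large piece of the lateral boundary inside $\widetilde{Q}_o$, drives the standard De Giorgi iteration, yielding either the pointwise bound $u\ge\mu^-+\tfrac12\xi_2\dl\xi\om$ a.e.\ in $Q_{\frac14\rho}(\theta)$, hence the asserted reduction of oscillation, or, if the comparison with $g$ ultimately fails, the alternative conclusion $\om\le\max\{A\rho^{\al_o},\,2\osc_{\widetilde{Q}_o\cap S_T}g\}$ exactly as at the end of Section~\ref{S:reduc-osc-3}. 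The main obstacle is verifying admissibility and the vanishing of $\Phi_-$ in the sub-case $\mu^->0$, since Proposition~\ref{Prop:2:2} is stated under $k\le 0$ for super-solutions; the fix is to return to Proposition~\ref{Prop:2:1} and observe that on $\{u>0\}$ the selection $v\in\be(u)$ is forced to equal $u$, so every singular contribution is identically zero in this sub-case as well, and the subsequent De Giorgi iteration is literally the one recorded in Appendix~\ref{A:1}.
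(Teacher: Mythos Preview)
Your proof is correct and follows essentially the same approach as the paper, which simply observes that the hypothesis $|\mu^-|>\tfrac12\dl\xi\om$ avoids the singularity of $\be(\cdot)$ so that the clean energy estimate of Proposition~\ref{Prop:A:1} holds for $(u-k)_-$ with $\om$ replaced by $\tfrac12\dl\xi\om$. Your explicit case split on the sign of $\mu^-$ and the verification that $\Phi_-$ vanishes in each sub-case is a faithful unpacking of that one-line justification.
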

Under the pointwise conditions of Lemmas~\ref{Lm:reduc-osc-1} -- \ref{Lm:reduc-osc-2},
the singularity of $\be(\cdot)$ is avoided, and
the energy estimates of Proposition~\ref{Prop:A:1} are verified for $(u-k)_+$
and for $(u-k)_-$, with $\om$ there replaced by $\frac14\om$ and
$\frac12\dl\xi\om$ respectively.
\subsection{Derivation of the modulus of continuity}\label{S:bdry-modulus}
This is the final part of the proof of Proposition~\ref{Prop:bdry:D}.
Recall that we have defined the quantities $\mu^\pm$ and $\om$
over $\widetilde{Q}_o=K_{8\rho}(x_o)\times(t_o-\rho^{p-1}, t_o)$ for some $\rho\in(0,\frac18\bar\rho)$, 
and we have performed the arguments in Sections~\ref{S:reduc-osc-1} -- \ref{S:reduc-osc-4} starting from
the intrinsic relation \eqref{Eq:start-cylinder}$_2$, that is,
\[
\essosc_{Q_{8\rho}(\widetilde{\theta})}u\le \om\quad\text{ where }\widetilde\theta=(\dl\xi\om)^{1-p}\text{ and } \dl=\bar\xi\om^q.
\]
The positive constants $\{\xi, \bar\xi, q\}$ have been selected in terms of the data and $\al_*$, in the course of proof in 
Sections~\ref{S:reduc-osc-1} -- \ref{S:reduc-osc-4}.
Therefore, combining all the arguments in Sections~\ref{S:reduc-osc-1} -- \ref{S:reduc-osc-4},
we arrive at the reduction of oscillation
\begin{equation}\label{Eq:induction-1}
\essosc_{Q_{\frac14\rho}(\theta)}u\le (1-\eta\om^q)\om\quad
\text{ or }\quad \om\le  \max\Big\{A\rho^{\al_o},\, 2\osc_{\widetilde{Q}_o\cap S_T}g\Big\},
\end{equation}
where $\theta=(\xi\om)^{2-p}$ and $\{\xi,\bar\xi, q, \al_o, \eta, A\}$  have been properly 
determined in terms of the data and $\al_*$.

In order to iterate the arguments of Sections~\ref{S:reduc-osc-1} -- \ref{S:reduc-osc-4}, 
we set 
\[
\om_1=\max\Big\{(1-\eta\om^q)\om,\, 2\osc_{\widetilde{Q}_o\cap S_T}g,\, A\rho^{\al_o}\Big\}.
\]
We need to choose $\rho_1>0$, such that
the following two set inclusions hold:
\[
Q_{8\rho_1}(\widetilde\theta_1)\subset Q_{\frac14\rho}(\theta),\quad Q_{8\rho_1}(\widetilde\theta_1)\subset \widetilde{Q}_o \quad\text{ where }\widetilde\theta_1=(\dl_1\xi\om_1)^{1-p}\text{ and } \dl_1=\bar\xi\om_1^q.
\]
Just like  \eqref{Eq:start-cylinder}, they will validate the intrinsic relation needed to repeat 
the arguments of Sections~\ref{S:reduc-osc-1} -- \ref{S:reduc-osc-4}.
To this end, we may assume that $\eta\om^q\le\frac12$ and then observe that 
\[
\widetilde\theta_1 (8\rho_1)^p\le[(\tfrac12\om)^{1+q}\xi\bar\xi]^{1-p}(8\rho_1)^p.
\]
Hence to satisfy the first set inclusion, it suffices to choose $\rho_1$ by setting
\[
[(\tfrac12\om)^{1+q}\xi\bar\xi]^{1-p}(8\rho_1)^p=(\tfrac14\rho)^p(\xi\om)^{2-p},\quad\text{ i.e. }
\quad \rho_1^p=\widetilde\xi\om^{\bar{q}}\rho^p, 
\]
where $\bar{q}:=1+(p-1)q$ and $\widetilde\xi$ depends only on the data and $\al_*$.
It is not hard to verify the second inclusion also with such a choice of $\rho_1$.
As a result of the set inclusions and \eqref{Eq:induction-1}, we obtain the following intrinsic relation
\[
\essosc_{Q_{8\rho_1}(\widetilde\theta_1)}u\le\om_1\quad\text{ where }\widetilde\theta_1=(\dl_1\xi\om_1)^{1-p}\text{ and } \dl_1=\bar\xi\om_1^q,
\]
which takes the place of \eqref{Eq:start-cylinder}$_2$ in the next stage.
Therefore the arguments of Sections~\ref{S:reduc-osc-1} -- \ref{S:reduc-osc-4} 
can be repeated within $Q_{8\rho_1}(\widetilde\theta_1)$, to obtain that either
\[
\essosc_{Q_{\frac14\rho_1}(\theta_1)}u\le (1-\eta\om_1^q)\om_1\quad
\text{ or }\quad \om_1\le  \max\Big\{A\rho_1^{\al_o},\, 2\osc_{\widetilde{Q}_1\cap S_T}g\Big\},
\]
where
\[
\widetilde{Q}_1=K_{8\rho_1}(x_o)\times(t_o-\rho_1^{p-1},t_o).
\]
Here the constants $\{\xi,\bar\xi, q, \al_o, \eta, A\}$ are the same as in the first step.

Now we may proceed by induction and introduce the following notations:
\begin{equation*}
\left\{
\begin{array}{cc}
\rho_o=\rho,\quad  \rho_{n+1}^p=\widetilde\xi\om_n^{\bar{q}}\rho_n^p,\quad\widetilde{\theta}_n=(\xi\bar\xi\om_n^{1+q})^{1-p},\,\quad\theta_n=(\xi\om_n)^{2-p}\\[5pt]
\dsty\om_o=\om,\quad \om_{n+1}=\max\Big\{\om_n(1-\eta\om^q_n),\,A\rho_n^{\al_o},\,2\osc_{\widetilde{Q}_n\cap S_T}g\Big\},\\[5pt]
\widetilde{Q}_n=K_{8\rho_n}(x_o)\times(t_o-\rho_n^{p-1},t_o),\quad Q_n=Q_{\frac14\rho_n}(\theta_n),\quad
Q'_n=Q_{8\rho_n}(\widetilde\theta_n).
\end{array}\right.
\end{equation*}
Upon using the induction we have that for all $n=0,1,\cdots$,
\begin{equation}\label{Eq:induction-n}
 Q'_{n+1}\subset Q_n\quad\text{ and }\quad\essosc_{Q_n} u\le \om_{n+1}.
\end{equation}


Next we derive an explicit modulus of continuity of $u$ inherent in \eqref{Eq:induction-n}. 
To this end, we first define a sequence $\{r_n\}$ and cylinders $\{\widehat{Q}_n\}$ by
\begin{equation*}
\begin{array}{cc}
r_o=1,\quad  r_{n+1}^p=\widetilde\xi r_n^p,\quad
\widehat{Q}_n=K_{8r_n}(x_o)\times(t_o-r_n^{p-1},t_o).
\end{array}
\end{equation*}
Notice that $\rho_n\le r_n$ and $\widetilde{Q}_n\subset\widehat{Q}_n$ for all $n\ge0$.
Observe that if there exists $n_o\in\nn$, such that a sequence $\{a_n\}$ satisfies
\[
a_{n+1}\ge\max\Big\{a_n(1-\eta a^q_n),\,A r_n^{\al_o},\,2\osc_{\widehat{Q}_n\cap S_T}g\Big\}
\]
 for all $n\ge n_o$ and $a_{n_o}\ge\om_{n_o}$,
then $a_n\ge \om_n$ for all $n\ge n_o$. As a matter of fact, we may take $a_n=(1+n)^{-\sig} \max\{1,\om\}$ with $\sig\in(0,\frac1q)$. 
In such a case,
the number $n_o$ is determined by the parameters $\{\eta, q, \sig, \al_o, A, \widetilde{\xi}, C_g,\lm\}$. 
Indeed, the following two assertions are verified by elementary computations:
\[
a_{n+1}\ge a_n(1-\eta a^q_n),\quad a_{n+1}\ge A r_n^{\be},
\]
for all $n\ge n_o$ and $n_o$ is determined by $\{\eta, q,\sig, \al_o, A\}$. Using the modulus of continuity of $g$,
there exists $\gm=\gm(\widetilde{\xi}, C_g,\lm)$, such that
\[
\osc_{\widehat{Q}_n\cap S_T}g\le \frac{\gm}{n^{\lm}}.
\]
Recall that $\lm>\sig$. Hence $n_o$ here can be determined in terms of $\{\widetilde{\xi}, C_g,\lm\}$ 
to ensure that for any $n\ge n_o$,
\[
a_{n+1}\ge\osc_{\widehat{Q}_n\cap S_T}g.
\] 
Once the number $n_o$ has been determined by the parameters $\{\eta, q, \sig, \al_o, A, \widetilde{\xi}, C_g,\lm\}$
 and independent of $\om$,
we may assume that 
$n_o=0$ for simplicity. As a result of $a_o\ge\om_o$, we arrive at
\[
\essosc_{Q_n} u\le \om_{n+1}\le\frac{\max\{1,\om\}}{(n+2)^\sig}\quad\text{ for all }n=0,1,\cdots.
\]

Let us take  a number $4r\in(0,\rho)$. 
There must be some $n$ such that
\[
\rho_{n+1}< 4r\le\rho_{n}.
\]
The right-hand side inequality will yield that $Q_r(\om^{2-p})\subset Q_{\frac14\rho_n}(\theta_n)=Q_n$, and consequently
\begin{equation}\label{Eq:osc-final-1}
\essosc_{Q_r(\om^{2-p})}u\le \essosc_{Q_n}\le\om_{n+1}\le\frac{\max\{1,\om\}}{(n+2)^\sig}.
\end{equation}
Next we analyze the left-hand side inequality.
First notice that by the definition of $\om_n$ and assuming $\eta\om_n^q\le\frac12$, we can estimate
\[
\om_n\ge\frac{\om}{2^n}.
\]
Use this to estimate $\rho_{n+1}^p$ from below by
\begin{align*}
(4r)^p&>\rho^p_{n+1}=\widetilde{\xi}\om_n^{\bar{q}}\rho^p_n\ge\widetilde{\xi}^2\Big(\frac{\om}{2^n}\Big)^{\bar{q}}\rho^p_{n}\ge\cdots\\
&\ge
\widetilde{\xi}^{n+1}\rho^p\prod_{i=1}^{n}\Big(\frac{\om}{2^i}\Big)^{\bar{q}}
=\widetilde{\xi}^{n+1}\Big(\frac{\om^n}{2^{\frac{n(n+1)}2}}\Big)^{\bar{q}}\rho^p.
\end{align*}
Then taking logarithim on both sides, a simple calculation will give us a lower bound of $n$ by
\[
n^2\ge\frac{\gm}{1-\ln\min\{1,\om\}}\ln\frac{\rho}{r},\quad\text{ for some }\gm=\gm(p,q,\widetilde{\xi}).
\]
Substituting this into \eqref{Eq:osc-final-1}, we obtain the modulus of continuity
\[
\essosc_{Q_r(\om^{2-p})}u\le\gm\max\{1,\om\}\big(1-\ln\min\{1,\om\}\big)^{\frac{\sig}2}\Big(\ln\frac{\rho}{r}\Big)^{-\frac{\sig}2}\quad\text{ for all }r\in(0,\rho).
\]
This concludes the proof of Proposition~\ref{Prop:bdry:D}
\section{Reduction of interior oscillation}\label{S:interior}
For  a cylinder $\widetilde{Q}_{o}=K_{\rho}(x_o)\times(t_o-\rho^{p-1},t_o)\subset E_T$ with $\rho\in(0,1)$,
we introduce the numbers $\mu^{\pm}$ and $\om$ satisfying
\begin{equation*}
	\mu^+\ge\essup_{\widetilde{Q}_{o}} u,
	\quad 
	\mu^-\le\essinf_{\widetilde{Q}_{o}} u,
	\quad
	\om\ge\mu^+ - \mu^-.
\end{equation*}
The main result of this section is the following.
\begin{proposition}\label{Prop:int-reg}
	Let $u$ be a locally bounded,  local weak solution to \eqref{Eq:1:1} with \eqref{Eq:1:2} in $E_T$.
Then there exist positive constants $\gm$, $\sig$ and $q$  depending only on the data, such that
\[
\essosc_{Q_r(\om^{(2-p)(1+q)})}u\le\gm \max\{1,\om\}\Big(\ln\frac{\rho}{r}\Big)^{-\sig}\quad\text{ for all }r\in(0,\rho).
\]
\end{proposition}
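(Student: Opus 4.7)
The plan is to mirror the argument of Section~\ref{S:bdry} in the interior, where the absence of boundary constraints permits a cleaner outcome. Fix an interior vertex $(x_o,t_o)$ and introduce the same intrinsic cylinders $Q_\rho(\theta)\subset Q_\rho(\widetilde\theta)\subset \widetilde Q_o$ with $\theta=(\xi\om)^{2-p}$, $\widetilde\theta=(\dl\xi\om)^{1-p}$ and $\dl=\bar\xi\om^q$, together with the starting intrinsic relation analogous to \eqref{Eq:start-cylinder} (whose failure already forces a polynomial lower bound on $\rho$). The dichotomy \eqref{Eq:mu-pm} applies and we may assume \eqref{Eq:mu-pm}$_1$.

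In the interior no level restriction \eqref{Eq:k-restriction} is needed since $\z$ can be taken compactly supported in $K_\rho(x_o)$, so Propositions~\ref{Prop:2:2}--\ref{Prop:2:3} admit direct interior analogues. The De Giorgi type Lemmas~\ref{Lm:DG:1} and \ref{Lm:DG:2} then carry over verbatim. The one step that genuinely uses the boundary geometry is the shrinking Lemma~\ref{Lm:shrink:1}, where \eqref{geometry} supplies the pointwise measure bound $|[u(\cdot,t)>\mu^-+\xi\om]\cap K_\rho|\ge\al_*|K_\rho|$ at every time slice. In the interior this is supplied instead by \eqref{Eq:mu-pm}$_1$: the failure of the first De Giorgi alternative yields a time $t_*$ at which $|[u(\cdot,t_*)>\mu^-+\xi\om]\cap K_\rho|\ge\widetilde\al|K_\rho|$ with $\widetilde\al$ an absolute constant, and this single-slice information can be propagated forward by a parabolic De Giorgi iteration to produce the required uniform-in-$t$ lower bound over an intrinsic time interval, which is exactly the interior expansion of positivity of \cite{DBGV-mono} with polynomial dependence $\eta\approx\al^q$. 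With this substitution, the case analysis of Sections~\ref{S:reduc-osc-1}--\ref{S:reduc-osc-4} delivers
\[
\essosc_{Q_{\rho/4}(\theta)}u\le(1-\eta\om^q)\om\quad\text{or}\quad\om\le A\rho^{\al_o},
\]
with $\eta,q,A,\al_o$ depending only on the data.

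From here I would iterate as in Section~\ref{S:bdry-modulus}, setting $\rho_{n+1}^p=\widetilde\xi\om_n^{\bar q}\rho_n^p$ and $\om_{n+1}=\max\{\om_n(1-\eta\om_n^q),\,A\rho_n^{\al_o}\}$. Since no boundary datum is present, the simpler majorant $a_n=(1+n)^{-\sig}\max\{1,\om\}$ with $\sig\in(0,1/q)$ dominates $\om_n$ from some $n_o$ depending only on the data, giving $\essosc_{Q_n}u\le C\max\{1,\om\}(n+2)^{-\sig}$. Translating back to $r\in(\rho_{n+1},\rho_n]$ via the telescoping product $\rho_n^p\approx\widetilde\xi^n\prod_{i=0}^{n-1}\om_i^{\bar q}\rho^p$ yields $n\gtrsim\ln(\rho/r)$; absorbing the combined intrinsic scale $\om^{(2-p)(1+q)}$ into the final cylinder $Q_r$ removes the extra $(1-\ln\min\{1,\om\})^{\sig/2}$-factor that appears in Proposition~\ref{Prop:bdry:D} and produces the claimed modulus.

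The main obstacle will be the substitute for the boundary measure condition in the shrinking argument: one must verify that the clustering and the slicewise application of \cite[Chapter I, Lemma 2.2]{DB} used inside Lemma~\ref{Lm:shrink:1} can be reorganized around a single selected time $t_*$ instead of the uniform-in-$t$ bound supplied by \eqref{geometry}, and then carefully propagate the resulting pointwise estimate forward on the intrinsic time scale $\widetilde\theta=(\dl\xi\om)^{1-p}$ without losing quantitative control of the constants $\eta$ and $q$ that enter the Type~II recurrence.
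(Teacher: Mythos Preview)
Your plan has a genuine gap at exactly the point you flag as the main obstacle, and the paper resolves it by a different route rather than by patching Lemma~\ref{Lm:shrink:1}.

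First, the measure information you claim is stated in the wrong direction. The failure of the De Giorgi alternative near $\mu^-$ says that $|[u\le\mu^-+\xi\om]\cap Q_\rho(\theta)|$ is \emph{large}; it gives no lower bound on $|[u(\cdot,t)>\mu^-+\xi\om]\cap K_\rho|$ at any time. So there is no $t_*$ with the claimed property $|[u(\cdot,t_*)>\mu^-+\xi\om]\cap K_\rho|\ge\widetilde\al|K_\rho|$. Second, even if you had such a slice, you could not invoke the expansion of positivity of \cite{DBGV-mono} for the super-solution side near $\mu^-$: the truncations $(u-k)_-$ with $k$ near $\mu^-$ cross the set $[u=0]$ where $\beta$ is singular, so $u-\mu^-$ is \emph{not} a super-solution to the parabolic $p$-Laplacian and the machinery of \cite[Chapter~5]{DBGV-mono} does not apply. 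Hence your proposed reorganization of Lemma~\ref{Lm:shrink:1} around a single time slice cannot be carried out as written.

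The paper abandons the shrinking lemma entirely in the interior. The key observation is Lemma~\ref{Lm:sub-solution}: for any $k>0$ the truncation $(u-k)_+$ \emph{is} a weak sub-solution to the parabolic $p$-Laplacian \eqref{Eq:p-Laplace}, because on $[u>k>0]$ one has $\beta(u)=u$. Under \eqref{Eq:mu-pm}$_1$ this makes $v:=\tfrac14\om-(u-(\mu^+-\tfrac14\om))_+$ a genuine non-negative $p$-super-solution, so the expansion of positivity of \cite[Chapter~5, Proposition~7.1]{DBGV-mono} applies directly (Lemma~\ref{Lm:expansion:p}). The argument then stacks $L\approx\om^{-a}$ congruent sub-cylinders vertically, runs the dichotomy in the bottom one, and in the second alternative switches to the $\mu^+$ side to invoke Lemma~\ref{Lm:expansion:p}; in the first alternative a single application of Lemma~\ref{Lm:DG:int} followed by forward propagation (Lemma~\ref{Lm:DG:initial:1}) suffices. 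This yields the Type~II recurrence with a purely geometric choice $\rho_{n+1}=c\rho_n$, so $n\approx\ln(\rho/r)$ directly, which is why the interior statement carries the exponent $-\sig$ rather than $-\sig/2$ and no $(1-\ln\min\{1,\om\})$ factor. Your iteration $\rho_{n+1}^p=\widetilde\xi\om_n^{\bar q}\rho_n^p$ would, as in Section~\ref{S:bdry-modulus}, only give $n^2\gtrsim\ln(\rho/r)$ and would not recover the stated modulus.
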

\subsection{Preliminaries}
We first collect a set of lemmas from which the proof of Proposition~\ref{Prop:int-reg} will follow.
Let us remark that the energy estimate in Proposition~\ref{Prop:2:1} still holds true in the interior with arbitrary $k\in\rr$.
Keeping this remark in mind, we present the following De Giorgi type lemma similar to Lemma~\ref{Lm:DG:1}.

Let $\theta>0$ be  a parameter to be determined.
The cylinder $Q_\rho(\theta)$ is coaxial with $\widetilde{Q}_{o}$ and with the same vertex $(x_o,t_o)$;
moreover, we will impose that 
\[
\theta\rho^p<\rho^{p-1},\quad\text{ such that }\quad Q_\rho(\theta) \subset \widetilde{Q}_{o}.
\]
Then we have the following.

\begin{lemma}\label{Lm:DG:int}
Let $u$ be a  local weak super-solution to \eqref{Eq:1:1} with \eqref{Eq:1:2} in $E_T$. For $\xi\in(0,1)$,
set $\theta=(\xi\om)^{2-p}$ and assume that $Q_{\rho}(\theta)\subset E_T$. 
There exists a positive constant $c_o$ depending only on the data, such that if
\[
|[u\le\mu^-+\xi\om]\cap Q_{\rho}(\theta)|\le c_o(\xi\om)^{\frac{N+p}p}|Q_{\rho}(\theta)|,
\]
then
\[
u\ge \mu^-+\tfrac12\xi\om\quad\text{ a.e. in }Q_{\frac12\rho}(\theta).
\]
\end{lemma}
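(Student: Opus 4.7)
\smallskip

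The plan is to mimic the De~Giorgi iteration in the proof of Lemma~\ref{Lm:DG:1}, adapted to the interior setting where $(u-k)_-$ need not vanish on the lateral boundary. Since $\xi\in(0,1)$ and $p\ge 2$, the exponent $(\xi\om)^{p-1}$ will be the scaling-breaking factor brought by the singularity of $\be(\cdot)$; exactly as in Lemma~\ref{Lm:DG:1}, the intrinsic time scaling $\theta=(\xi\om)^{2-p}$ is introduced to absorb it.

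First, I would fix the geometric setup: with $(x_o,t_o)=(0,0)$, set
\[
k_n=\mu^-+\tfrac12\xi\om+\frac{\xi\om}{2^{n+1}},\quad \tilde k_n=\tfrac12(k_n+k_{n+1}),\quad \rho_n=\tfrac12\rho+\frac{\rho}{2^{n+1}},\quad \tilde\rho_n=\tfrac12(\rho_n+\rho_{n+1}),
\]
and the corresponding cylinders $Q_n=Q_{\rho_n}(\theta)\supset\widetilde Q_n=Q_{\tilde\rho_n}(\theta)$ together with a piecewise smooth cutoff $\z_n$ vanishing on $\pl_{\mathcal P}Q_n$, equal to $1$ on $\widetilde Q_n$, with $|D\z_n|^p\lesssim 2^{pn}/\rho^p$ and $|\pl_t\z_n^p|\lesssim 2^{pn}/(\theta\rho^p)$. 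Testing the super-solution version of the interior energy estimate (Proposition~\ref{Prop:2:1}), I combine Propositions~\ref{Prop:2:2} and \ref{Prop:2:3} to cover both signs of $k_n$: when $k_n\le 0$ the $\Phi_-$ contribution vanishes and we obtain the clean Caccioppoli; when $k_n\ge 0$ the extra term $\nu\iint\chi_{[u\le 0]}(k_n-u)|\pl_t\z^p|$ is controlled by $\nu\,\xi\om\cdot 2^{pn}/(\theta\rho^p)|A_n|$, and with $\theta=(\xi\om)^{2-p}$ this equals $\gm(\xi\om)^{p-1}2^{pn}\rho^{-p}|A_n|$. The remaining Caccioppoli terms $(u-\tilde k_n)_-^p|D\z|^p$ and $(u-\tilde k_n)_-^2|\pl_t\z^p|$ are both of order $(\xi\om)^p 2^{pn}\rho^{-p}|A_n|$, hence dominated by the singular contribution. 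In summary, with $A_n:=[u<k_n]\cap Q_n$,
\[
\essup_{-\theta\tilde\rho_n^p<t<0}\int_{\widetilde K_n}(u-\tilde k_n)_-^2\,\dx+\iint_{\widetilde Q_n}|D(u-\tilde k_n)_-|^p\,\dx\dt
\le \gm\,\frac{2^{pn}}{\rho^p}(\xi\om)^{p-1}|A_n|.
\]

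Next, insert a further cutoff $\phi$ supported in $\widetilde Q_n$ and equal to $1$ on $Q_{n+1}$, and estimate
\[
\tfrac{\xi\om}{2^{n+3}}|A_{n+1}|\le\iint_{\widetilde Q_n}(u-\tilde k_n)_-\phi\,\dx\dt
\]
by H\"older's inequality and the parabolic Sobolev imbedding \cite[Chap.~I, Prop.~3.1]{DB}, obtaining the standard $L^{p(N+2)/N}$--$L^\infty L^2$ interpolation. Plugging in the energy bound above and setting $Y_n:=|A_n|/|Q_n|$ yields the iterative inequality
\[
Y_{n+1}\le \frac{\gm\,C^n}{(\xi\om)^{\frac{N+p}{p(N+2)}}}\,Y_n^{1+\frac{1}{N+2}},
\]
with $C=C(N,p)$. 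A direct application of \cite[Chap.~I, Lemma~4.1]{DB} gives $Y_n\to 0$ provided $Y_0\le c_o(\xi\om)^{(N+p)/p}$ for some $c_o=c_o(\text{data})$, which is exactly the hypothesis of the lemma.

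The arguments are almost verbatim those of Lemma~\ref{Lm:DG:1}; the only genuine subtlety, and what I expect to be the main (minor) obstacle, is the treatment of the sign of $k_n$ when $\mu^-<0<\mu^-+\tfrac12\xi\om$. In that regime the sequence $\{k_n\}$ crosses zero, so Propositions~\ref{Prop:2:2} and \ref{Prop:2:3} must be invoked on a term-by-term basis along the iteration. Fortunately the singular contribution from Proposition~\ref{Prop:2:3} already bounds the (cleaner) estimate from Proposition~\ref{Prop:2:2}, so a single unified inequality with the factor $(\xi\om)^{p-1}$ serves uniformly in $n$, and the De~Giorgi iteration proceeds without modification.
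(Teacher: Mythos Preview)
Your proposal is correct and follows essentially the same route as the paper, which simply remarks that the proof of Lemma~\ref{Lm:DG:1} runs almost verbatim in the interior (now that the restriction \eqref{Eq:k-restriction} on $k$ is no longer needed). Your explicit handling of the sign of $k_n$ via Propositions~\ref{Prop:2:2}--\ref{Prop:2:3} and the observation that the singular contribution $(\xi\om)^{p-1}$ dominates the standard Caccioppoli terms is exactly the mechanism implicit in the paper's phrase ``Propositions~\ref{Prop:2:2}--\ref{Prop:2:3} combined'' in the proof of Lemma~\ref{Lm:DG:1}.
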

\begin{proof}
This is exactly Lemma~\ref{Lm:DG:1} in the interior. The proof runs almost verbatim. 
\end{proof}
The next lemma is a variant of the previous one, involving quantitative initial data.
\begin{lemma}\label{Lm:DG:initial:1}
Let $u$ be a  local weak super-solution to \eqref{Eq:1:1} with \eqref{Eq:1:2} in $E_T$. For $\xi\in(0,1)$,
set $\theta=(\xi\om)^{2-p}$. 
There exists a positive constant $\gm_o$ depending only on the data, such that if
\[
u(\cdot, t_o)\ge\mu^-+ \xi\om,\quad\text{ a.e. in } K_{\rho}(x_o),
\]
then
\[
u\ge\mu^-+\tfrac12\xi\om\quad\text{ a.e. in }K_{\frac12\rho}(x_o)\times(t_o,t_o+\gm_o\theta\rho^p),
\]
provided the cylinders are included in $\widetilde{Q}_o$.
\end{lemma}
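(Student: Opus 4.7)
The plan is to run a forward De~Giorgi iteration on cylinders based at $(x_o,t_o)$, mimicking the proof of Lemma~\ref{Lm:DG:int} but replacing the measure--smallness hypothesis of that lemma by the pointwise initial bound $u(\cdot,t_o)\ge\mu^-+\xi\om$. Concretely I would set
\[
k_n=\mu^-+\tfrac12\xi\om+\frac{\xi\om}{2^{n+1}},\qquad \rho_n=\frac{\rho}{2}+\frac{\rho}{2^{n+1}},\qquad Q_n=K_{\rho_n}(x_o)\times\big(t_o,\,t_o+\gm_o\theta\rho^p\big),
\]
and choose a family of \emph{time--independent} cutoffs $\z_n(x)$ vanishing on $\pl K_{\rho_n}$, identically $1$ on a slightly smaller cube, with $|D\z_n|\lesssim 2^n/\rho$.

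The choice of time--independent cutoffs is what makes the argument work. In the energy estimate of Proposition~\ref{Prop:2:1} applied on $Q_n$, the term $(u-k_n)_-^2|\pl_t\z_n^p|$ vanishes because $\pl_t\z_n\equiv 0$, and the initial trace $\int\z_n^p(u-k_n)_-^2\,\dx$ at $t=t_o$ vanishes because $u(\cdot,t_o)\ge\mu^-+\xi\om\ge k_n$ forces $(u(\cdot,t_o)-k_n)_-\equiv 0$. The singular contribution $\Phi_-$ is dealt with by a case split on the sign of $k_n$: when $k_n\le 0$ the setting of Proposition~\ref{Prop:2:2} applies directly and $\Phi_-$ vanishes; when $k_n\ge 0$ the computation of Proposition~\ref{Prop:2:3}, combined with $\pl_t\z_n^p=0$, produces only a non--negative extra contribution $k_n\int\nu\chi_{[u\le 0]}\z_n^p\,\dx$ on the left--hand side which we may simply discard. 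In either regime one arrives at the clean bound
\[
\essup_{t_o<t<t_o+\gm_o\theta\rho^p}\int\z_n^p(u-k_n)_-^2\,\dx+\iint_{Q_n}\z_n^p|D(u-k_n)_-|^p\,\dx\dt\le\gm\frac{2^{pn}}{\rho^p}(\xi\om)^p|A_n|,
\]
where $A_n=[u<k_n]\cap Q_n$.

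Combining this bound with the parabolic Sobolev embedding of \cite[Chapter~I, Proposition~3.1]{DB} (via a secondary spatial cutoff $\phi$ between $Q_{n+1}$ and $Q_n$) leads to the recurrence
\[
Y_{n+1}\le\gm C^n\gm_o^{p/N}Y_n^{1+p/N},\qquad Y_n:=\frac{|A_n|}{|Q_n|}.
\]
The key cancellation is that the residual factor $(\xi\om)^{p(p-2)/N}$ produced by the Sobolev step is precisely compensated when normalizing by $|Q_n|\sim\gm_o(\xi\om)^{2-p}\rho^{N+p}$; no singular $\xi\om$--dependence survives, in sharp contrast with the measure condition in Lemma~\ref{Lm:DG:int}. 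An application of \cite[Chapter~I, Lemma~4.1]{DB} then gives $Y_n\to 0$ as soon as $Y_0\le c\,\gm_o^{-1}$ for an absolute constant $c=c(\text{data})$. Since $Y_0\le 1$ trivially, this condition is secured by fixing $\gm_o$ small enough depending only on the data, and the conclusion $u\ge\mu^-+\tfrac12\xi\om$ a.e.\ in $K_{\rho/2}(x_o)\times(t_o,t_o+\gm_o\theta\rho^p)$ follows.

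The main subtlety I anticipate lies in the clean handling of $\Phi_-$ across the threshold $k_n=0$: depending on the sign of $\mu^-+\tfrac12\xi\om$, some levels $k_n$ may be negative while others positive, so the two regimes must be glued consistently without producing $n$--dependent constants that would contaminate the recurrence. One also has to verify that discarding the non--negative term $k_n\int\nu\chi_{[u\le 0]}\z_n^p\,\dx$ in the $k_n\ge 0$ branch does not weaken the $L^\infty_tL^2_x$ control needed for the Sobolev step. Once this bookkeeping is in place, the iteration itself proceeds along the standard De~Giorgi template.
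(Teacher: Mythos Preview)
Your proposal is correct and follows essentially the same route as the paper: a forward De~Giorgi iteration with time-independent cutoffs, the same levels $k_n$, and a sign-based case split on $\Phi_-$, after which the paper simply defers to \cite[Chapter~3, Section~4]{DBGV-mono} for the endgame you spell out explicitly. The one point to tighten is your appeal to ``the computation of Proposition~\ref{Prop:2:3}'' in the $k_n>0$ branch: that computation assumed $\z$ vanishes at the bottom, which is false here, so you must additionally observe (as the paper does) that $k_n>0$ forces $k_0=\mu^-+\xi\om>0$, whence the initial bound gives $u(\cdot,t_o)>0$ and the bottom boundary term $-k_n\int_{K_\rho\times\{t_o\}}\nu\chi_{[u\le0]}\z_n^p\,\dx$ in $\Phi_-$ vanishes---this is precisely the ``gluing'' subtlety you anticipated, and it resolves cleanly.
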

\begin{proof}
We will use the energy estimate in Proposition~\ref{Prop:2:1} (the interior version) 
in the cylinder $K_{\rho}(x_o)\times(t_o,t_o+\gm_o\theta\rho^p)$, 
with a cutoff function $\z(x)$ in $K_\rho(x_o)$ independent of $t$, and with levels
\[
k_n=\mu^-+\frac{\xi\om}2+\frac{\xi\om}{2^{n+1}},\quad n=0,1,\cdots.
\] 
Let us first analyze $\Phi_-$ under this setting. If $k_n\le0$ , then $\Phi_{-}(k_n, t_o,t,\z)=0$.
If instead, $k_n>0$ for $n\in\{0,1,\cdots,n_*\}$, then $k_o=\mu^-+\xi\om>0$, and  we calculate
for $n\in\{0,1,\cdots,n_*\}$ that, omitting $x_o$ for simplicity,
\begin{align*}
\Phi_{-}&(k_n, t_o,t,\z)=\int_{K_\rho\times\{\tau\}}\nu \chi_{[u\le0]}(u-k_n)_-\z^p\,\dx\Big|_{t_o}^t\\
&\quad-\int_{t_o}^t\int_{K_\rho}\nu\chi_{[u\le0]}\pl_t (u-k_n)_-\z^p\,\dx\d\tau\\
&=k_n\int_{K_\rho}\nu\chi_{[u\le0]}\z^p\,\dx\Big|_{t_o}^t+\int_{K_\rho}\nu u_-\z^p\,\dx\Big|_{t_o}^t
-\int_{t_o}^t\int_{K_\rho}\nu \pl_t u_-\z^p\,\dx\d\tau\\
&=k_n\int_{K_\rho\times\{t\}}\nu\chi_{[u\le0]}\z^p\,\dx-k_n\int_{K_\rho\times\{t_o\}}\nu\chi_{[u\le0]}\z^p\,\dx.
\end{align*} 
The first term in the last line is non-negative, while the second term vanishes because of the initial information
\[
u(\cdot, t_o)\ge\mu^-+ \xi\om>0,\quad\text{ a.e. in } K_{\rho}.
\]
Therefore, in any case the term $\Phi_-$ vanishes in the energy estimate of Proposition~\ref{Prop:2:1},
whereas the last spatial integral at $t_o$ also vanishes due to the initial information.
As a result, we obtain energy estimates similar to that of the parabolic $p$-Laplacian \eqref{Eq:p-Laplace},
and the proof can be completed as in \cite[Chapter~3, Section~4]{DBGV-mono}.
\end{proof}
\begin{remark}\label{Rmk:DG:initial:1}\upshape
Lemma~\ref{Lm:DG:initial:1} admits a variant version involving the lateral boundary datum $g$.
To this end, we only need to impose the additional condition that 
$$\mu^-+\xi\om\le \inf_{Q^+_{\rho}(\theta)\cap S_T} g,
\quad\text{ where }Q^+_{\rho}(\theta)=K_{\rho}(x_o)\times(t_o,t_o+\theta\rho^p),$$ 
such that
the levels $k_n$ become admissible in the application of Proposition~\ref{Prop:2:1}.
\end{remark}
The next lemma asserts that if we truncate a solution by a positive number,
the resultant function is a sub-solution to the parabolic $p$-Laplacian \eqref{Eq:p-Laplace}.
\begin{lemma}\label{Lm:sub-solution}
Let $u$ be a  local weak sub-solution to \eqref{Eq:1:1} with \eqref{Eq:1:2} in $E_T$. 
Then for any $k>0$ the truncation $(u-k)_+$ is a local weak sub-solution to \eqref{Eq:p-Laplace} with \eqref{Eq:1:2} in $E_T$.
\end{lemma}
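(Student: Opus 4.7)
The plan is to start from the integral formulation \eqref{Eq:int-form} involving $\partial_t u$, applied to $u$ as a local weak sub-solution, and to test it against $\zeta = H_\varepsilon(u-k)\varphi$, where $\varphi \ge 0$ is an arbitrary admissible test function and $H_\varepsilon$ is a smooth, non-decreasing approximation of $\chi_{(0,\infty)}$ with $H_\varepsilon \equiv 0$ on $(-\infty,0]$ and $H_\varepsilon \equiv 1$ on $[\varepsilon,\infty)$. The crucial observation is that $k>0$, so on the support of $H_\varepsilon(u-k)$ we have $u \ge k - 0 > 0$ (for $\varepsilon < k$), hence $\chi_{[u\le 0]} = 0$ there. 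Consequently, every term in \eqref{Eq:int-form} carrying the factor $\nu(x,t)\chi_{[u\le 0]}$ vanishes after this choice of test function, and the differential inclusion collapses to a genuine equation.

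After the cancellation, the inequality reads
\begin{equation*}
\iint_{K\times(t_1,t_2)} \partial_t u \cdot H_\varepsilon(u-k)\varphi \,\dx\dt
+\iint_{K\times(t_1,t_2)} \bl{A}(x,t,u,Du)\cdot D\big[H_\varepsilon(u-k)\varphi\big] \,\dx\dt \le 0.
\end{equation*}
Expanding the gradient produces $H_\varepsilon'(u-k)\varphi\, \bl{A}\cdot Du + H_\varepsilon(u-k)\bl{A}\cdot D\varphi$; the first summand is non-negative thanks to the coercivity $\bl{A}\cdot Du\ge C_o|Du|^p\ge 0$ in \eqref{Eq:1:2} and the monotonicity $H_\varepsilon'\ge 0$, and may therefore be discarded while preserving the inequality $\le 0$. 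The parabolic term is rewritten as $\partial_t G_\varepsilon(u-k)$ with $G_\varepsilon'=H_\varepsilon$, which is then integrated by parts in time against $\varphi$.

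The final step is to let $\varepsilon\downarrow 0$. Dominated/monotone convergence gives $G_\varepsilon(u-k)\to(u-k)_+$ and $H_\varepsilon(u-k)\to\chi_{[u>k]}$ a.e., yielding
\begin{equation*}
\int_K (u-k)_+\varphi\,\dx\bigg|_{t_1}^{t_2}+\iint_{K\times(t_1,t_2)}\Big[-(u-k)_+\partial_t\varphi+\bl{A}(x,t,u,Du)\chi_{[u>k]}\cdot D\varphi\Big]\,\dx\dt\le 0.
\end{equation*}
On $[u\le k]$ the truncation $w:=(u-k)_+$ vanishes together with its gradient, so setting $\widetilde{\bl{A}}(x,t,w,\xi):=\bl{A}(x,t,w+k,\xi)$ and invoking the upper bound $|\bl{A}(x,t,k,0)|\le C_1\cdot 0=0$ in \eqref{Eq:1:2}, one identifies $\bl{A}(x,t,u,Du)\chi_{[u>k]}=\widetilde{\bl{A}}(x,t,w,Dw)$ a.e. Since $\widetilde{\bl{A}}$ obeys the same structure conditions \eqref{Eq:1:2}, this exhibits $w$ as a local weak sub-solution to \eqref{Eq:p-Laplace}.

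The only delicate point I expect is justifying $H_\varepsilon(u-k)\varphi$ as an admissible test function in \eqref{Eq:function-space}; this is routine once one notes that $u\in L^p_{\loc}(W^{1,p}_{\loc})\cap W^{1,2}_{\loc}(L^2_{\loc})$ and the Lipschitz cutoff $H_\varepsilon$ preserves this regularity (with $\varepsilon$-dependent bounds that are irrelevant, since the constant drops out after passing to the limit). A Steklov-average or time-mollification may be interposed if one wants to avoid appealing directly to $\partial_t u$ and work solely with the original formulation in Section~\ref{S:1:2:1}; the argument is otherwise identical.
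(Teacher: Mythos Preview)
Your proposal is correct and follows essentially the same route as the paper: test the formulation \eqref{Eq:int-form} with an approximation of $\chi_{[u>k]}\varphi$, observe that $k>0$ forces the $\chi_{[u\le0]}$ terms to vanish, discard the non-negative term produced by the derivative of the cutoff via coercivity, and pass to the limit. The only cosmetic difference is the choice of cutoff---the paper uses $\frac{(u-k)_+}{(u-k)_++\varepsilon}\varphi$ rather than your $H_\varepsilon(u-k)\varphi$---and then defers the remaining calculations to \cite[Lemma~1.1, Chapter~1]{DBGV-mono}, whereas you spell them out.
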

\begin{proof}
Upon using the test function
\[
\z=\frac{(u-k)_+}{(u-k)_++\varep}\vp
\]
 where $\varep>0$, and $\vp\ge0$ satisfying \eqref{Eq:function-space} in the weak formulation \eqref{Eq:int-form} in Section~\ref{S:1:2:1}, 
 and noticing that the terms involving $\chi_{[u\le0]}$ all vanish due to the choice $k>0$,
the proof then runs similarly to the one in \cite[Lemma~1.1, Chapter~1]{DBGV-mono}.
\end{proof}
Based on Lemma~\ref{Lm:sub-solution}, together with the expansion of positivity 
of \cite[Chapter~5, Proposition~7.1]{DBGV-mono}, we have the following.
\begin{lemma}\label{Lm:expansion:p}
Let $u$ be a  local weak sub-solution to \eqref{Eq:1:1} with \eqref{Eq:1:2} in $E_T$.
Suppose that $\mu^+-\frac14\om>0$ and that for some $\al\in(0,1)$ there holds
\[
|[\mu^+-u(\cdot, t_o)\ge\tfrac14\om]\cap K_{\rho}(x_o)|\ge\al |K_{\rho}|.
\]
Then there exist positive constants $\eta$ and $\kappa$ depending only on the data, such that
\[
\mu^+-u(\cdot, t)\ge \eta_*\om\quad\text{ a.e. in } K_{\frac12\rho}(x_o),\quad\text{ where }\eta_*=\eta\al^\kappa,
\]
for all times
\[
t_o+\tfrac12(\eta_*\om)^{2-p}\rho^p\le t\le t_o+(\eta_*\om)^{2-p}\rho^p,
\]
provided the cylinders are included in $\widetilde{Q}_o$.
\end{lemma}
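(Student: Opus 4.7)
The plan is to reduce the statement, via the truncation technique of Lemma~\ref{Lm:sub-solution}, to the standard expansion of positivity for non-negative super-solutions of the parabolic $p$-Laplacian, and then translate back.

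First I would set $k:=\mu^+-\tfrac14\om$, which is strictly positive by the hypothesis $\mu^+-\tfrac14\om>0$. By Lemma~\ref{Lm:sub-solution} the truncation $v:=(u-k)_+$ is a non-negative, bounded, local weak sub-solution to the parabolic $p$-Laplace type equation \eqref{Eq:p-Laplace} with the same structure constants as in \eqref{Eq:1:2}. Next I would introduce the complementary function
\[
V:=\tfrac14\om - v,
\]
which satisfies $0\le V\le \tfrac14\om$. A direct verification shows that if $\bl{A}(x,t,u,\xi)$ satisfies \eqref{Eq:1:2}, then $\widetilde{\bl{A}}(x,t,w,\xi):=-\bl{A}(x,t,\tfrac14\om-w,-\xi)$ also satisfies \eqref{Eq:1:2}, and $V$ is a non-negative, local weak super-solution to $\partial_t V-\operatorname{div}\widetilde{\bl{A}}(x,t,V,DV)=0$. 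Thus the whole machinery for non-negative super-solutions of the parabolic $p$-Laplacian applies to $V$.

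The measure hypothesis rephrases cleanly in terms of $V$: on the set $[\mu^+-u(\cdot,t_o)\ge\tfrac14\om]$ we have $u(\cdot,t_o)\le k$, hence $v(\cdot,t_o)=0$ and $V(\cdot,t_o)=\tfrac14\om$. Therefore
\[
\big|\big[V(\cdot,t_o)\ge \tfrac14\om\big]\cap K_{\rho}(x_o)\big|\ge \al\,|K_{\rho}|.
\]
At this point I would invoke the expansion of positivity from \cite[Chapter~5, Proposition~7.1]{DBGV-mono} applied to $V$ with level $M=\tfrac14\om$: there exist constants $\eta,\kappa>0$, depending only on the data, such that, setting $\eta_*=\eta\al^\kappa$ (absorbing the harmless factor $\tfrac14$ into $\eta$), one has
\[
V(\cdot,t)\ge \eta_*\om\quad\text{a.e.~in }K_{\frac12\rho}(x_o)
\]
for all $t\in[t_o+\tfrac12(\eta_*\om)^{2-p}\rho^p,\,t_o+(\eta_*\om)^{2-p}\rho^p]$, provided the relevant intrinsic cylinder lies in $\widetilde{Q}_o$. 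Transcribing back, $V\ge\eta_*\om$ means $v\le\tfrac14\om-\eta_*\om$, and since $v=(u-k)_+$ this forces $u\le k+\tfrac14\om-\eta_*\om=\mu^+-\eta_*\om$ pointwise a.e.\ on the claimed set (indeed, on $\{u\le k\}$ the conclusion $\mu^+-u\ge\tfrac14\om\ge\eta_*\om$ is trivial once $\eta$ is chosen so that $\eta_*\le\tfrac14$). This is exactly the statement of the lemma.

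The main conceptual point to check carefully is the reflection step: verifying that the sub-solution property of $v$ is transformed into a bona fide super-solution property for $V=\tfrac14\om-v$ under a vector field that still satisfies the coercivity and growth bounds \eqref{Eq:1:2}, so that the DBGV expansion of positivity is legitimately applicable. The remaining ingredient is Lemma~\ref{Lm:sub-solution}, which is crucial because it removes the multi-valued nature of $\be(\cdot)$ at the level $[u=0]$: since $k>0$, the set $[u>k]$ is strictly away from the singularity of $\be$, and all non-$p$-Laplacian terms in the weak formulation drop out. Once these two ingredients are in hand, the argument is a direct translation.
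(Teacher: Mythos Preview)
Your proposal is correct and follows essentially the same route as the paper's own proof: set $k=\mu^+-\tfrac14\om>0$, use Lemma~\ref{Lm:sub-solution} to see that $(u-k)_+$ is a sub-solution to \eqref{Eq:p-Laplace}, pass to the non-negative super-solution $\tfrac14\om-(u-k)_+$, rewrite the measure hypothesis for this function, and invoke \cite[Chapter~5, Proposition~7.1]{DBGV-mono}. Your additional care in spelling out the reflected vector field $\widetilde{\bl{A}}$ and the trivial case $u\le k$ is welcome but not strictly necessary.
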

\begin{proof}
Let $k=\mu^+-\frac14\om$. By Lemma~\ref{Lm:sub-solution}, the function $(u-k)_+$ is a sub-solution to \eqref{Eq:p-Laplace}.
Since $(u-k)_+\le\frac14\om$ in $\widetilde{Q}_o$, the function $v:=\frac14\om-(u-k)_+$ is a non-negative,
local, weak super-solution to \eqref{Eq:p-Laplace}
 in $\widetilde{Q}_o$.
The measure theoretical information given here then reads
\[
|[v(\cdot, t_o)\ge\tfrac14\om]\cap K_{\rho}(x_o)|\ge\al |K_{\rho}|.
\]
Now an application of \cite[Chapter~5, Proposition~7.1]{DBGV-mono} will complete the proof.
\end{proof}
\begin{remark}\upshape
The original time interval given in \cite[Chapter~5, Proposition~7.1]{DBGV-mono} is
\[
t_o+\tfrac12\dl b^{p-2}(\eta_*\om)^{2-p}\rho^p\le t\le t_o+\dl b^{p-2}(\eta_*\om)^{2-p}\rho^p.
\]
A careful inspection of the proof there actually reveals that $\dl$ and $b$ can be selected independent of $\al$.
Hence we have absorbed them into $\eta$ here. Moreover, Lemma~\ref{Lm:expansion:p} is stable as $p\to2$.
\end{remark}
\subsection{Proof of Proposition~\ref{Prop:int-reg}}
Assume $(x_o,t_o)=(0,0)$.
Let $\theta=L(\frac14\om)^{2-p}$ with $L=L_o\om^{-a}$ for some $a,\, L_o>0$ to determined in terms of the data. 
To proceed we will assume that
\[
Q_\rho(\theta)\subset \widetilde{Q}_o=K_\rho\times(-\rho^{p-1},0),\quad\text{ such that }\quad \essosc_{Q_{\rho}(\theta)}u\le\om;
\]
otherwise we would have 
\begin{equation}\label{Eq:extra-control}
\om\le A\rho^{\frac1{a+p-2}}\quad\text{ where }A=L_o^{\frac1{a+p-2}} 4^{\frac{p-2}{a+p-2}}.
\end{equation}
As in \eqref{Eq:mu-pm} we may also assume that  
$\mu^+-\frac14\om> \frac14\om>0$, and hence Lemma~\ref{Lm:expansion:p} is at our disposal.

Let us suppose $L$ is a large integer with no loss of generality, and slice $Q_{\rho}(\theta)$ into a vertical stack of $L$ coaxial cylinders
\[
Q_i=K_{\rho}\times(t_i-\bar\theta\rho^p,t_i),\quad t_i=-i\bar\theta\rho^p,\quad i=0,\cdots, L-1,
\] 
all congruent with $Q_\rho(\bar\theta)$.
Here we have set $\bar\theta:=(\frac14\om)^{2-p}$ for ease of notation.

Let us focus on the bottom cylinder $Q_{L-1}$. The proof unfolds along the following two alternatives:
\begin{equation}\label{Eq:alternative-int}
\left\{
\begin{array}{cc}
|[u\le\mu^-+\tfrac14\om]\cap Q_{L-1}|\le c_o(\tfrac14\om)^{\frac{N+p}p}|Q_{L-1}|,\\[5pt]
|[u\le\mu^-+\tfrac14\om]\cap Q_{L-1}|> c_o(\tfrac14\om)^{\frac{N+p}p}|Q_{L-1}|.
\end{array}\right.
\end{equation}

Let us first suppose \eqref{Eq:alternative-int}$_1$ holds true. An application of Lemma~\ref{Lm:DG:int}
would give us that
\[
u\ge \mu^-+\tfrac18\om\quad\text{ a.e. in }\tfrac12Q_{L-1}.
\]
Here the notation $\tfrac12Q_{L-1}$ should be self-explanatory in view of Lemma~\ref{Lm:DG:int}.
In particular, the above pointwise lower bound of $u$ holds at the time level $t_o=-(L-1)\bar\theta\rho^p$
and it serves as the initial datum for an application of Lemma~\ref{Lm:DG:initial:1}.
As a result, by choosing $\xi(\om)$ so small that $\gm_o(\xi\om)^{2-p}\ge L(\frac14\om)^{2-p}$,  we obtain that
\begin{equation}\label{Eq:reduc-osc-int-1}
u\ge\mu^-+\tfrac12\xi\om\quad\text{ a.e. in }K_{\frac12\rho}\times (t_o,0).
\end{equation}
Note that the choice of $\xi(\om)$ is made out of the data and $L=L_o\om^{-a}$, that is, 
$$\xi=\tfrac14(\gm_o/L_o)^{\frac1{p-2}}\om^{\frac{a}{p-2}}$$
where $a, L_o>0$ will be determined later in terms of the data.

Next, we turn our attention to the second alternative \eqref{Eq:alternative-int}$_2$.
Since $\mu^+-\frac14\om\ge\mu^-+\frac14\om$, we may rephrase \eqref{Eq:alternative-int}$_2$ as
\[
|[\mu^+-u\ge\tfrac14\om]\cap Q_{L-1}|> c_o(\tfrac14\om)^{\frac{N+p}p}|Q_{L-1}|:=\al |Q_{L-1}|.
\]
Then it is not hard to see that there exists 
$$t_*\in \Big[t_{L-1}-\bar\theta\rho^p, t_{L-1}- \tfrac12 \al\bar\theta\rho^p\Big],$$ 
such that
\[
|[\mu^+-u(\cdot, t_*)\ge\tfrac14\om]\cap K_\rho|>\al |K_\rho|.
\]
Based on this measure theoretical information at $t_*$, an application of Lemma~\ref{Lm:expansion:p}
 yields that
 \begin{equation}\label{Eq:reduc-osc-int-2}
 \mu^+-u(\cdot, t)\ge\eta_o\al^{\kappa}\om\quad\text{ a.e. in }K_{\frac12\rho}
 \end{equation}
 for all times
 \[
 t_*+\tfrac12(\eta_o\al^{\kappa}\om)^{2-p}\rho^p\le t\le t_*+(\eta_o\al^{\kappa}\om)^{2-p}\rho^p.
 \]
Now we may determine $L$ by setting
 \[
(\eta_o\al^{\kappa}\om)^{2-p}=L\bar\theta,\quad\text{ i.e. }L=(4\eta_o\al^\kappa)^{-(p-2)}
\equiv L_o\om^{-a}.
 \]
Hence we have chosen $a:=\frac{N+p}p(p-2)\kappa$ and $L_o:=(4\eta_o c_o^\kappa 4^{-\frac{N+p}{p} \kappa})^{2-p}$.

Combining \eqref{Eq:reduc-osc-int-1} and \eqref{Eq:reduc-osc-int-2},
and taking \eqref{Eq:extra-control} into account, we obtain the reduction of oscillation
\[
\essosc_{Q_{\frac12\rho}(\theta)}u\le (1-\eta\om^q)\om\quad\text{ or }\quad \om\le A\rho^{\frac1{a+p-2}},
\]
where $q=\frac{N+p}p \kappa$ and $\eta$ depends only on the data.
In order to iterate the above arguments, we define
\[
\om_1=\max\Big\{(1-\eta\om^q)\om,\,A\rho^{\frac1{a+p-2}}\Big\};
\]
we need to choose $\rho_1>0$, such that
\[
Q_{\rho_1}(\theta_1)\subset Q_{\frac12\rho}(\theta), \quad
Q_{\rho_1}(\theta_1)\subset \widetilde{Q}_o,
\quad\text{ where }\theta_1=L_o\om_1^{-a} (\tfrac14\om_1)^{2-p}.
\]
Since we may assume $\om_1\ge\frac12\om$ and estimate
\[
\theta_1\rho_1^p=L_o\om_1^{-a}(\tfrac14\om_1)^{2-p}\rho_1^p\le L_o(\tfrac12\om)^{-a}(\tfrac18\om)^{2-p}\rho_1^p,
\]
the first set inclusion amounts to requiring that
\[
L_o(\tfrac12\om)^{-a}(\tfrac18\om)^{2-p}\rho_1^p=\theta(\tfrac12\rho)^p=L_o\om^{-a}(\tfrac14\om)^{2-p}\rho^p,\quad\text{ i.e. }\rho_1=c\rho
\]
for some $c\in(0,1)$ determined by the data only. The second set inclusion is verified similarly for such 
a choice of $\rho_1$. As a result, we arrive at
\[
\essosc_{Q_{\rho_1}(\theta_1)}\le\om_1.
\]

Now we may proceed by induction and introduce the following notations:
\begin{equation*}
\left\{
\begin{array}{cc}
\rho_o=\rho,\quad  \rho_{n+1}=c\rho_n,\quad\theta_n=L_o\om_n^{-a} (\tfrac14\om_n)^{2-p},\\[5pt]
\dsty\om_o=\om,\quad \om_{n+1}=\max\Big\{\om_n(1-\eta\om^q_n), \,A\rho_n^{\frac1{a+p-2}}\Big\},\\[5pt]
 Q_n=Q_{\frac12\rho_n}(\theta_n),\quad Q'_{n}=Q_{\rho_n}(\theta_n),\quad Q'_{n+1}\subset Q_n.
\end{array}\right.
\end{equation*}
Upon using the induction we have that for all $n=0,1,\cdots$,
\begin{equation*}
\essosc_{Q_n} u\le \om_{n}.
\end{equation*}
From now on the deduction of a logarithmic type modulus of continuity is performed as in Section~\ref{S:bdry-modulus}.
\section{Proof of Theorem~\ref{Thm:1:1}}\label{S:Thm-proof}
Before starting the proof of Theorem~\ref{Thm:1:1}, we still have to deal with the continuity at the initial level.
This is considerably simpler than the lateral boundary case and the interior case.
Let us define $\mu^\pm$ and $\om$ to be the supreme/infimum and the oscillation of $u$
 over the forward cylinder $\widetilde{Q}^+_\rho=K_{\rho}(x_o)\times(0,\rho^{p-1})$ 
 for some $x_o\in \overline{E}$ and $\rho\in(0,1)$. We will impose that $\om^{2-p}\rho^p\le\rho^{p-1}$, 
 such that $$Q^+_{\rho}(\theta):=K_{\rho}(x_o)\times(0,\theta\rho^{p})\subset \widetilde{Q}^+_\rho.$$
 Then we have the following oscillation decay estimate.
\begin{proposition}\label{Prop:bdry:Initial}
Suppose the hypotheses in Theorem~\ref{Thm:1:1} hold true.
There exist $\gm>1$ and  $\al_1\in(0,1)$  depending only on the data, such that
\[
\essosc_{Q^+_r(\om^{2-p})\cap E_T}u\le  \om \Big(\frac{r}{\rho}\Big)^{\al_1}+
\gm \osc_{K_{\sqrt{r\rho}}(x_o)\cap E}u_o+\gm \osc_{\widetilde{Q}^+_{\sqrt{r\rho}}\cap S_T} g\quad\text{ for all }r\in(0,\rho).
\]
\end{proposition}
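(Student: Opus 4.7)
At the initial level the proof is markedly simpler than at the lateral boundary (Section~\ref{S:bdry}) or in the interior (Section~\ref{S:interior}): Lemma~\ref{Lm:DG:initial:1} together with Remark~\ref{Rmk:DG:initial:1} delivers a one-step reduction of oscillation directly from pointwise control of the initial and lateral boundary data, without the measure-theoretic alternative that produced the logarithmic loss in Sections~\ref{S:bdry}--\ref{S:interior}. A straightforward geometric iteration then yields power-type decay.

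\textbf{Step 1 (one-step reduction).} Set $\widetilde\om:=\osc_{K_\rho(x_o)\cap E}u_o+\osc_{\widetilde{Q}^+_\rho\cap S_T}g$. If $\widetilde\om\ge\tfrac18\om$ the conclusion is immediate; otherwise the continuity of $u_o,g$ and their compatibility at the corner $\pl E\cap K_\rho\times\{0\}$ identify a common value $c\in[\mu^-,\mu^+]$ such that $u_o,g\in[c-\tfrac14\om,c+\tfrac14\om]$ on their respective domains. Depending on whether $c\le\tfrac12(\mu^++\mu^-)$ or not, the data thus satisfy either
\[
u_o\ge\mu^-+\tfrac14\om \text{ and } g\ge\mu^-+\tfrac14\om, \quad\text{or}\quad u_o\le\mu^+-\tfrac14\om \text{ and } g\le\mu^+-\tfrac14\om.
\]
In the first case Lemma~\ref{Lm:DG:initial:1} (together with Remark~\ref{Rmk:DG:initial:1}) applied with $\xi=\tfrac14$ yields $u\ge\mu^-+\tfrac18\om$ a.e.\ in a smaller forward cylinder. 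The second case is its sub-solution analogue applied to $(u-k)_+$ at $k=\mu^+-\tfrac14\om$: when $k>0$ the identity $\chi_{[u\le0]}(u-k)_+\equiv0$ makes the singular term $\Phi_+$ in Proposition~\ref{Prop:2:1} vanish identically, the energy estimate reduces to the parabolic $p$-Laplacian form, and the proof of Lemma~\ref{Lm:DG:initial:1} carries over verbatim; should $k\le0$, then $\mu^-+\tfrac14\om\le-\tfrac12\om<0$, placing the symmetric application in the $k_n\le0$ regime already handled in the proof of Lemma~\ref{Lm:DG:initial:1}. Absorbing constants to translate the intrinsic time-scale $(\tfrac14\om)^{2-p}$ into $\om^{2-p}$ (at the cost of shrinking the spatial radius), one concludes
\[
\essosc_{Q^+_{\sigma\rho}(\om^{2-p})\cap E_T}u\le(1-\tfrac18)\om
\]
for some $\sigma\in(0,\tfrac12]$ depending only on the data.

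\textbf{Step 2 (iteration and $\sqrt{r\rho}$).} Set $\rho_n:=\sigma^n\rho$, $\widetilde\om_n:=\osc_{K_{\rho_n}(x_o)\cap E}u_o+\osc_{\widetilde{Q}^+_{\rho_n}\cap S_T}g$, and recursively $\om_{n+1}:=\max\{(1-\tfrac18)\om_n,\,8\widetilde\om_n\}$ with $\om_o:=\om$. Iterating Step~1 on the cylinders $Q^+_{\rho_n}(\om^{2-p})$---which nest automatically since the time-scale $\om^{2-p}$ is fixed while $\rho_n$ decreases---yields $\essosc_{Q^+_{\rho_n}(\om^{2-p})\cap E_T}u\le\om_n$. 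Monotonicity $\widetilde\om_{k+1}\le\widetilde\om_k$ and a split of the resulting geometric sum at index $\lfloor n/2\rfloor$ give
\[
\om_n\le C\Big[(1-\tfrac18)^{n/2}\om+\widetilde\om_{\lfloor n/2\rfloor}\Big].
\]
For $r\in(0,\rho)$, choosing $n$ with $\rho_{n+1}<r\le\rho_n$ gives $\rho_n\asymp r$ and $\rho_{\lfloor n/2\rfloor}\asymp\sqrt{r\rho}$, producing the claimed estimate with $\al_1:=\log(8/7)/(2\log(1/\sigma))$ and $\gm$ depending only on the data.

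\textbf{Main obstacle.} The one genuine technical point is the sub-solution variant of Lemma~\ref{Lm:DG:initial:1} in Step~1. The saving observation is that $\chi_{[u\le0]}(u-k)_+\equiv0$ whenever $k>0$, making the singular term $\Phi_+$ in Proposition~\ref{Prop:2:1} vanish identically; the Stefan energy estimate collapses to the parabolic $p$-Laplacian form and the De~Giorgi scheme of Lemma~\ref{Lm:DG:initial:1} proceeds without modification. The remaining verifications---the corner compatibility of $u_o$ and $g$, the time-scale rescaling between $(\tfrac14\om)^{2-p}$ and $\om^{2-p}$, and the midpoint split in the iteration sum---are routine.
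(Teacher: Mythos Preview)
Your proposal is correct and follows essentially the same approach as the paper: a one-step reduction via Lemma~\ref{Lm:DG:initial:1} (and its sub-solution analogue) based on a dichotomy for the boundary data, followed by a geometric iteration producing H\"older decay. The paper phrases the dichotomy slightly more directly---comparing $\mu^\pm\mp\tfrac14\om$ against $\sup/\inf$ of $\{u_o,g\}$ on the parabolic boundary rather than through a common value $c$ and corner compatibility---and then dispatches the sub-solution alternative by ``without loss of generality'' and the iteration by reference to Section~\ref{S:bdry-modulus}; your Step~2 with the midpoint split yielding the $\sqrt{r\rho}$ scale is exactly the standard argument the paper points to.
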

\begin{proof}
Observe first that one of the following two alternatives must hold:
\[
\mu^+-\tfrac14\om>\sup_{\widetilde{Q}^+_\rho\cap \pl_{\mathcal{P}}E_T}\{u_o,\,g\}\quad\text{ or }\quad
\mu^-+\tfrac14\om<\inf_{\widetilde{Q}^+_\rho\cap \pl_{\mathcal{P}}E_T}\{u_o,\,g\};
\]
otherwise we obtain
\begin{equation*}
\om\le 2\max\Big\{\osc_{K_\rho(x_o)\cap E}u_o,\, \osc_{\widetilde{Q}^+_\rho\cap S_T} g\Big\}.
\end{equation*}
With no loss of generality, let us suppose the second alternative holds true.
Then we are in the position to apply Lemma~\ref{Lm:DG:initial:1} (recalling Remark~\ref{Rmk:DG:initial:1}), 
such that for some $\gm_o\in(0,1)$
depending only on the data, there holds
\[
u\ge\mu^-+\tfrac18\om\quad\text{ a.e. in }\{K_{\frac12\rho}(x_o)\cap E\}\times(t_o,t_o+\gm_o\theta\rho^p),
\]
where $\theta=(\tfrac14\om)^{2-p}$. This in turn yields a reduction of oscillation
\[
\essosc_{Q^+_{\frac12\rho}(\gm_o\theta)\cap E_T}u\le\tfrac78\om.
\]
Notice that the above reduction of oscillation at the initial level bears no singularity of $\be(\cdot)$
and in fact yields a H\"older type decay.
As such the final oscillation decay is dominated by that of $u_o$ and $g$ near the initial level.
The technical realization runs similar to Section~\ref{S:bdry-modulus}; see also \cite[Section~7.1]{BDL}. 
We refrain from giving further details, to avoid repetition. 
\end{proof}

The proof of Theorem~\ref{Thm:1:1} consists of several cases, which rely on
Proposition~\ref{Prop:bdry:D} near the lateral boundary, Proposition~\ref{Prop:int-reg}
in the interior and Proposition~\ref{Prop:bdry:Initial} near the initial level.

Let us consider two points $z_i:=(x_i,t_i)$ in $\overline{E_T}$ with $i=1,2$,
satisfying 
\[
\dist_p(z_1,z_2):=|x_1-x_2|+|t_1-t_2|^{\frac1{p}}\le\tfrac14\bar\rho
\]
where $\bar\rho$ is defined in the geometric property \eqref{geometry} of $\pl E$.

Suppose first that $\dist_p(z_i,\pl_{\mathcal{P}} E_T):=\inf\{\dist_p(z_i,Q): Q\in\pl_{\mathcal{P}} E_T\}>\frac12\bar\rho$ for $i=1,2$, then according to the interior estimate of Proposition~\ref{Prop:int-reg}, we have
for some proper $\sig\in(0,1)$ that
\begin{equation}\label{Eq:modulus-final}
|u(x_1,t_1)-u(x_2,t_2)|\le C\Bigg(\ln \frac{\bar\rho}{|x_1 - x_2|+|t_1 - t_2|^{\frac1p}}\Bigg)^{-\sig},
\end{equation}
where $C>0$ takes into account the data and $\|u\|_{\infty, E_T}$.

Next suppose that one of $\{z_1,z_2\}$, $z_1$ for instance, satisfies $\dist_p(z_1,\pl_{\mathcal{P}} E_T)\le\frac12\bar\rho$.
Suppose further that $t_1, t_2>(\frac12\bar\rho)^{p-1}$.
Then we may apply Proposition~\ref{Prop:bdry:D} to obtain the same type of estimate as \eqref{Eq:modulus-final}.
The present constant $C$ involves the data, $\al_*$, $\lm$, $C_g$ and $\|u\|_{\infty, E_T}$.

Finally if one of $\{t_1,t_2\}$,  $t_1$ for instance, satisfies that $t_1\le(\frac12\bar\rho)^{p-1}$, 
then we may apply Proposition~\ref{Prop:bdry:Initial} to obtain the same type of estimate as \eqref{Eq:modulus-final}.
The present constant $C$ involves the data, $\lm$, $C_g$, $C_{u_o}$ and $\|u\|_{\infty, E_T}$.

\section{The Neumann problem}
In this section, we will focus on the Neumann problem \eqref{Neumann}.
The treatment more or less parallels the interior regularity in Section~\ref{S:interior}.
However, we are not able to treat a general $p$-Laplacian  for the moment; see Remark~\ref{Rmk:6:2}.

First of all, an energy estimate is in order.
\begin{proposition}\label{Prop:energy:Neumann}
	Let $u$ be a bounded weak solution to the Neumann problem 
\eqref{Neumann} under the condition \eqref{Eq:1:2} with $p=2$.
	Assume that $\pl E$ is of class $C^1$ and \eqref{N-data} holds.
	There exists a constant $\gm>0$ depending on $C_o$, $C_1$ and the structure of $\pl E$, such that
 	for all cylinders $Q_{R,S}=K_R(x_o)\times (t_o-S,t_o)$, 
 	every $k\in\rr$, and every non-negative, piecewise smooth cutoff function
 	$\z$ vanishing on $K_R(x_o)\times (t_o-S,t_o)$,  there holds
\begin{align*}
	\essup_{t_o-S<t<t_o}&\Big\{\int_{\{K_R(x_o)\cap E\}\times\{t\}}	
	\z^2 (u-k)_\pm^2\,\dx +\Phi_{\pm}(k, t_o-S,t,\z)\Big\}\\
	&\quad+
	\iint_{Q_{R,S}\cap E_T}\z^2|D(u-k)_\pm|^2\,\dx\dt\\
	&\le
	\gm\iint_{Q_{R,S}\cap E_T}
		\Big[
		(u-k)^{2}_\pm|D\z|^2 + (u-k)_\pm^2|\partial_t\z^2|
		\Big]
		\,\dx\dt,\\
	&\quad
	+\int_{\{K_R(x_o)\cap E\}\times \{t_o-S\}} \z^2 (u-k)_\pm^2\,\dx\\
	&\quad
	+\gamma  C_2^{2}\iint_{Q_{R,S}\cap E_T} \z^2\chi_{[(u-k)_\pm>0]}\,\dx\dt,
\end{align*}
where 
\begin{align*}
\Phi_{\pm}(k, t_o-S,t,\z)=&-\int_{\{K_R(x_o)\cap E\}\times\{\tau\}}\nu(x,t)\chi_{[u\le0]}[\pm(u-k)_\pm]\z^2\,\dx\Big|_{t_o-S}^t\\
&+\int_{t_o-S}^t\int_{K_R(x_o)\cap E}\nu(x,t) \chi_{[u\le0]}\pl_t[\pm(u-k)_\pm\z^2]\,\dx\d\tau
\end{align*}
and $\nu(x,t)$ is a selection out of $[0,\nu]$ for $u(x,t)=0$, and $\nu(x,t)=\nu$ for $u(x,t)<0$.
\end{proposition}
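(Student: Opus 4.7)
The plan is to adapt the derivation of Proposition~\ref{Prop:2:1} to the Neumann setting, exploiting the fact that the admissible test functions from Section~\ref{S:1:4:4} are only required to vanish on the lateral boundary $\partial K_R(x_o)\times(t_o-S,t_o)$ of the cylinder $Q_{R,S}$, and not on the Neumann portion $\{K_R(x_o)\cap\partial E\}\times(t_o-S,t_o)$. Consequently the Neumann datum $\psi$ re-enters the calculation as a surface integral on the right-hand side, and this is the only structural difference from Proposition~\ref{Prop:2:1}; it has to be absorbed into the asserted right-hand side by trace and Young estimates.

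First I would take $\varphi=\zeta^{2}(u-k)_{\pm}$ as test function and carry over verbatim the time-derivative bookkeeping of Proposition~\ref{Prop:2:1}, merely restricting every domain of integration from $K_R(x_o)$ to $K_R(x_o)\cap E$. Integrating $\partial_t u\,\varphi$ by parts in $t$ and exploiting the selection structure of $\be(\cdot)$ produces the kinetic term $\int\zeta^{2}(u-k)_{\pm}^{2}\,\dx$ at the top, the initial-value contribution at $t_o-S$, and the characteristic term $\Phi_{\pm}$; all of this transfers line by line from the Dirichlet argument since the derivation is local in space. For the elliptic term, the structure conditions \eqref{Eq:1:2} with $p=2$ yield the good lower bound $C_o\zeta^{2}|D(u-k)_{\pm}|^{2}$ on the left, while the cross piece obtained by differentiating $\zeta^{2}$ is of the form $C_1|D(u-k)_{\pm}|\,(u-k)_{\pm}\zeta|D\zeta|$ and is absorbed by a standard Young's inequality at the cost of $(u-k)_{\pm}^{2}|D\zeta|^{2}$ on the right-hand side.

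The genuinely new ingredient is the Neumann boundary integral
\[
\mathcal{I}:=\iint_{\{K_R(x_o)\cap\partial E\}\times(t_o-S,t_o)}\psi(x,t,u)\,\zeta^{2}(u-k)_{\pm}\,\d\sigma\dt.
\]
Since $|\psi|\le C_2$ by \eqref{N-data} and $\zeta^{2}(u-k)_{\pm}=\zeta^{2}(u-k)_{\pm}\chi_{[(u-k)_{\pm}>0]}$, it suffices to control, for each fixed time slice, the surface integral $\int_{K_R\cap\partial E}\zeta^{2}(u-k)_{\pm}\,\d\sigma$. Because $\partial E$ is of class $C^{1}$, the classical $W^{1,1}$ trace inequality applied to $w=\zeta^{2}(u-k)_{\pm}$ converts this into the bulk estimate
\[
\int_{K_R\cap\partial E} w\,\d\sigma\le \gamma_{\partial E}\int_{K_R\cap E}\Big[\zeta^{2}(u-k)_{\pm}+\zeta|D\zeta|(u-k)_{\pm}+\zeta^{2}|D(u-k)_{\pm}|\Big]\chi_{[(u-k)_{\pm}>0]}\,\dx,
\]
where $\gamma_{\partial E}$ depends only on the $C^{1}$ structure of $\partial E$. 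Integrating in time and distributing $C_2$ via three Young applications yields three families of bulk contributions: the one involving $|D(u-k)_{\pm}|^{2}$ is absorbed into the good term on the left by taking the Young parameter small, the ones quadratic in $(u-k)_{\pm}$ merge with the existing right-hand entries carrying $(u-k)_{\pm}^{2}|D\zeta|^{2}$ and $(u-k)_{\pm}^{2}|\partial_t\zeta^{2}|$, and the residual is exactly $\gamma C_2^{2}\iint\zeta^{2}\chi_{[(u-k)_{\pm}>0]}\,\dx\dt$ as claimed.

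The main obstacle is executing this last step without paying an implicit dependence on $\|u\|_{\infty}$ in the final $\gamma$: the naive estimate $C_2\zeta^{2}(u-k)_{\pm}\le C_2\|u-k\|_{\infty}\zeta^{2}\chi_{[(u-k)_{\pm}>0]}$ delivers the desired indicator structure but destroys the clean $C_2^{2}$ dependence. The remedy is to keep $(u-k)_{\pm}$ inside the Young pairing through each of the three trace-derived bulk terms, factoring it against $\zeta$, $|D\zeta|$, or $|D(u-k)_{\pm}|$ respectively, so that every quadratic $(u-k)_{\pm}^{2}$ contribution ends up paired with a cutoff weight already present on the right-hand side, while the bare $\chi$-indicator inherits only the $C_2^{2}$ constant.
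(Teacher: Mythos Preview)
Your proposal is correct and follows essentially the same route as the paper: test with $\zeta^{2}(u-k)_{\pm}$, carry over the Proposition~\ref{Prop:2:1} computations verbatim on $K_R\cap E$, and then control the Neumann surface term via $|\psi|\le C_2$, the slicewise trace inequality (the paper cites \cite[Proposition~18.1]{DB-RA}), and Young's inequality to absorb $|D(u-k)_{\pm}|$ into the left and produce the $\gamma C_2^{2}\zeta^{2}\chi_{[(u-k)_{\pm}>0]}$ residual. The only cosmetic discrepancy is that the trace step is purely spatial, so the quadratic remainders it generates pair with $|D\zeta|^{2}$ (and $\zeta^{2}$), not with $|\partial_t\zeta^{2}|$ as you wrote; the paper's displayed estimate likewise records only the $|D\zeta|^{2}$ contribution.
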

\begin{proof}
We deal with sub-solutions only as the other case is analogous. 
Upon using the test function $(u-k)_+\z^2$
in the weak formulation of Section~\ref{S:1:4:4} and taking the variational datum into account, similar calculations as in Proposition~\ref{Prop:2:1} can be reproduced here to obtain
\begin{align*}
	\essup_{t_o-S<t<t_o}&\Big\{\int_{\{K_R(x_o)\cap E\}\times\{t\}}	
	\z^2 (u-k)_+^2\,\dx +\Phi_{+}(k, t_o-S,t,\z)\Big\}\\
	&\quad+
	\iint_{Q_{R,S}\cap E_T}\z^2|D(u-k)_+|^2\,\dx\dt\\
	&\le
	\gm\iint_{Q_{R,S}\cap E_T}
		\Big[
		(u-k)^{2}_+|D\z|^2 + (u-k)_+^2|\partial_t\z^2|
		\Big]
		\,\dx\dt,\\
	&\quad
	+\iint_{Q_{R,S}\cap S_T}\z^p\psi(x,t,u)(u-k)_+\,\d \sig\dt\\
	&\quad
	+\int_{\{K_R(x_o)\cap E\}\times \{t_o-S\}} \z^2 (u-k)_+^2\,\dx.
\end{align*}
In order to treat the boundary integral, we make use of \eqref{N-data}, apply the {\it trace inequality} (cf.~\cite[Proposition~18.1]{DB-RA})
for each time slice and then integrate in time, and use Young's inequality
to estimate:
\begin{align*}
&\iint_{Q_{R,S}\cap S_T}\psi(x,t,u)(u-k)_+\z^2\,\d\sig\dt\\
&\quad\le C_2\iint_{\pl (K_R\cap E)\times (t_o-S,t_o)}(u-k)_+\z^2\,\d \sig\dt\\
&\quad\le  \gamma C_2\iint_{Q_{R,S}\cap E_T}
\Big[|D(u-k)_+|\z^2+(u-k)_+\big(\z^2+|D\z^2|\big)\Big]\,\dx\dt\\
&\quad\le\frac{C_o}{4}\iint_{Q_{R,S}\cap E_T} \zeta^2 |D(u-k)_+|^2 \dx\dt
  + \gamma \iint_{Q_{R,S}\cap E_T} (u-k)^2_+|D\zeta|^2\,\dx\dt\\
  &\qquad+ \gamma  C_2^{2}\iint_{Q_{R,S}\cap E_T} \z^2\chi_{[u>k]}\,\dx\dt.
\end{align*}
Substituting it back yields the desired energy estimate.
Notice also that the structure of $\pl E$ enters the constant $\gm$ through the trace inequality.
\end{proof}
\begin{remark}\label{Rmk:6:1}\upshape
The vertex $(x_o,t_o)$ of the cylinder $Q_{R,S}$
needs not to be on the lateral boundary $S_T$.
If $Q_{R,S}\subset E_T$, then the energy estimate in Proposition~\ref{Prop:energy:Neumann}
reduces to the interior case in Proposition~\ref{Prop:2:1} for $p=2$ and $C_2=0$.
This remark also applies to the following Lemmas~\ref{Lm:DG:Neumann:1} -- \ref{Lm:expansion}.
\end{remark}
\subsection{Preliminaries}\label{S:6:1}
Let $(x_o,t_o)\in \overline{E}_T$ and assume $t_o-\rho^2>0$.
For  a cylinder $Q_{o}=Q_\rho\equiv K_{\rho}(x_o)\times(t_o-\rho^{2},t_o)$ with $\rho\in(0,1)$,
we introduce the numbers $\mu^{\pm}$ and $\om$ satisfying
\begin{equation*}
	\mu^+\ge\essup_{Q_{o}\cap E_T} u,
	\quad 
	\mu^-\le\essinf_{Q_{o}\cap E_T} u,
	\quad
	\om\ge\mu^+ - \mu^-.
\end{equation*}
The numbers $\mu^{\pm}$ and $\om$ are used in the following lemmas.
\begin{lemma}\label{Lm:DG:Neumann:1}
Let the hypotheses in Proposition~\ref{Prop:energy:Neumann} hold true and $\xi\in(0,1)$.
There exist  constants $\gm>0$ and $c_o\in(0,1)$ depending only on the data
and the structure of $\pl E$, such that if $\gm C_2\rho<\xi\om$ and 
\[
|[ u<\mu^-+\xi\om]\cap Q_{\rho}\cap E_T|\le c_o(\xi\om)^{\frac{N+2}2}|Q_{\rho}\cap E_T|,
\]
then
\[
 u\ge\mu^-+\tfrac12\xi\om\quad\text{ a.e. in }Q_{\frac12\rho}\cap E_T.
\]
\end{lemma}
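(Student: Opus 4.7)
The plan is to mirror the De Giorgi iteration of Lemma~\ref{Lm:DG:1} in a purely interior-type setting, using the Neumann energy estimate of Proposition~\ref{Prop:energy:Neumann} and absorbing the extra variational contribution via the hypothesis $\gm C_2\rho<\xi\om$. Since $p=2$, no intrinsic time scaling is required and one may simply take $\theta=1$. I would assume $(x_o,t_o)=(0,0)$ and introduce, as in the proof of Lemma~\ref{Lm:DG:1}, the sequences of levels $k_n=\mu^-+\tfrac12\xi\om+\xi\om/2^{n+1}$, $\tilde k_n=\tfrac{k_n+k_{n+1}}{2}$, radii $\rho_n=\tfrac12\rho+\rho/2^{n+1}$, $\tilde\rho_n=\tfrac{\rho_n+\rho_{n+1}}{2}$, the associated cubes $K_n=K_{\rho_n}$, cylinders $Q_n=K_n\times(-\rho_n^2,0)$, $\widetilde Q_n$ analogously, and cutoff functions $\zeta_n$ vanishing on $\pl_{\mathcal{P}}Q_n$, equal to one on $\widetilde Q_n$, with the usual bounds $|D\zeta_n|\le \gm 2^n/\rho$ and $|\pl_t\zeta_n|\le \gm 4^n/\rho^2$.

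The second step is to apply Proposition~\ref{Prop:energy:Neumann} to $(u-\tilde k_n)_-$ on $Q_n\cap E_T$. The singular term $\Phi_-$ is treated exactly as in Propositions~\ref{Prop:2:2}--\ref{Prop:2:3}: when $\tilde k_n\le 0$ it vanishes; when $\tilde k_n>0$, the left-hand contribution $\tilde k_n\int\nu\chi_{[u\le 0]}\zeta_n^2\,\dx$ is non-negative and discarded, while the residual right-hand piece $\iint\nu\chi_{[u\le 0]}(\tilde k_n-u)|\pl_t\zeta_n^2|\,\dx\dt$ is absorbed into the pre-existing $(u-\tilde k_n)_-^2|\pl_t\zeta_n^2|$ term. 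The Neumann trace term of Proposition~\ref{Prop:energy:Neumann} contributes $\gm C_2^2\iint\zeta_n^2\chi_{[u<\tilde k_n]}\,\dx\dt\le\gm C_2^2|A_n|$, with $A_n:=[u<k_n]\cap Q_n\cap E_T$. Using $(u-\tilde k_n)_-\le\xi\om$ on $A_n$, the total right-hand side takes the form $\gm C^n (\xi\om)^2/\rho^2\,|A_n|+\gm C_2^2|A_n|$, and the hypothesis $\gm C_2\rho<\xi\om$ ensures $C_2^2\le(\xi\om/\rho)^2$, so the Neumann contribution is dominated by the first term. The estimate thus reduces to exactly the form used in Lemma~\ref{Lm:DG:1}.

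The remainder of the De Giorgi scheme is standard. A further cutoff $\phi$ vanishing on $\pl_{\mathcal{P}}\widetilde Q_n$ and equal to one on $Q_{n+1}\cap E_T$, combined with the parabolic Sobolev embedding \cite[Chapter~I, Proposition~3.1]{DB} and H\"older's inequality, yields the recursion
\begin{equation*}
Y_{n+1}\le \frac{\gm C^n}{(\xi\om)^{1/2}}\,Y_n^{1+\frac{1}{N+2}},\qquad Y_n:=\frac{|A_n|}{|Q_n\cap E_T|},
\end{equation*}
so that \cite[Chapter~I, Lemma~4.1]{DB} yields $Y_n\to 0$ provided $Y_0\le c_o(\xi\om)^{(N+2)/2}$ for a constant $c_o$ depending only on the data and the structure of $\pl E$ (the latter entering through the trace constant inside Proposition~\ref{Prop:energy:Neumann}). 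This is precisely the hypothesis.

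The main difficulty I anticipate is the simultaneous uniform-in-$n$ control of the Neumann trace term and the singular $\Phi_-$ contribution when the levels $\tilde k_n$ may be of either sign. The $\Phi_-$ bookkeeping from Propositions~\ref{Prop:2:2}--\ref{Prop:2:3} transfers verbatim, since its derivation uses only time integration by parts; the absorption of the trace term is tight with the smallness threshold $\gm C_2\rho<\xi\om$, which is precisely why such a hypothesis appears in the statement.
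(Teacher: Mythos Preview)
Your approach is correct and coincides with the paper's: mirror the De Giorgi iteration of Lemma~\ref{Lm:DG:1} with $p=2$, intersect all sets with $E_T$, and absorb the Neumann trace contribution $\gm C_2^{2}|A_n|$ via the hypothesis $\gm C_2\rho<\xi\om$.

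There is one internal inconsistency you should repair. You assert that the residual singular piece
\[
\iint_{Q_n\cap E_T}\nu\chi_{[u\le0]}(\tilde k_n-u)\,|\pl_t\zeta_n^2|\,\dx\dt
\]
is \emph{absorbed into} the quadratic term $(u-\tilde k_n)_-^2|\pl_t\zeta_n^2|$, and that the total right-hand side is therefore $\gm C^n(\xi\om)^2\rho^{-2}|A_n|$. This absorption fails when $\xi\om$ is small: on $[u\le0]$ one only has $\nu(\tilde k_n-u)\le\nu\,\xi\om$, so the singular term contributes $\gm\nu\,\xi\om\,4^n\rho^{-2}|A_n|$, which \emph{dominates} the quadratic contribution $(\xi\om)^2\,4^n\rho^{-2}|A_n|$. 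The correct right-hand bound is thus of order $(\xi\om)^{1}$, not $(\xi\om)^{2}$ --- exactly as in Lemma~\ref{Lm:DG:1} with $p=2$, where the exponent is $p-1=1$. It is precisely this $(\xi\om)^{1}$ that produces the factor $(\xi\om)^{-1/2}$ in your recursion and hence the threshold $c_o(\xi\om)^{(N+2)/2}$; a genuine $(\xi\om)^{2}$ bound would give a recursion with no negative power of $\xi\om$ and a threshold independent of $\xi\om$, contradicting the statement. Your displayed recursion and threshold are in fact the correct ones, so once you adjust the bound on the $\Phi_-$ contribution the argument is complete.
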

\begin{proof}
The proof runs almost verbatim as in Lemma~\ref{Lm:DG:1}. 
One needs only to intersect all cylinders and cubes with $E_T$ and take $p=2$.
As for the term of $C_2$, it is absorbed into other terms on the right by imposing $\gm C_2\rho<\xi\om$.
Certainly, if $Q_\rho\subset E_T$, the term of $C_2$ does not appear and we are back to
Lemma~\ref{Lm:DG:int} for $p=2$.
\end{proof}
We present the following lemma on expansion of positivity.
\begin{lemma}\label{Lm:expansion}
Let the hypotheses in Proposition~\ref{Prop:energy:Neumann} hold true.
Suppose that $\mu^+-\tfrac14\om>0$ and that for some $\al\in(0,1)$ and some $t_*\in(t_o-\rho^2,t_o)$ there holds
\[
|[\mu^+ -u(\cdot, t_*)>\tfrac14\om]\cap K_{\rho}(x_o)\cap E|\ge\al |K_{\rho}(x_o)\cap E|.
\]
Then there exist positive constants $\gm$, $\dl$, $\eta$ and $\kappa$ depending only on the data
and the structure of $\pl E$, such that
either $\gm C_2\rho\ge\om$ or
\[
\mu^+-u(\cdot, t)\ge\eta\al^\kappa \om\quad\text{ a.e. in } K_{\rho}(x_o)\cap E,
\]
for all times
\[
t_*+\tfrac12\dl\rho^2\le t\le t_*+\dl\rho^2,
\]
provided $t_*+\dl\rho^2\le t_o$.
\end{lemma}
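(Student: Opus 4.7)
The plan is to adapt the interior argument of Lemma~\ref{Lm:expansion:p} — where the expansion of positivity for the heat equation from \cite[Chapter~5, Proposition~7.1]{DBGV-mono} is applied to a suitable auxiliary function — to the present Neumann setting, with the modifications dictated by Proposition~\ref{Prop:energy:Neumann}.

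First I would set $k:=\mu^+-\tfrac14\om>0$. Since $k>0$, the singular part of $\be(\cdot)$ does not enter the super-level set $[u>k]$, so an argument parallel to Lemma~\ref{Lm:sub-solution} shows that $(u-k)_+$ is a bounded weak sub-solution of the heat equation in $Q_o\cap E_T$ with the same Neumann datum $\psi$ on $\pl E$. Because $(u-k)_+\le\tfrac14\om$, the function
\[
v:=\tfrac14\om-(u-k)_+
\]
is then a non-negative, bounded weak super-solution of the heat equation with a Neumann datum bounded in modulus by $C_2$, and the hypothesis rephrases as
\[
\big|[v(\cdot,t_*)\ge\tfrac14\om]\cap K_\rho(x_o)\cap E\big|\ge\al\,|K_\rho(x_o)\cap E|.
\]

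Next I would establish a Neumann analogue of \cite[Chapter~5, Proposition~7.1]{DBGV-mono} for $v$, which can be transplanted in three standard steps, modulo a boundary error proportional to $C_2\rho$: (i) a logarithmic-type estimate, derived from Proposition~\ref{Prop:energy:Neumann} applied to $v$ with a test function of the form $\z^2\ln\!\big[H/(H-(v-k)_-)\big]$, propagates the measure density $|\{v(\cdot,t)\ge\tfrac14\om\}\cap K_\rho(x_o)\cap E|\gtrsim\al|K_\rho(x_o)\cap E|$ forward in time for $t\in[t_*,t_*+\dl\rho^2]$; (ii) a Krylov--Safonov-type covering in the spirit of \cite{DBGV-mono, Liao} converts the polynomial loss $\al$ into a power $\al^{\kappa_o}$ on the level while enlarging the positivity set to cover most of $K_\rho(x_o)\cap E$; (iii) a final De Giorgi iteration based on Lemma~\ref{Lm:DG:Neumann:1}, applied to $v$ with $\xi\om$ replaced by $\eta\al^\kappa\om$, promotes the smallness of the complementary set to the pointwise bound $v(\cdot,t)\ge\eta\al^\kappa\tfrac14\om$ on the advertised time interval. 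Unwinding the definition of $v$ then yields $\mu^+-u(\cdot,t)\ge\eta\al^\kappa\om$ after relabelling $\eta$.

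The main obstacle is step (i). The Neumann flux enters the energy estimate through the boundary term $\gm C_2^2\iint\z^2\chi_{[(v-k)_->0]}\,\dx\dt$ in Proposition~\ref{Prop:energy:Neumann}, and for the logarithmic estimate to deliver useful measure information at a level comparable to $\eta\al^\kappa\om$ this contribution must stay subordinate to the dissipation; this forces $C_2\rho\lesssim\om$, which is precisely why the dichotomy $\gm C_2\rho\ge\om$ has to be declared as the alternative (the power $\al^\kappa$ being ultimately absorbed into $\gm$). The $C^1$-regularity of $\pl E$ is used to keep the constants in the trace and Poincar\'e inequalities on $K_\rho(x_o)\cap E$ uniform in $\rho\in(0,1)$, and ensures the volume lower bound $|K_\rho(x_o)\cap E|\ge c_*|K_\rho|$ with $c_*$ independent of small $\rho$. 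The restriction $p=2$ enters at this very step, because the sharper polynomial dependence $\eta\approx\al^\kappa$ (rather than $\eta\approx 2^{-\al^{-q}}$) in the covering-based expansion of positivity is currently available up to the Neumann boundary only for the quadratic diffusion.
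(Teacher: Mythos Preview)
Your overall strategy is sound and would lead to the desired conclusion, but the route differs from the paper's in an instructive way. The paper does not pass through a logarithmic estimate or reprove the three steps you outline; instead, after restricting to levels $k\in(\mu^+-\tfrac14\om,\mu^+)$ so that $\Phi_+$ vanishes in Proposition~\ref{Prop:energy:Neumann}, it introduces $v:=(\mu^+-u)/\om$, rewrites the energy inequality for $(u-k)_+$ as one for $(v-\ell)_-$ with $\ell=(\mu^+-k)/\om\in(0,\tfrac14)$, absorbs the $C_2$-term under the alternative $\gm C_2\rho<\om$, and then simply observes that the resulting family of inequalities places $v$ in a \emph{parabolic De Giorgi class} up to the Neumann boundary. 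The expansion of positivity with the sharp dependence $\eta\al^\kappa$ is then invoked as a black box from \cite[Proposition~6.1]{Liao}. This is precisely the point stressed in Remark~\ref{Rmk:6:2}: Lemma~\ref{Lm:expansion} rests on De Giorgi classes, not on the PDE structure as in Lemma~\ref{Lm:expansion:p}.

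Your approach instead follows the PDE route, treating $v=\tfrac14\om-(u-k)_+$ as a genuine super-solution and reconstructing the expansion of positivity by hand. This is legitimate, but note a small slip: the logarithmic inequality in your step~(i) cannot be ``derived from Proposition~\ref{Prop:energy:Neumann}'', since that proposition is already a processed Caccioppoli estimate with test function $(u-k)_\pm\z^2$; a logarithmic test function must be inserted directly into the weak formulation of Section~\ref{S:1:4:4} and the trace inequality reapplied to control the boundary integral. What the De Giorgi class route buys is exactly the ability to dispense with any logarithmic estimate---\cite{Liao} obtains the power-type dependence $\al^\kappa$ from the Caccioppoli inequality alone---while your route buys a more self-contained argument at the cost of an extra calculation not recorded in the paper.
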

\begin{proof}
We will employ the energy estimate for sub-solutions in Proposition~\ref{Prop:energy:Neumann}.
For this purpose, we restrict the level $k$ in $(\mu^+-\tfrac14\om,\mu^+)$. In this way the term $\Phi_+$ vanishes
as $k>0$, because of the assumption that $\mu^+-\tfrac14\om>0$,
and the energy estimate becomes
\begin{align*}
	\essup_{t_o-S<t<t_o}&\int_{\{K_R(x_o)\cap E\}\times\{t\}}	
	\z^2 (u-k)_+^2\,\dx+
	\iint_{Q_{R,S}\cap E_T}\z^2|D(u-k)_+|^2\,\dx\dt\\
	&\le
	\gm\iint_{Q_{R,S}\cap E_T}
		\Big[
		(u-k)^{2}_+|D\z|^2 + (u-k)_+^2|\partial_t\z^2|
		\Big]
		\,\dx\dt,\\
	&\quad
	+\int_{\{K_R(x_o)\cap E\}\times \{t_o-S\}} \z^2 (u-k)_+^2\,\dx\\
	&\quad
	+ \gamma  C_2^{2}\iint_{Q_{R,S}\cap E_T} \z^2\chi_{[u>k]}\,\dx\dt.
\end{align*}
Next we introduce the non-negative function $v:=(\mu^+-u)/\om$ and 
let $\ell:=(\mu^+-k)/\om\in(0,\tfrac14)$, such that the above energy estimate is equivalent to
\begin{align*}
	\essup_{t_o-S<t<t_o}&\int_{\{K_R(x_o)\cap E\}\times\{t\}}	
	\z^2 (v-\ell)_-^2\,\dx+
	\iint_{Q_{R,S}\cap E_T}\z^2|D(v-\ell)_-|^2\,\dx\dt\\
	&\le
	\gm\iint_{Q_{R,S}\cap E_T}
		\Big[
		(v-\ell)^{2}_-|D\z|^2 + (v-\ell)_-^2|\partial_t\z^2|
		\Big]
		\,\dx\dt,\\
	&\quad
	+\int_{\{K_R(x_o)\cap E\}\times \{t_o-S\}} \z^2 (v-\ell)_-^2\,\dx\\
	&\quad
	+ \gamma  \frac{C_2^{2}}{\om^2}\iint_{Q_{R,S}\cap E_T} \z^2\chi_{[v<\ell]}\,\dx\dt.
\end{align*}
When we work with the pair of cylinders $Q_{\sig\rho}\subset Q_{\rho}$ for some $\sig\in(0,1)$ and a standard cutoff function $\z$
in $Q_\rho$, the term containing $C_2$ can be absorbed
into other terms on the right-hand side via the assumption that $\gm C_2\rho<\om$,
which with no loss of generality we may always assume.
Now we observe that the above inequalities indicate that the non-negative function $v$
is a member of certain parabolic De Giorgi class. The machinery of De Giorgi can be reproduced
near $S_T$; see \cite[Chapter~III, Section~13]{DB}.
Taking Remark~\ref{Rmk:6:1} into consideration, 
an application of  \cite[Proposition~6.1]{Liao}
would allow us to conclude. 
\end{proof}
\begin{remark}\label{Rmk:6:2}\upshape
Lemma~\ref{Lm:expansion} for $p=2$ parallels Lemma~\ref{Lm:expansion:p} for $p\ge2$,
both concerning the expansion of positivity. 
However, there is a fundamental difference between them. Namely,
Lemma~\ref{Lm:expansion:p} presents the expansion of positivity in the context
of  {\it partial differential equations} (cf.~Lemma~\ref{Lm:sub-solution}), 
whereas Lemma~\ref{Lm:expansion} rests upon {\it parabolic De Giorgi classes} (cf.~\cite{LSU, Liao, W}),
which are much more general and meanwhile lack the same level of understanding as the partial differential equations.
This is ultimately the reason why the case $p>2$ has been excluded in Lemma~\ref{Lm:expansion}.
\end{remark}
\subsection{Proof of Theorem~\ref{Thm:1:2}}
Introduce the cylinder $Q_o=Q_\rho$ and the numbers $\mu^\pm$ and $\om$ as in Section~\ref{S:6:1}.
As in \eqref{Eq:mu-pm}, we may assume 
\begin{equation}\label{Eq:mu-pm-1}
\mu^+-\tfrac14\om\ge \tfrac14\om>0,
\end{equation} 
such that
Lemma~\ref{Lm:expansion} is at our disposal.
Let us first suppose that the condition of Lemma~\ref{Lm:DG:Neumann:1} holds true with $\xi=\frac14$, that is,
\begin{equation}\label{Eq:N:alt:1}
|[ u<\mu^-+\tfrac14\om]\cap Q_{\rho}\cap E_T|\le c_o(\tfrac14\om)^{c}|Q_{\rho}\cap E_T|,
\end{equation}
where we have denoted $(N+2)/2$ by $c$ for simplicity.
Then according to Lemma~\ref{Lm:DG:Neumann:1}, we have either $\gm C_2\rho\ge\om$ or 
\[
 u\ge\mu^-+\tfrac18\om\quad\text{ a.e. in }Q_{\frac12\rho}\cap E_T,
\]
which in turn yields a reduction of oscillation
\begin{equation}\label{Eq:reduc-osc-1-N}
\essosc_{Q_{\frac12\rho}\cap E_T}u\le \tfrac78\om.
\end{equation}

Next, we examine the case when \eqref{Eq:N:alt:1} does not hold, that is,
\[
|[ u<\mu^-+\tfrac14\om]\cap Q_{\rho}\cap E_T|> c_o(\tfrac14\om)^{c}|Q_{\rho}\cap E_T|.
\]
Since $\mu^+-\frac14\om>\mu^-+\frac14\om$, this gives that
\[
|[ \mu^+-u>\tfrac14\om]\cap Q_{\rho}\cap E_T|> c_o(\tfrac14\om)^{c}|Q_{\rho}\cap E_T|,
\]
and consequently, it is not hard to see that
there exists $$t_*\in\big[t_o-\rho^2, t_o-\tfrac12c_o(\tfrac14\om)^{c}\rho^2\big],$$ such that 
\[
|[ \mu^+-u(\cdot, t_*)>\tfrac14\om]\cap K_{\rho}(x_o)\cap E|> \tfrac12 c_o(\tfrac14\om)^{c}|K_{\rho}(x_o)\cap E|.
\]
Using this measure information, thanks to \eqref{Eq:mu-pm-1}, we are allowed to apply Lemma~\ref{Lm:expansion}
and obtain that there exist positive constants $\{\gm,\kappa,\dl,\eta\}$ depending only on the data, such that
either $\gm C_2\rho\ge\om$ or
\[
\mu^+-u(\cdot, t)\ge\eta\om^{1+q}\quad\text{ a.e. in } K_{\rho}(x_o)\cap E,
\]
for $q=c\kappa$ and for all times
\[
t_*+\tfrac12\dl\rho^2\le t\le t_*+\dl\rho^2.
\]
Starting from this pointwise information, finite times (at most $1/\dl$) of further applications of Lemma~\ref{Lm:expansion} with $\al=1$
will give that, after a proper redefinition of $\eta$,
\[
\mu^+-u(\cdot, t)\ge\eta\om^{1+q}\quad\text{ a.e. in } K_{\rho}(x_o)\cap E,
\]
for all times
\[
t\in\big[t_o-\tfrac12c_o(\tfrac14\om)^{c}\rho^2,t_o\big],
\]
which in turn gives a reduction of oscillation
\begin{equation}\label{Eq:reduc-osc-2-N}
\essosc_{Q_{\frac12\rho}(\theta)\cap E_T} u\le \om(1-\eta\om^q),\quad\text{ where  }\theta=c_o(\tfrac14\om)^{c}.
\end{equation}
In order to iterate, we set
\[
\om_1:=\max\Big\{\om(1-\eta\om^q), \gm C_2\rho\Big\},
\]
and select $\rho_1$ satisfying
\[
\rho_1^2=(\tfrac12\rho)^2\theta\quad\implies\quad Q_{\rho_1}\subset Q_{\frac12\rho}(\theta),
\]
and hence \eqref{Eq:reduc-osc-1-N} and \eqref{Eq:reduc-osc-2-N} imply that
\[
\essosc_{Q_{\rho_1}\cap E_T}u\le \om_1.
\]
Repeating in this fashion, we obtain  the following construction for $n\ge0$:
\begin{equation*}
\left\{
\begin{array}{cc}
\rho_o=\rho,\quad  \rho^2_{n+1}=(\tfrac12\rho_n)^2\theta_n,\quad\theta_n=c_o(\tfrac14\om_n)^{c},\\[5pt]
\dsty\om_o=\om,\quad \om_{n+1}=\max\Big\{\om_n(1-\eta\om^q_n),\gm C_2\rho_n\Big\},\\[5pt]
 Q'_n=Q_{\frac12\rho_n}(\theta_n),\quad Q_{n}=Q_{\rho_n},\quad Q_{n+1}\subset Q'_n.
\end{array}\right.
\end{equation*}
By induction, we have for all $n\ge0$ that
\[
\essosc_{Q_{n}\cap E_T} u\le \om_n.
\]
Then the derivation of the modulus of continuity inherent in the above estimate
can be performed as in the Section~\ref{S:bdry-modulus}.

Finally, we recall that the vertex of $Q_r$ could be any point in $\overline{E}_T$
and therefore the above oscillation decay holds in the interior and up to the lateral boundary.
This joint with the continuity up to the initial level presented in Proposition~\ref{Prop:bdry:Initial}
would allow us to conclude the proof of Theorem~\ref{Thm:1:2},
just like in Section~\ref{S:Thm-proof} for the Dirichlet problem.
\section{Uniform approximations}\label{S:approx}
Existence of weak solutions to the Stefan problem \eqref{Eq:1:1}, given proper initial and boundary conditions,
is an issue of independent interest.
A standard device in the construction of weak solutions to boundary value problems
consists in first solving regularized versions, deriving {\it a priori} estimates that suggest where to find a solution,
and then obtaining a solution in a proper limiting process via the {\it a priori} estimates and compactness arguments.
In general, such a limiting process requires the function $\bl{A}(x,t,u, \xi)$
to satisfy more stringent structural conditions than \eqref{Eq:1:2}, such as monotonicity in $\xi$,
cf.~\cite[Chapter~V, Theorem~6.7]{LSU}.
The so-obtained limit function  will be in the function space
\[
	L^{\infty}\big(0,T;L^2(E)\big)\cap L^p\big(0,T; W^{1,p}(E)\big),
\]
and meanwhile, verifies one of the integral formulations in Section~\ref{S:1:2}, cf.~\cite{Friedman-68, Urbano-97} and
\cite[Chapter~V, Section~9]{LSU}.
Nevertheless, Theorems~\ref{Thm:1:1} -- \ref{Thm:1:2}
do not grant continuity to this kind of solution as it does not possess time derivative in the Sobolev sense.
{\it A priori} estimates on the time derivative are generally unavailable.
On the other hand, the importance of continuous weak solutions (temperatures) lies in their physical bearings.

The purpose of the present section is to exhibit   that the arguments presented in the previous sections
permit us to identify a limit function, which is continuous
with the same kind of moduli as in Theorems~\ref{Thm:1:1} -- \ref{Thm:1:2}, with the aid of the Ascoli-Arzela theorem
(cf.~\cite[Chapter~5, Section~19]{DB-RA}).
This, joint with the existence results described above, will yield continuous weak solutions
without any knowledge on the time derivative.

To this end, let $H_\varep(s)$ be the mollification with $\varep\in(0,1)$, by the standard Friedrichs kernel
supported in $(-\varep,\varep)$ (cf.~\cite[Chapter~6, Section~18]{DB-RA}), of the function
\begin{equation*}
H(s):=\left\{
\begin{array}{cl}
0,\quad&s>0,\\[5pt]
-\nu,\quad& s\le0.
\end{array}\right.
\end{equation*}
Here $\nu$ is from the definition of $\be(\cdot)$. 
Clearly, the function $s\mapsto s+H_\varep(s)$ is an approximation of $\be(s)$.

Consider the regularized Dirichlet problem:
\begin{equation}\label{Dirichlet-reg}
\left\{
\begin{aligned}
&\pl_t [u+H_\varep(u)] -\dvg\bl{A}(x,t,u, Du) = 0\quad \text{ weakly in }\> E_T\\
&u(\cdot,t)\Big|_{\partial E}=g(\cdot,t)\quad \text{ a.e. }\ t\in(0,T]\\
&u(\cdot,0)=u_o.
\end{aligned}
\right.
\end{equation}
Here the boundary datum $g$ and the initial datum $u_o$ are as in Theorem~\ref{Thm:1:1}.
For each fixed $\varep$,  
the notion of  solution to \eqref{Dirichlet-reg}
can be defined via a similar integral identity as in Section~\ref{S:1:4:3}.
A main difference now is that the function space for solutions becomes
\begin{equation}\label{Eq:func-space-approx}
\left\{
\begin{array}{cc}
\dsty\int_0^u [1+H'_{\varep}(s)]s\,\d s\in C\big(0,T;L^1(E)\big),\\[5pt]
u\in L^p\big(0,T; W^{1,p}(E)\big).
\end{array}\right.
\end{equation}
This notion does not require any {\it a priori} knowledge on the time derivative
and is similar to the one for \eqref{Eq:p-Laplace} in \cite[Chapter~II]{DB}, cf.~\eqref{Eq:func-space-1}.
The following theorem can be viewed as a ``cousin" of Theorem~\ref{Thm:1:1}.

\begin{theorem}\label{Thm:7:1}
Let $\{u_\varep\}$ be a family of weak solutions to the Dirichlet problem \eqref{Dirichlet-reg} 
under the condition \eqref{Eq:1:2} with $p\ge2$. Assume that \eqref{D}, \eqref{I} and \eqref{geometry} hold. 
Then $\{u_\varep\}$ is equibounded by $M:=\max\{\|u_o\|_{\infty,E},\|g\|_{\infty,S_T}\}$
and is equicontinuous in 
$\overline{ E_T}$.
More precisely, there exist positive constants $\gm$ and $q$ depending only on the data
and $\al_*$, and a modulus of continuity $\boldsymbol\om(\cdot)$,
determined by the data, $\al_*$, $\bar\rho$, $M$, $\boldsymbol\om_{o}(\cdot)$ and $\boldsymbol\om_{g}(\cdot)$,
independent of $\varep$, such that
	\begin{equation*}
	\big|u_{\varep}(x_1,t_1)-u_{\varep}(x_2,t_2)\big|
	\le
	\boldsymbol\om\!\left(|x_1-x_2|+|t_1-t_2|^{\frac1p}\right)+\gm\varep^{\frac1{1+q}},
	\end{equation*}
for every pair of points $(x_1,t_1), (x_2,t_2)\in \overline{E_T}$.
In particular, there exists $\sig\in(0,1)$ depending only on the data and $\al_*$,
such that if  
\[
\boldsymbol\om_g(r)\le \frac{C_g}{|\ln r|^{\lm}}\quad\text{ and }\quad\boldsymbol \om_o(r)\le \frac{C_{u_o}}{|\ln r|^{\lm}}\quad\text{ for all }r\in(0,\bar\rho),
\]
where $C_g,\,C_{u_o}>0$ and $\lm>\sig$,
then the modulus of continuity is
$$\boldsymbol\om(r)=C \Big(\ln \frac{\bar\rho}{r}\Big)^{-\frac{\sig}2}\quad\text{ for all }r\in(0,  \bar\rho)$$ with some $C>0$
 depending on the data, $\lm$, $\al_*$,   $M$, $C_{u_o}$ and $C_{g}$.
\end{theorem}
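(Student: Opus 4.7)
\textbf{Proof proposal for Theorem~\ref{Thm:7:1}.} The plan is to run the same machinery developed in Sections~\ref{S:2}--\ref{S:Thm-proof} for the Stefan problem, this time on the \emph{regularized} equation $\pl_t[u+H_\varep(u)]-\dvg\bl A(x,t,u,Du)=0$, and to produce quantitative oscillation estimates whose constants are independent of $\varep$, up to an additive correction of the form $\gm\varep^{\frac1{1+q}}$. First I would establish the uniform sup-bound $\|u_\varep\|_{\infty,E_T}\le M$ by a standard De~Giorgi truncation argument applied to $(u_\varep-M)_+$ and $(u_\varep+M)_-$: since $H_\varep$ is smooth and monotone non-decreasing, these truncations are admissible and the boundary/initial data force the sets $[u_\varep>M]$ and $[u_\varep<-M]$ to have zero measure.

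The next step is an energy estimate analogous to Proposition~\ref{Prop:2:1}. Using the test function $\z^p(u_\varep-k)_\pm$ against the smoothed equation and applying the chain rule, the time term reads
\[
\iint \pl_t\mathcal B_\varep(u_\varep,k)\,\z^p\,\dx\dt,\qquad \mathcal B_\varep(s,k):=\int_k^s(r-k)_\pm\bigl[1+H'_\varep(r)\bigr]\dr.
\]
The crucial observation is that $H'_\varep\ge 0$ is supported in $(-\varep,\varep)$ with $\int H'_\varep=\nu$; consequently, whenever the level $k$ lies \emph{outside} the mushy strip, i.e.\ $k\ge\varep$ (respectively $k\le-\varep$), the function $\mathcal B_\varep(u_\varep,k)$ collapses to $\tfrac12(u_\varep-k)_\pm^2$, and the energy estimate is identical to the one for the parabolic $p$-Laplacian \eqref{Eq:p-Laplace} with constants independent of $\varep$. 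Inside the strip, the extra term is controlled by $\nu$ times the measure $|[|u_\varep|<\varep]\cap Q|$, which plays precisely the role of $\iint\nu\chi_{[u\le 0]}$ in Proposition~\ref{Prop:2:3}.

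With these observations, the De~Giorgi type Lemmas~\ref{Lm:DG:1}--\ref{Lm:DG:2}, the measure-shrinking Lemma~\ref{Lm:shrink:1}, and the alternative Lemmas~\ref{Lm:reduc-osc-1}--\ref{Lm:reduc-osc-2} transfer verbatim, with the threshold ``$k\ge 0$'' or ``$|\mu^-|\le\dl\xi\om$'' replaced throughout by ``$k\ge\varep$'' or ``$|\mu^-|\le\dl\xi\om-\varep$''. The iterative reduction of oscillation from Sections~\ref{S:reduc-osc-1}--\ref{S:bdry-modulus} then produces, along a nested family of intrinsic cylinders $\{Q_n\}$, a sequence $\{\om_n\}$ satisfying
\[
\om_{n+1}\le\max\Bigl\{\om_n(1-\eta\om_n^q),\,A\rho_n^{\al_o},\,2\osc_{\widetilde Q_n\cap S_T}g,\,C\varep^{\frac1{1+q}}\Bigr\},
\]
where the last entry appears because the argument of Section~\ref{S:reduc-osc-3} cannot refine the oscillation below the scale where $\dl_n\xi\om_n=\bar\xi\xi\om_n^{1+q}$ becomes comparable to $\varep$, i.e.\ below $\om_n\approx\varep^{\frac1{1+q}}$. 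The same variant handles the interior case (Section~\ref{S:interior}) and the initial layer (Proposition~\ref{Prop:bdry:Initial}), the latter being a pure H\"older estimate for $p$-Laplacian-like behaviour and therefore $\varep$-independent.

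Extracting the modulus from this recursion along the lines of Section~\ref{S:bdry-modulus} gives a bound
\[
\osc_{Q_r\cap E_T}u_\varep\le\boldsymbol\om(r)+\gm\varep^{\frac1{1+q}},
\]
uniformly in $\varep$, with $\boldsymbol\om(\cdot)$ of logarithmic type under the hypotheses on $\om_o$, $\om_g$. Combined with the equibounded\-ness $\|u_\varep\|_\infty\le M$, this yields the claimed estimate. The main obstacle I anticipate is the bookkeeping in the energy estimate of Lemma~\ref{Lm:energy}: the term $\iint\nu\chi_{[u\le 0]}(k-u)|\pl_t\z^p|$ must be replaced by $\iint H'_\varep(u_\varep)(k-u_\varep)_-|\pl_t\z^p|$, and the counterpart of \eqref{Eq:opposite:3} (lower bound on the mass of the mushy set) must be re-derived with the smoothed indicator $H'_\varep$; this requires verifying that one may still trade a lower bound of $\iint H'_\varep\,\dx\dt$ for an $L^2$ control of $(u_\varep-k)_-$ via \eqref{Eq:time:2}, at the price of the $\varep^{1/(1+q)}$ floor above. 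Once this is in place, Ascoli--Arzel\`a furnishes a continuous limit inheriting the modulus $\boldsymbol\om(\cdot)$, which, combined with standard existence results for \eqref{Dirichlet-reg}, yields a continuous weak solution to \eqref{Dirichlet} in the function space \eqref{Eq:func-space-approx}.
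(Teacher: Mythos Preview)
Your proposal is correct and follows essentially the same route as the paper: derive the regularized energy estimate with $\int_k^u H'_\varep(s)(s-k)_\pm\,\d s$ in place of $\Phi_\pm$, observe that the thresholds $k\ge0$ / $k\le0$ become $k\ge\varep$ / $k\le-\varep$, and re-run Sections~\ref{S:reduc-osc-1}--\ref{S:bdry-modulus} with the extra alternative $\om\le\gm\varep^{1/(1+q)}$ arising from the requirement $\varep\lesssim\dl\xi\om=\bar\xi\xi\om^{1+q}$. The obstacle you anticipate in Lemma~\ref{Lm:energy} is precisely the one the paper addresses, via the pointwise bound $\int_u^k H'_\varep(s)(s-k)_-\,\d s\ge(k-\varep)\int_u^k H'_\varep(s)\,\d s$ (valid since $\operatorname{supp}H'_\varep\subset(-\varep,\varep)$), which recovers the analogue of \eqref{Eq:opposite:3} and feeds into \eqref{Eq:time:2} exactly as you outline.
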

Similarly we may consider a regularized Neumann problem:
\begin{equation}\label{Neumann-reg}
\left\{
\begin{aligned}
&\pl_t [u+H_\varep(u)]-\dvg\bl{A}(x,t,u, Du) = 0\quad \text{ weakly in }\> E_T\\
&\bl{A}(x,t,u, Du)\cdot {\bf n}=\psi(x,t, u)\quad \text{ a.e. }\ \text{ on }S_T\\
&u(\cdot,0)=u_o(\cdot).
\end{aligned}
\right.
\end{equation}
Here the boundary datum $\psi$ and the initial datum $u_o$ are as in Theorem~\ref{Thm:1:2}. 
The notion of  solution to \eqref{Neumann-reg} can be defined via a similar integral identity as in Section~\ref{S:1:4:4}.
In particular, the function space in \eqref{Eq:func-space-approx} is used.
The following theorem can be regarded as a ``cousin" of Theorem~\ref{Thm:1:2}.

\begin{theorem}\label{Thm:7:2}
Let $\{u_\varep\}$ be a family of weak solutions to the Neumann problem 
\eqref{Neumann-reg} under the condition \eqref{Eq:1:2} with $p=2$. 
Assume that $\pl E$ is of class $C^1$, and \eqref{N-data} and \eqref{I} hold. 
 Then $\{u_\varep\}$ is equibounded by a constant $M$ depending only on
 the data, $|E|$, $T$, $C_2$, $\|u_o\|_{\infty,E}$,
 and the structure of $\pl E$, and is equicontinuous in $\overline{E_T}$.
 More precisely, there exists 
a modulus of continuity $\boldsymbol\om(\cdot)$,
determined by the data, the structure of $\pl E$, $C_2$, $M$  and $\boldsymbol\om_{o}(\cdot)$, 
independent of $\varep$, such that  
	\begin{equation*}
	\big|u_{\varep}(x_1,t_1)-u_{\varep}(x_2,t_2)\big|
	\le
	\boldsymbol\om\!\left(|x_1-x_2|+|t_1-t_2|^{\frac12}\right)+4\varep,
	\end{equation*}
for every pair of points $(x_1,t_1), (x_2,t_2)\in \overline{E_T}$.
In particular, there exists $\sig\in(0,1)$ depending only on the data, $C_2$ and the structure of $\pl E$,
such that if 
\[
\boldsymbol \om_o(r)\le \frac{C_{u_o}}{|\ln r|^{\lm}}\quad\text{ for all }r\in(0,1),
\]
where $C_{u_o}>0$ and $\lm>\sig$,
then the modulus of continuity is 
$$\boldsymbol\om(r)=C \Big(\ln \frac{1}{r}\Big)^{-\frac{\sig}2}\quad\text{ for all }r\in(0, 1),$$ with some $C>0$
 depending on the data, the structure of $\pl E$, $C_2$, $M$ and $C_{u_o}$.
\end{theorem}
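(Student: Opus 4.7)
The plan is to reproduce the oscillation-decay scheme of Theorem~\ref{Thm:1:2} for each regularized solution $u_\varep$ with constants independent of $\varep$, observing that the smoothed graph $\be_\varep(s):=s+H_\varep(s)$ coincides with the identity outside $(-\varep,\varep)$, so the iteration of Section~6 proceeds verbatim as long as the relevant truncation levels stay outside that interval; once the oscillation falls below $4\varep$, the remaining deficit is absorbed into the stated additive error. A final application of the Ascoli-Arzela theorem transfers the modulus to the limit.

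The first task is the uniform $L^\infty$ bound. For each fixed $\varep$, $u_\varep$ is a bounded weak solution of a quasilinear parabolic equation with a $C^1$, strictly increasing time-derivative nonlinearity $\be_\varep$. A standard De Giorgi truncation of $(u_\varep-k)_\pm$, combined with the trace inequality to absorb the bounded Neumann flux (exactly as in the last part of the proof of Proposition~\ref{Prop:energy:Neumann}), delivers the equibound $\|u_\varep\|_{\infty,E_T}\le M$ with $M$ depending only on the data, $|E|$, $T$, $C_2$, $\|u_o\|_{\infty,E}$ and the structure of $\pl E$, cf.~\cite[Chapter~V]{DB}.

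Next I would derive the analog of Proposition~\ref{Prop:energy:Neumann} with $\be_\varep$ in place of $\be$. Since $\be_\varep$ is smooth and monotone, the multi-valued contribution $\Phi_\pm$ is replaced by a single-valued term arising from the primitive
\[
P_\varep^{\pm}(u,k):=\pm\int_k^u H_\varep'(s)(s-k)_\pm\,\d s,
\]
which is non-negative, supported in $\{|u|<\varep\}$, and vanishes identically whenever the truncation level satisfies $|k|\ge\varep$. Accordingly, Lemma~\ref{Lm:DG:Neumann:1} and the expansion-of-positivity Lemma~\ref{Lm:expansion} carry over with $\varep$-uniform constants provided the admissible levels lie outside $(-\varep,\varep)$; under the running alternative $\mu^+-\tfrac14\om\ge\tfrac14\om$ this is guaranteed by $\om>4\varep$. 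Reproducing the induction in the proof of Theorem~\ref{Thm:1:2} then produces sequences $\{\om_n,\rho_n\}$ with $\essosc_{Q_n\cap E_T}u_\varep\le\om_n$ obeying the same recurrence as in Section~\ref{S:bdry-modulus}, but the iteration is run only while $\om_n>4\varep$; as soon as $\om_n\le 4\varep$ the oscillation is already within the error term and the iteration terminates. This yields the $\varep$-uniform estimate
\[
\essosc_{Q_r\cap E_T}u_\varep\le\boldsymbol\om(r)+4\varep
\]
for the modulus $\boldsymbol\om$ constructed in Section~\ref{S:bdry-modulus}. The initial-level continuity is obtained by the same adaptation of Proposition~\ref{Prop:bdry:Initial}, whose argument again avoids the smoothed region through the pointwise choice of level. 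Combining the three regimes as in Section~\ref{S:Thm-proof} yields the $\varep$-uniform modulus, and the Ascoli-Arzela theorem extracts a uniformly convergent subsequence $u_{\varep_k}\to u$ in $\overline{E_T}$; passing to the limit transfers the modulus to $u$.

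The main obstacle is verifying that the replacement $\be\rightsquigarrow\be_\varep$ preserves the $\varep$-uniform quantitative dependence of the expansion-of-positivity constants $\{\eta,\kappa,\dl\}$ in Lemma~\ref{Lm:expansion}. These are extracted from the parabolic De Giorgi class machinery of \cite{Liao, W}, and one must check that, with the smoothed jump term $P_\varep^{-}$ discarded at admissible levels, $u_\varep$ belongs to exactly the same De Giorgi class as a solution of the unregularized problem, uniformly in $\varep$. Once this uniformity is secured, the $\varep$-error in the recursion of $\{\om_n,\rho_n\}$ does not accumulate: it enters only as the additive stopping threshold $4\varep$.
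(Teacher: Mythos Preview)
Your proposal is correct and follows the paper's own route: the paper handles Theorem~\ref{Thm:7:2} by declaring that the argument for Theorem~\ref{Thm:7:1} adapts to the Neumann setting of Section~6, the key point being that once the relevant truncation level is at distance $\varep$ from the origin the smoothed jump term drops out and the energy estimate is literally the unregularized one, so the De Giorgi--class constants $\{\eta,\kappa,\dl\}$ in Lemma~\ref{Lm:expansion} are automatically $\varep$-independent and your flagged ``obstacle'' dissolves.

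Two small corrections to your write-up. First, $P_\varep^{+}(u,k)$ is not supported in $\{|u|<\varep\}$, nor does it vanish for all $|k|\ge\varep$ (take $k<-\varep$ and $u>0$); the correct statement is that $P_\varep^{+}$ vanishes when $k\ge\varep$ and $P_\varep^{-}$ vanishes when $k\le-\varep$, and the former is exactly what you need since the levels in Lemma~\ref{Lm:expansion} satisfy $k>\mu^+-\tfrac14\om\ge\tfrac14\om>\varep$. Second, in your final paragraph it is $P_\varep^{+}$, not $P_\varep^{-}$, that must be shown to drop out: Lemma~\ref{Lm:expansion} works with sub-solutions and the truncation $(u-k)_+$. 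Note also that Lemma~\ref{Lm:DG:Neumann:1} needs no level restriction at all: as in the paper's treatment of Lemma~\ref{Lm:DG:1}, the non-negative $P_\varep^{-}$ term on the left is discarded and the one on the right is dominated by $\nu\iint(u-k)_-|\pl_t\z^2|\,\dx\dt$, which is absorbed into the existing right-hand side.
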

\begin{remark}\upshape
Theorems~\ref{Thm:7:1} -- \ref{Thm:7:2} lay the foundation
of obtaining continuous solutions to the Dirichlet problem \eqref{Dirichlet} or the Neumann problem \eqref{Neumann},
once we can construct solutions $\{u_\varep\}$ to \eqref{Dirichlet-reg} or \eqref{Neumann-reg} and identify
the limit function $u$ as a proper weak solution to \eqref{Dirichlet} or \eqref{Neumann}.
Clearly, the so-obtained solution $u$ will enjoy the logarithmic type modulus of continuity.
\end{remark}

We will treat Theorem~\ref{Thm:7:1} only as Theorem~\ref{Thm:7:2} can be dealt with in a similar way.
Concentration will be made on examining the boundary  arguments in Section~\ref{S:bdry}.
The subscript $\varep$ will suppressed from $u$, $\mu^\pm$, $\om$, $\theta$, $\widetilde{\theta}$, etc.
The idea is to adapt the arguments in  Section~\ref{S:bdry} and to determine the quantities, such as $\bar\xi$, $\xi$, $\eta$, $A$,
independent of $\varep$.
In this way the reduction of oscillation of $u$ can be achieved just like in  Section~\ref{S:bdry}, independent of $\varep$.

Let us first observe that, the arguments in  Section~\ref{S:bdry} 
 hinges solely on the energy estimates in Proposition~\ref{Prop:2:1}.
Now the test function
\[
\zeta^p(x,t) \big(u(x,t)-k\big)_\pm
\]
against \eqref{Dirichlet-reg}$_1$ and
with $k$ satisfying \eqref{Eq:k-restriction}, 
is justified modulo an averaging process in the time variable; this can be done as in \cite[Proposition~3.1]{BDL}.
No {\it a priori} knowledge on the time derivative is needed, because now $s\mapsto s+H_\varep(s)$
is a smooth, increasing function.

After standard calculations, the  energy estimates for the weak solution $u$ to \eqref{Dirichlet-reg} becomes,
omitting the reference to $x_o$,
\begin{equation}\label{Eq:energy-approx}
\begin{aligned}
	\essup_{t_o-S<t<t_o}&\Big\{\int_{K_R\times\{t\}}	
	\z^p (u-k)_\pm^2\,\dx \pm \int_{K_R\times\{t\}}\int_k^u H_\varep'(s)(s-k)_\pm\,\d s\,\z^p\dx \Big\}\\
	&\quad+
	\iint_{Q_{R,S}}\z^p|D(u-k)_\pm|^p\,\dx\dt\\
	&\le
	\gm\iint_{Q_{R,S}}
		\Big[
		(u-k)^{p}_\pm|D\z|^p + (u-k)_\pm^2|\partial_t\z^p|
		\Big]
		\,\dx\dt\\
			&\quad\pm\iint_{Q_{R,S}}\int_k^u H_\varep'(s)(s-k)_\pm\,\d s |\pl_t\z^p|\, \dx\dt\\
			&\quad
	+\int_{K_R\times \{t_o-S\}} \z^p (u-k)_\pm^2\,\dx\\
	&\quad  \pm \int_{K_R\times\{t_o-S\}}\int_k^u H_\varep'(s)(s-k)_\pm\,\d s\,\z^p\dx.
\end{aligned}
\end{equation}
The three terms containing $H'_\varep$ here play the role of $\Phi_{\pm}$ in Proposition~\ref{Prop:2:1},
which preserve the singularity of $\be(\cdot)$ at the origin as $\varep\to0$.

Let us consider the case of super-solution, i.e. $(u-k)_-$. 
The term containing $H'_\varep$ (together with the minus sign in the front) on the left-hand side is non-negative
and hence can be discarded. The first term containing $H'_\varep$ on the right-hand side
is estimated by
\[
\iint_{Q_{R,S}}\int_u^k H_\varep'(s)(s-k)_-\,\d s |\pl_t\z^p|\, \dx\dt
\le \nu\iint_{Q_{R,S}}(u-k)_- |\pl_t\z^p|\,\dx\dt.
\]
The second term containing $H'_\varep$ on the right-hand side is discarded 
because now $\z=0$ on $\pl_{\mathcal{P}}Q_{R,S}$.
Using these remarks, one can perform the De Giorgi iteration in Lemma~\ref{Lm:DG:1}
and reach the same conclusion.

As for Lemma~\ref{Lm:DG:2}, now the condition becomes $|\mu^-|\le\xi\om$ and, letting $k=\mu^-+\xi\om$, 
\begin{equation}\label{Eq:measure-2-approx}
\iint_{Q_{\rho}(\theta)}\int_u^k H_\varep'(s) \,\d s\, \dx\dt
\le\xi\om|[u\le \mu^-+\tfrac12\xi\om]\cap Q_{\frac12\rho}(\theta)|.
\end{equation}
Then the same conclusion holds as in  Lemma~\ref{Lm:DG:2}.

With the modified versions of  Lemma~\ref{Lm:DG:1} and Lemma~\ref{Lm:DG:2} at hand,
 we can start the arguments as in Section~\ref{S:bdry}.
 Notice also that Proposition~\ref{Prop:2:2} now holds for $k\ge\varep$
 in the case of sub-solution and for $k\le-\varep$  in the case of super-solution.

If \eqref{Eq:alternative}$_1$ holds, then thanks to \eqref{Eq:mu-pm}$_1$, the function $(u-k)_+$ with $k=\mu^+-2^{-n}\om$ will
satisfy the energy estimate in Proposition~\ref{Prop:2:2} for all $n\ge2$, provided we assume that $\frac14\om\ge\varep$.
In this case, the reduction of oscillation can be achieved as in Section~\ref{S:reduc-osc-4}, cf.~Lemma~\ref{Lm:reduc-osc-1}.
Consequently we need only to examine the case when \eqref{Eq:alternative}$_2$ holds.
This case splits into two sub-cases that parallel Sections~\ref{S:reduc-osc-1} -- \ref{S:reduc-osc-4},
which we now examine.
 
 The conclusion \eqref{Eq:reduc-osc-1} is reached as in Section~\ref{S:reduc-osc-1}.
 The only change is that we need to assume \eqref{Eq:measure-2-approx} instead of \eqref{Eq:measure-2}.
 
Let us examine Section~\ref{S:reduc-osc-2}.
A key change appears in Section~\ref{S:reduc-osc-2} as \eqref{Eq:opposite:2} becomes
the opposite of \eqref{Eq:measure-2-approx}, that is, letting $k=\mu^-+\xi\om$,
\[
\iint_{Q_{\rho}(\theta)}\int_u^k H_\varep'(s) \,\d s\, \dx\dt
>\xi\om|[u\le \mu^-+\tfrac12\xi\om]\cap Q_{\frac12\rho}(\theta)|.
\]
This, joint with \eqref{Eq:opposite:1},  yields a variant of \eqref{Eq:opposite:3},
that is, for all $r\in[2\rho,8\rho]$ we have
\begin{align*}
\iint_{Q_{r}(\theta)}\int_u^k H_\varep'(s)\,\d s\, \dx\dt
&>\xi\om|[u\le \mu^-+\tfrac12\xi\om]\cap Q_{\frac12\rho}(\theta)|\\
&\ge c_o(\xi\om)^{b}|Q_{\frac12\rho}(\theta)|\ge\widetilde{\gm}(\xi\om)^{b}|Q_{r}(\theta)|,
\end{align*}
where just like in  \eqref{Eq:opposite:3} we have set $\widetilde{\gm}=c_o16^{-N-p}$ and
$b=1+\tfrac{N+p}p$.
Using \eqref{Eq:time:2}, this in turn gives 
\begin{align*}
&\essup_{t_o-\bar{\theta}r^p<t<t_o}\int_{K_{r}}\int_u^k H_\varep'(s)(s-k)_-\,\d s\, \dx\\
&\qquad\ge(k-\varep)\essup_{t_o-\bar{\theta}r^p<t<t_o}\int_{K_{r}}\int_u^k H_\varep'(s)\,\d s\, \dx\\
&\qquad\ge\widetilde{\gm}(k-\varep)(\xi\om)^{b}|K_{r}|\\
&\qquad\ge \widetilde{\gm}(k-\varep)(\xi\om)^{b}
(\bar\dl\xi\om)^{-2}\essup_{t_o-\bar{\theta} r^p<t<t_o}\int_{K_r\times\{t\}}[u-(\mu^-+\bar\dl\xi\om)]_-^2\,\dx.
\end{align*}
Consequently, Lemma~\ref{Lm:energy} holds true in view of the energy estimate \eqref{Eq:energy-approx}, 
and Lemmas~\ref{Lm:shrink:1} -- \ref{Lm:DG:3} can be reproduced, once the condition
$\varep\le\frac14\dl\xi\om\equiv \frac14\xi\bar\xi\om^{1+q}$ is imposed;
 otherwise, it just gives us an extra control $\om\le \gm \varep^{\frac1{1+q}}$ for some positive $\gm(\text{data},\al_*)$.
The rest of the arguments in Sections~\ref{S:reduc-osc-3} -- \ref{S:bdry-modulus} remains unchanged.

The reduction of interior oscillation can be reproduced as in Section~\ref{S:interior}.
Lemmas~\ref{Lm:DG:int} -- \ref{Lm:DG:initial:1} are unchanged. Lemma~\ref{Lm:expansion:p}
still holds if we impose $\mu^+-\frac14\om>\varep$.
This is not a problem, as we may assume 
that $\mu^+-\frac14\om\ge\frac14\om>\varep$, cf.~\eqref{Eq:mu-pm};
 otherwise, it just gives us an extra control $\om\le 4\varep$.
 
 The derivation of modulus of continuity runs similar to Section~\ref{S:bdry-modulus};
  here it is affected only by an extra control of order $\varep$ or $\varep^{\frac1{1+q}}$.
Therefore we may conclude the proof of Theorem~\ref{Thm:7:1}.
\appendix

\section{Boundary regularity for the parabolic $p$-Laplacian}\label{A:1}
Let $\widetilde{Q}_o$ be as in Section~\ref{S:bdry} with its vertex $(x_o,t_o)\in S_T$ and $t_o\ge\rho^{p-1}$.
Define $\mu^{\pm}$ to be the supreme/infimum of $u$ over $\widetilde{Q}_o\cap E_T$.
Introduce parameters $a\in(0,1)$, $\om>0$ and 
\begin{equation}\label{Eq:level:A}
k=\left\{
\begin{array}{ll}
\dsty\mu^+- a \om\ge \sup_{\widetilde{Q}_o\cap S_T}g,&\quad\text{ for }(u-k)_+,\\[5pt]
\dsty\mu^-+ a \om\le \inf_{\widetilde{Q}_o\cap S_T}g,&\quad\text{ for }(u-k)_-.
\end{array}\right.
\end{equation}
We will consider a cylinder $Q_r(\theta)$ with its vertex $(x_o,t_o)$ and $\theta=(\xi \om)^{2-p}$,
for some $\xi\in(0,1)$ to be determined later. We assume that
 $Q_r(\theta)\subset \widetilde{Q}_o$ for all $r\in[\frac12\rho, 2\rho]$. Now we state the boundary regularity
 for the parabolic $p$-Laplacian with $p\ge2$, which has been sketchily presented in \cite[Chapter~III, Section~12]{DB}.
\begin{proposition}\label{Prop:A:1}
Let $\bar\theta=(a\om)^{2-p}$ for   $a\in(\xi,1)$ and 
let $\z$ be a piecewise smooth function in $Q_{r}(\bar\theta)$ that vanishes on $\pl_{\mathcal{P}}Q_{r}(\bar\theta)$. 
Suppose that $u$ verifies
the following energy inequalities in $Q_{r}(\bar\theta)$ for some positive $\widetilde{C}$,
\begin{align*}
	\essup_{t_o-\bar\theta r^p<t<t_o}&\int_{K_r(x_o)\times\{t\}}	
	\z^p (u-k)_\pm^2\,\dx+
	\iint_{Q_{r}(\bar\theta)}\z^p|D(u-k)_\pm|^p\,\dx\dt\\
	&\le
	\widetilde{C}\iint_{Q_{r}(\bar\theta)}
		\Big[
		(u-k)^{p}_\pm|D\z|^p + (u-k)_\pm^2|\partial_t\z^p|
		\Big]
		\,\dx\dt,
\end{align*}
with the choice of $k$ in \eqref{Eq:level:A} for all $a\in(\xi,1)$
and all $r\in[\frac12\rho, 2\rho]$.
Then there exists $\xi\in(0,1)$ determined by $\{\widetilde{C}, N, p,\al_*\}$, such that
\[
\pm(\mu^{\pm}-u)\ge\tfrac12\xi\om\quad\text{ a.e. in }Q_{\frac12\rho}(\theta)\quad\text{ where }\theta=(\xi \om)^{2-p}.
\]
\end{proposition}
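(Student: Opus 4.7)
My plan is to treat the $(u-k)_-$ case; the $(u-k)_+$ case is entirely symmetric. I fix $k=\mu^-+\xi\om$ and invoke \eqref{Eq:level:A}$_2$, which gives $k\le \inf_{\widetilde{Q}_o\cap S_T}g$. Extending $u$ by the constant value $k$ across $S_T$ makes $(u-k)_-$ vanish outside $E_T$, and the geometric condition \eqref{geometry} supplies the slice-wise mass bound
\[
|[u(\cdot,t)\ge k]\cap K_\rho(x_o)|\ge |K_\rho(x_o)\setminus E|\ge \al_*|K_\rho(x_o)|
\]
for every $t\in(t_o-\bar\theta\rho^p,t_o)$. The argument will proceed in the standard two-step boundary De Giorgi scheme: first use the boundary mass density to produce a small-measure hypothesis, then run a De Giorgi iteration to convert it into a pointwise lower bound.

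For the first step I would apply the given energy inequality at the shrinking levels $k_j=\mu^-+\xi\om/2^j$, $j=1,\ldots,j_*$, with a time-independent cutoff supported in $K_{2\rho}(x_o)$, together with the slice-wise De Giorgi--Poincar\'e inequality of \cite[Chapter~I, Lemma~2.2]{DB}. The boundary mass bound furnishes the denominator required by that lemma, and H\"older's inequality in time then produces the cascade
\[
|A_{j+1}|^{p/(p-1)}\le \tfrac{\gm}{\al_*}\,|A_1|^{1/(p-1)}\big(|A_j|-|A_{j+1}|\big),\qquad A_j=[u<k_j]\cap Q_\rho(\bar\theta),
\]
exactly as in Lemma~\ref{Lm:shrink:1}. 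Summing over $j=1,\ldots,j_*$ and choosing $j_*=j_*(\widetilde{C},N,p,\al_*)$ large enough yields $|A_{j_*}|\le \nu_0|Q_\rho(\bar\theta)|$ with $\nu_0$ as small as required. For the second step, the standard De Giorgi iteration at levels $k_n'=\mu^-+\xi\om/2^{j_*+1}+\xi\om/2^{j_*+n+1}$ and nested cylinders shrinking to $Q_{\rho/2}(\bar\theta)$, using the parabolic Sobolev embedding \cite[Chapter~I, Proposition~3.1]{DB} and the given energy inequality, produces the recursion $Y_{n+1}\le C^nY_n^{1+\frac{1}{N+2}}$; by \cite[Chapter~I, Lemma~4.1]{DB} this forces $Y_n\to 0$ provided $Y_0\le \nu_0$. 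The outcome is $u\ge \mu^-+\xi\om/2^{j_*+1}$ a.e. in $Q_{\rho/2}(\bar\theta)$; absorbing the factor $2^{-(j_*+1)}$ into a redefined $\xi=\xi(\widetilde{C},N,p,\al_*)$ and then picking $a\ge\xi$ so that $\theta=(\xi\om)^{2-p}\le\bar\theta$, we obtain $Q_{\rho/2}(\theta)\subset Q_{\rho/2}(\bar\theta)$ and hence the stated pointwise lower bound.

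The hard part will be the clean execution of the first step: the geometric mass bound lives slice by slice, whereas the energy inequality only delivers a time-integrated gradient control, so the two must be reconciled by running the lemma of \cite[Chapter~I, Lemma~2.2]{DB} at each time $t$ and then integrating via H\"older --- the same bookkeeping that drives Lemma~\ref{Lm:shrink:1}. The key quantitative feature is that each pass through the cascade contributes a gain of order $j_*^{-(p-1)/p}$, so iterating $j_*$ times depresses the measure below the threshold $\nu_0$ dictated by the fast-decay lemma. Once this preparatory measure estimate is in hand, the second step is routine, and the dependence $\xi=\xi(\widetilde{C},N,p,\al_*)$ drops out automatically.
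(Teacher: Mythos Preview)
Your two-step scheme (slice-wise De Giorgi--Poincar\'e shrinking followed by a DG iteration) is exactly the paper's approach, and the dependence $\xi=\xi(\widetilde C,N,p,\al_*)$ you obtain is the right one. Two small slips are worth flagging. First, the hypothesis requires $\z$ to vanish on the full parabolic boundary $\pl_{\mathcal{P}}Q_r(\bar\theta)$, so a time-independent cutoff is not admissible; you must take a space-time cutoff as in the proof of Lemma~\ref{Lm:shrink:1}, which only changes the bracket in the gradient estimate by a harmless factor. Second, your final inclusion is reversed: for $p\ge2$ the map $a\mapsto(a\om)^{2-p}$ is nonincreasing, so $a\ge\xi$ gives $\bar\theta\le\theta$, hence $Q_{\frac12\rho}(\bar\theta)\subset Q_{\frac12\rho}(\theta)$, not the other way around. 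The clean fix is to run both steps in the \emph{same} intrinsic cylinder with time scale $(2^{-j_*}\om)^{2-p}$ and then \emph{define} $\xi=2^{-j_*}$, so that $\theta=\bar\theta$ and no inclusion is needed; this is precisely how the paper organizes the two lemmas. Your use of a starting parameter also called $\xi$ that is later ``redefined'' invites exactly this kind of confusion.
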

The proof of Proposition~\ref{Prop:A:1} is a direct consequence of the following lemmas.
\begin{lemma}
For $j_*\in\nn$, let $\theta=(2^{-j_*}\om)^{2-p}$. There exists $\gm>0$ depending only on $\{\widetilde{C}, N, p,\al_*\}$, such that
\[
\Big| \Big[ \pm(\mu^{\pm}-u)\le\frac{\om}{2^{j_*}}\Big]\cap Q_{\rho}(\theta)\Big|\le\frac{\gm}{j_*^{\frac{p-1}p}}|Q_{\rho}(\theta)|.
\]
\end{lemma}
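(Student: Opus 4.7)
The plan is to treat the super-solution (minus) case; the plus case is symmetric. I would set $k_j := \mu^- + \om/2^j$ for $j = 0, 1, \ldots, j_*$, and write $A_{j,r} := [u<k_j] \cap Q_r(\theta)$. Each $k_j$ falls within the admissible range of levels in Proposition~\ref{Prop:A:1} once $\xi$ has been taken below $2^{-j_*}$. The scaling $\theta = (2^{-j_*}\om)^{2-p}$ is chosen precisely so that, since $p \ge 2$ and $j \le j_*$,
\[
\frac{(\om/2^j)^{2-p}}{\theta} = 2^{(p-2)(j_*-j)} \le 1,
\]
i.e.\ the time-length of $Q_\rho(\theta)$ is no larger than the intrinsic scale at every level $k_j$.

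With this in hand I would apply the $(u-k_j)_-$ energy estimate from Proposition~\ref{Prop:A:1} on the pair $Q_\rho(\theta) \subset Q_{2\rho}(\theta)$ with a standard cutoff $\z$ satisfying $|D\z|\le\gm/\rho$ and $|\pl_t\z^p|\le\gm/(\theta\rho^p)$; the above comparison absorbs the $\pl_t\z^p$ term into the $D\z$ term and produces the single-scale bound
\[
\iint_{Q_\rho(\theta)}|D(u-k_j)_-|^p\,\dx\dt\le\frac{\gm}{\rho^p}\Big(\frac{\om}{2^j}\Big)^p|A_{j,2\rho}|.
\]
Extending $u(\cdot,t)$ outside $E$ by a value $\ge k_0$ consistent with \eqref{Eq:level:A}$_2$, the geometric density \eqref{geometry} gives, for every $t\in(t_o-\theta\rho^p,t_o]$ and every $j\ge 1$, the slicewise lower bound
\[
|[u(\cdot,t) > k_j] \cap K_\rho(x_o)| \ge |K_\rho\setminus E| \ge \al_*|K_\rho|,
\]
so that the De Giorgi--Poincar\'e inequality \cite[Chapter~I, Lemma~2.2]{DB} applied between $k_{j+1}<k_j$, integrated in $t$ with H\"older's inequality exactly as in the proof of Lemma~\ref{Lm:shrink:1}, yields
\[
\frac{\om}{2^{j+1}}|A_{j+1,\rho}| \le \frac{\gm\rho}{\al_*}\bigg[\iint_{Q_\rho(\theta)}|D(u-k_j)_-|^p\,\dx\dt\bigg]^{\!\frac{1}{p}}\big[|A_{j,\rho}|-|A_{j+1,\rho}|\big]^{\frac{p-1}{p}}.
\]
Plugging in the energy bound and taking the power $p/(p-1)$ gives the telescoping inequality
\[
|A_{j+1,\rho}|^{\frac{p}{p-1}} \le \gm\,|A_{0,2\rho}|^{\frac{1}{p-1}}\big[|A_{j,\rho}|-|A_{j+1,\rho}|\big].
\]
Summing from $j=0$ to $j_*-1$, using monotonicity of $j\mapsto|A_{j,\rho}|$ on the left together with $|A_{0,2\rho}|\le 2^{N+p}|Q_\rho(\theta)|$ on the right, produces $j_*|A_{j_*,\rho}|^{p/(p-1)} \le \gm|Q_\rho(\theta)|^{p/(p-1)}$, whence the claim.

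The only real obstacle will be the careful extension of $u$ across the lateral boundary $S_T$ by a value compatible with \eqref{Eq:level:A}, so that the density property \eqref{geometry} genuinely converts into the required slicewise bound on $|[u>k_j]\cap K_\rho|$. Apart from this, the argument is the streamlined, non-singular counterpart of Lemma~\ref{Lm:shrink:1}, and the constant $\gm$ depends only on $\{\widetilde{C},N,p,\al_*\}$ as required.
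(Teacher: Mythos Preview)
Your proposal is correct and follows essentially the same route as the paper: treat the minus case, set $k_j=\mu^-+2^{-j}\om$, apply the energy estimate of Proposition~\ref{Prop:A:1} on the pair $Q_\rho(\theta)\subset Q_{2\rho}(\theta)$ to get the single-scale gradient bound (the factor $\theta^{-1}(\om/2^j)^{2-p}\le1$ is exactly the paper's observation), then run the slicewise De~Giorgi--Poincar\'e argument of Lemma~\ref{Lm:shrink:1} without the $m$-fold iteration. Your one flagged ``obstacle''---the extension of $u$ across $S_T$---is already built into the paper's conventions in Section~\ref{S:energy-1}, where $(u-k)_-$ is set to zero outside $E_T$; this is precisely what delivers $|[u(\cdot,t)>k_j]\cap K_\rho|\ge\al_*|K_\rho|$ from \eqref{geometry}.
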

\begin{proof}
We only treat the case of $u-\mu^-$ here as the other case is analogous.
The proof runs similar to Lemma~\ref{Lm:shrink:1}. 
We define $k_j=\mu^-+2^{-j}\om$ for $j=0,1,\cdots, j_*$, and work within $Q_{2\rho}(\theta)$.
After using a standard cutoff function $\z$ in $Q_{2\rho}(\theta)$ that equals the identity in $Q_{\rho}(\theta)$
and vanishes on $\pl_{\mathcal{P}}Q_{2\rho}(\theta)$,  the energy estimate in Proposition~\ref{Prop:A:1} gives that
\begin{equation*}
\begin{aligned}
	\iint_{Q_\rho(\theta)}|D(u-k_j)_-|^p\,\dx\dt&\le\frac{\gm}{\varrho^p}\bigg(\frac{\om}{2^j}\bigg)^p\bigg[1+\frac1{\theta}\bigg(\frac{\om}{2^j}\bigg)^{2-p}\bigg] |A_{j,2\rho}|\\
	&\le\frac{\gm}{\varrho^p}\bigg(\frac{\om}{2^j}\bigg)^p|A_{j,2\rho}|,
\end{aligned}
\end{equation*}
where $ A_{j,2\rho}:= \big[u<k_{j}\big]\cap Q_{2\varrho}(\theta)$.
The rest of the proof can be carried out as in the proof of Lemma~\ref{Lm:shrink:1} almost verbatim. 
Clearly, we do not need to iterate $m$ times here. 
\end{proof}
\begin{lemma}
For $\xi\in(0,1)$, let $\theta=(\xi \om)^{2-p}$. There exists $c_o\in(0,1)$ depending only on $\{\widetilde{C}, N, p\}$, such that
if
\[
\big| \big[ \pm(\mu^{\pm}-u)\le \xi \om \big]\cap Q_{\rho}(\theta) \big|\le c_o |Q_{\rho}(\theta)|,
\]
then
\[
\pm(\mu^{\pm}-u)\ge\tfrac12\xi \om\quad\text{ a.e. in }Q_{\frac12\rho}(\theta).  
\]
\end{lemma}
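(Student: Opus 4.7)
The plan is to carry out a standard De Giorgi iteration on nested cylinders, taking advantage of the fact that the intrinsic scaling $\theta=(\xi\om)^{2-p}$ is precisely calibrated so that the two right-hand side terms in the hypothesized energy inequality balance, and so that after normalization by the $\theta$-intrinsic volume $|Q_n|$ the resulting recurrence loses all dependence on $\xi$ and $\om$. I treat the super-solution case, as the other is analogous. Set for $n=0,1,\ldots$
\[
k_n=\mu^-+\tfrac12\xi\om+\frac{\xi\om}{2^{n+1}},\qquad \rho_n=\tfrac12\rho+\frac{\rho}{2^{n+1}},
\]
together with $\widetilde k_n=\tfrac12(k_n+k_{n+1})$, $\widetilde\rho_n=\tfrac12(\rho_n+\rho_{n+1})$, $Q_n=Q_{\rho_n}(\theta)$, $\widetilde Q_n=Q_{\widetilde\rho_n}(\theta)$, and $A_n=[u<k_n]\cap Q_n$. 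Each $k_n$ lies in $(\mu^-+\tfrac12\xi\om,\,\mu^-+\xi\om]$, so $(u-\widetilde k_n)_-\le\xi\om$ on its support, and the hypothesized energy inequality is applicable at level $k=\widetilde k_n$ on each pair $\widetilde Q_n\subset Q_n$.

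First I would take a cutoff $\z_n$ on $Q_n$ that equals one on $\widetilde Q_n$ with $|D\z_n|\le C2^n/\rho$ and $|\pl_t\z_n^p|\le C2^{pn}/(\theta\rho^p)$. Plugged into the energy inequality, the gradient term contributes $(\xi\om)^p\cdot 2^{pn}/\rho^p\cdot|A_n|$, while the time-derivative term contributes $(\xi\om)^2\cdot 2^{pn}/(\theta\rho^p)\cdot|A_n|$; the identity $\theta(\xi\om)^{p-2}=1$ makes these two comparable, yielding
\[
\essup_{t_o-\theta\widetilde\rho_n^p<t<t_o}\int_{K_{\widetilde\rho_n}(x_o)}\z_n^p(u-\widetilde k_n)_-^2\,\dx+\iint_{\widetilde Q_n}\z_n^p|D(u-\widetilde k_n)_-|^p\,\dx\dt\le\gm\frac{2^{pn}}{\rho^p}(\xi\om)^p|A_n|,
\]
with $\gm=\gm(\widetilde C,N,p)$. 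Next I would take a further cutoff $\phi$ supported in $\widetilde Q_n$ and equal to one on $Q_{n+1}$, use $(\xi\om/2^{n+3})|A_{n+1}|\le\iint(u-\widetilde k_n)_-\phi\,\dx\dt$, apply H\"older's inequality with exponent $p(N+2)/N$ on $A_n$, and invoke the parabolic Sobolev embedding \cite[Chapter~I, Proposition~3.1]{DB} combined with the above display. Straightforward bookkeeping using $|Q_n|\simeq(\xi\om)^{2-p}\rho^{N+p}$ shows that every power of $\xi\om$ and $\rho$ cancels, leaving the recurrence
\[
Y_{n+1}\le\gm C^n Y_n^{1+\frac{1}{N+2}},\qquad Y_n:=\frac{|A_n|}{|Q_n|},
\]
with $\gm=\gm(\widetilde C,N,p)$ and $C=C(N,p)$ both independent of $\xi$ and $\om$.

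The standard fast-geometric-convergence lemma \cite[Chapter~I, Lemma~4.1]{DB} then furnishes $c_o=c_o(\widetilde C,N,p)\in(0,1)$ such that $Y_0\le c_o$ forces $Y_n\to0$. The hypothesis of the Lemma is exactly $Y_0\le c_o$, and $Y_n\to0$ translates to $|[u<\mu^-+\tfrac12\xi\om]\cap Q_{\frac12\rho}(\theta)|=0$, completing the proof. The argument is routine once the energy inequality is in hand; the only point requiring care is the arithmetic verifying the exact cancellation of all powers of $\xi\om$ and $\rho$ after normalization by $|Q_n|$. This cancellation is the entire reason for introducing the intrinsic time scaling $\theta=(\xi\om)^{2-p}$, and is the main obstacle to keep track of.
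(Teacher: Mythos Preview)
Your proof is correct and follows essentially the same route as the paper's own argument: the same nested levels $k_n$ and radii $\rho_n$ from \eqref{choices:k_n}, the energy estimate of Proposition~\ref{Prop:A:1} yielding the bound $\gm 2^{pn}\rho^{-p}(\xi\om)^p|A_n|$, followed by H\"older plus the parabolic Sobolev embedding to obtain the recurrence $Y_{n+1}\le\gm C^nY_n^{1+1/(N+2)}$, and the conclusion via \cite[Chapter~I, Lemma~4.1]{DB}. Your explicit remark on the cancellation of all powers of $\xi\om$ and $\rho$ under the intrinsic scaling $\theta=(\xi\om)^{2-p}$ is accurate and is indeed the point of the construction.
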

\begin{proof}
We only treat the case of $u-\mu^-$ here as the other case is analogous.
The argument runs similar to that of Lemma~\ref{Lm:DG:1}.
We may first define the various quantities, cubes and cylinders as in \eqref{choices:k_n}.
The difference is that the energy estimate now, according to Proposition~\ref{Prop:A:1}, becomes
\begin{align*}
	\essup_{-\theta\tilde{\varrho}_n^p<t<0}
	&\int_{\widetilde{K}_n} (u-\tilde{k}_n)_-^2\,\dx
	+
	\iint_{\widetilde{Q}_n}|D(u-\tilde{k}_n)_-|^p \,\dx\dt\\
	&\qquad\le
	 \gm \frac{2^{pn}}{\varrho^p}(\xi\om)^{p}|A_n|,
\end{align*}
where $A_n=\big[u<k_n\big]\cap Q_n$.
After an application of the above energy estimate, the H\"older inequality  and the Sobolev imbedding
\cite[Chapter I, Proposition~3.1]{DB} as in Lemma~\ref{Lm:DG:1},  the recurrence of $Y_n=|A_n|/|Q_n|$ becomes
\begin{equation*}
	 Y_{n+1}
	\le\gm C^n Y_n^{1+\frac{1}{N+2}},
\end{equation*}
where $\gm$ and $C$ depends only on $\{\widetilde{C}, N, p\}$.
As in Lemma~\ref{Lm:DG:1} we may conclude.
\end{proof}

\end{document}